\DeclareSymbolFont{rsfs}{U}{rsfs}{m}{n}
\DeclareSymbolFontAlphabet{\mathscrsfs}{rsfs}
\theoremstyle{definition}
\newtheorem{Def}{Definition}[section]
\newtheorem{Rmk}[Def]{Remark}
\theoremstyle{plain}
\newtheorem{Prop}[Def]{Proposition}
\newtheorem{Thm}[Def]{Theorem}
\newtheorem{Lemma}[Def]{Lemma}
\newtheorem{Cor}[Def]{Corollary}
\newcommand{\R}{\mathbb{R}}
\newcommand{\Z}{\mathbb{Z}}
\newcommand{\N}{\mathbb{N}}
\newcommand{\PP}{\mathbb{P}}
\renewcommand{\epsilon}{\varepsilon}
\title{ Slow, ordinary and rapid points for Gaussian Wavelets Series and application to Fractional Brownian Motions }
\author{C. Esser\footnote{ Universit\'e de Li\`ege, D\'epartement de math\'ematique -- zone Polytech 1, 12 all\'ee de la D\'ecouverte, B\^at. B37, B-4000 Li\`ege. l.loosveldt@uliege.be} and L. Loosveldt\footnote{\underline{Corresponding author} at:  Université du Luxembourg, UR en Mathématiques, Maison du nombre, 6 avenue de la Fonte, L-4364, Esch-sur-Alzette, Luxembourg. laurent.loosveldt@uni.lu.}}
\begin{document}

\maketitle

\begin{abstract}
We study the Hölderian regularity of Gaussian wavelets series and show that they display, almost surely, three types of points: slow, ordinary and rapid. In particular, this fact holds for the Fractional Brownian Motion. We also show that this property is satisfied for a multifractal extension of Gaussian wavelet series. Finally, we remark that the existence of slow points is specific to these functions.
\end{abstract}
\noindent \textit{Keywords}:  Random Wavelets Series, Fractional Brownian motion, modulus of continuity, slow/ordinary/rapid points

\noindent  \textit{2020 MSC}: 42C40, 26A16, 60G15, 60G22, 60G17

\section*{Introduction}

Let $B$ denote the standard Brownian motion on $\R$. The Khinchin law
of the iterated logarithm \cite{khint24} allows to control the behavior of $B$ at a
given point, in the sense that for every $t \in \R$, it holds 
\begin{equation}\label{eq:lawitlog}
\limsup_{r \to 0} \frac{|B(t+r)-B(t)|}{\sqrt{|r| \log \log |r|^{-1}}} = \sqrt{2}
\end{equation}
on an event of probability one. As a  direct application of Fubini's
theorem, one obtains that almost surely, the set of points $t \in \R$
such that \eqref{eq:lawitlog} holds, 
called ordinary points, has full Lebesgue measure. This contrasts with the uniform
H\"older condition obtained by Paul Lévy in 1937 which states that the uniform modulus of continuity of $B$ is of larger order:
almost surely, one has
$$
\limsup_{r \to 0} \sup_{ t \in [0,1]}\frac{|B(t+r)-B(t)|}{\sqrt{|r|
    \log |r|^{-1}}} = \sqrt{2}.
$$
In particular, there exist exceptional points, called fast points,  where the law of the iterated
logarithm fails. In 1974, Oray and Taylor studied how often this
exceptional behavior holds and proved
especially that the Hausdorff dimension of the set
$$
\left\{ t \in [0,1] : \limsup_{r \to 0} \frac{|B(t+r)-B(t)|}{\sqrt{|r|
    \log |r|^{-1}}} \geq \lambda \sqrt{2} \right\}
$$ 
is given almost surely by $1-\lambda^{2}$ for every $\lambda \in
[0,1]$, see \cite{oreytaylor74}.

In the meanwhile, Kahane proposed in \cite{kahane85} an easy way to study the
regularity and irregularity properties of the Brownian
motion. Its method relies on the expansion of $B$ on $[0,1]$ in the so-called Faber-Schauder system. If $\Lambda$ is the triangular function
\[ \Lambda \, : \, x \mapsto  \left\{
\begin{array}{cl}
x &\text{ if $\frac{1}{2} \leq x < 1$} \\[2ex] 
\  1-x &\text{ if $0 \leq x < \frac{1}{2}$} \\[2ex] 
\ 0 &\text{ otherwise,}
\end{array} 
\right.   \]
and $\xi$, $\xi_{j,k}$ ($j \in \N_0$ and $k \in \{0,\ldots,2^j-1\}$) are independent
$\mathcal{N}(0,1)$ random variables then, we have
\begin{align}\label{eq:intro:bfs}
B(t)=\sum_{j=0}^{+ \infty} \sum_{k=0}^{2^{j}-1} \xi_{j,k} 2^{-j/2} \Lambda(2^jt-k)+\xi t
\end{align}
where the convergence is  almost surely uniform for $t \in [0,1]$. 
Working with this expression, Kahane recovered the law of the iterated logarithm and the estimation of the
modulus of continuity of the Brownian
motion. Furthermore, Kahane obtained the existence of a third category
of points, presenting a slower oscillation. These points, called slow
points, satisfy the condition
$$
\limsup_{r \to 0} \frac{|B(t+r)-B(t)|}{\sqrt{|r|}} < + \infty. 
$$

The law of iterated logarithm and the study of the set of fast points has naturally been studied and extended
since then for more general classes of gaussian processus such as gaussian processes with stationary increments,
see e.g. \cite{marcus68,orey72,bingham86,marcusrosen92,monradrootzen95,khoshnevisanshi2000}. In
particular, given a fractional Brownian motion $B_{h}$
 of index $h \in (0,1)$, one has almost surely 
$$
\limsup_{r \to 0} \frac{|B_{h}(t+r)-B_{h}(t)|}{r^{h}\sqrt{ \log \log |r|^{-1}}} = \sqrt{2}
$$
and
$$
\limsup_{r \to 0} \sup_{ t \in [0,1]}\frac{|B_{h}(t+r)-B_{h}(t)|}{r^{h}\sqrt{
    \log |r|^{-1}}} = \sqrt{2}.
$$

In 1999, Meyer, Sellan and Taqqu introduced a famous decomposition of
the fractional Brownian motion using a Lemarié-Meyer or sufficiently smooth Daubechies
wavelet $\psi$, which decorrelates
the high frequencies \cite{meyersellantaqqu99}. More precisely, any fractional Brownian motion $B_{h}$
of Hurst index $h \in (0,1)$ can be written as
\begin{equation}\label{eq:mbf}
B_{h}(t) = \sum_{j \in \N}\sum_{k \in \Z} 2^{-hj} \xi_{j,k} \psi_{h+1/2} (2^{j}t-k)
 + R(t)
\end{equation}
where $R$ is a smooth process, $(\xi_{j,k})_{(j,k)\in\N  \times \Z}$ is a
sequence of independent $\mathcal{N}(0,1)$ random variables, and
$\psi_{\alpha}$ is defined by
$$
\hat{\psi}_{\alpha}(\xi) = \frac{1}{|\xi|^{\alpha}}\hat{\psi}(\xi).
$$
Note that such a function leads to a biorthogonal wavelet basis, see
Section \ref{sec:prelim}. 

Motivated by the study of fractional Brownian motions
using this particular decomposition, we develop in this paper a
systematic study of the different pointwise behaviors of random wavelets series of the form
\begin{equation}\label{eq:RWS}
f_h = \sum_{j \in \N} \sum_{k \in \Z}
  \xi_{j,k} 2^{-h j} \psi(2^j \cdot -k ) 
\end{equation}
where $(\xi_{j,k})_{(j,k) \in\N \times \Z}$ denote a sequence of independent
$\mathcal{N}(0,1)$ random variables, $h \in (0,1)$ is fixed and  $\psi$ is any
compactly supported or smooth wavelet, see Section \ref{sec:prelim} for a
precise definition. Note that, even if the expression \eqref{eq:RWS}
is very similar to \eqref{eq:intro:bfs}, dealing with it requires much
more technical arguments. Indeed, due to the symmetry of the function
$\Lambda$, most of the terms vanish in the expansion  of the
increments $B(t)-B(s)$ if $s$ and $t$ are closed enough. This fact can
not be used anymore while working with an arbitrary wavelet and
compensations of different terms may occurs. 

Concerning the regularity of the function $f_h$, one can show, see Proposition \ref{prop:variance} below, that, for all $s,t \in \R$,
\[ \mathbb{E}[(f_h(t)-f_h(s))^2] \leq C |s-t|^{2h}, \]
for some deterministic constant $C>0$. Applying Kolmogorov continuity
Theorem for gaussian processes, one can deduce that almost surely, for
every $t \in \R$, one has
$$
\limsup_{r \to 0} \frac{|f_{h}(t+r)-f_{h}(t)|}{r^{h-\varepsilon}} < + \infty
$$
for every $\varepsilon >0$. 
The aim of this paper is to  characterize more precisely the pointwise behavior of such a wavelet series, in the spirit of what is known for the Brownian motion. Recently, generalized H\"older spaces has been introduced to address this kind of questions \cite{kreit18,loonic21} as well as the regularity of solutions of partial differential equations \cite{loonic20}. This article is the continuation of the work done in
\cite{ayache,ayacheesserkleyntssens,kahane85} for the Brownian motion.

As a consequence of our results, we get that almost surely, the Hölder
exponent of the random wavelet series $f_h$ is $h$ while it does not
belong to the uniform Hölder space of order $h$. Nevertheless, if $t$
is a slow point, $f_h$ belongs to the \textbf{pointwise} Hölder space
of order $h$ at $t$. One can therefore wonder if this feature is
common or if it is  specific to the functions under study in this paper. To address this question, in Section \ref{sec:genericity} we recall two commonly used notions of genericity: the prevalence and the Baire category point of view. We obtain that, in both sense, the existence of slow points is a specific property of gaussian random wavelets series.

The paper is structured as follows: in Section \ref{sec:prelim}, before stating our main result, we recall the most important tools used in the paper: discrete wavelet transform and modulus of continuity as well as some fundamental inequalities. Section \ref{sec:regularity} is devoted to exploring the regularity of the gaussian random wavelet serie \eqref{eq:RWS} by identifying three precise pointwise estimates. In Section \ref{sec:wavcoef}, we focus on the irregularity that we deduced from the asymptotic behavior of the wavelet coefficients. Then, using the obtain results of regularity and irregularity, we prove our main result concerning the existence of slow, ordinary and rapid points in Section \ref{sec:proof}. This fact is extended to multifractal gaussian random series in Section \ref{sec:multi}. Finally, Section \ref{sec:genericity}, focuses on the results of generecity for slow points. In this paper, $C$ stands for a deterministic constant not necessary the same in different lines.

\section{Preliminaries and statement of the main result}\label{sec:prelim}

In this section, we present the notions needed for the statement of the
main theorem and the first result concerning the convergence of the
wavelets series defined in~\eqref{eq:RWS}.

Let us first briefly recall some definitions and notations about wavelets
and biortogonal wavelets (for more precisions, see e.g.\
\cite{Daubechies:92,Meyer:95,Mallat:99,Cohen:92}). Under some general
assumptions, there exist two functions $\phi$ and $\psi$, called
wavelets, which generate two orthonormal bases of $L^{2}(\R)$, namely
\[
 \{\phi(\cdot-k)\}_{k\in\Z}\cup\{\psi(2^j \cdot -k ) : j \in \N,  k\in \Z\}
\]
and
\[
 \{\psi(2^j \cdot-k): j\in\Z, k\in \Z \}.
\]
Any function $f\in L^2(\R)$ can be decomposed as follows,
\[
f=\sum_{k\in \Z} C_k \phi(\cdot -k) + \sum_{j\in \N} \sum_{k\in\Z}
c_{j,k} \psi(2^j \cdot -k) = \sum_{j \in \Z}\sum_{k \in \Z} c_{j,k} \psi(2^j
\cdot-k)
\]
where
\[
c_{j,k}=2^{j}\int_{\R}f(x) \psi(2^jx-k)\, dx
\]
and
\[
 C_k=\int_{\R^n} f(x) \phi(x-k)\, dx.
\]
Let us remark that we do not choose the $L^2(\R)$ normalization for
the wavelets, but rather an $L^\infty(\R)$ normalization, which is better
fitted to the study of the H\"olderian regularity. Amoung the families
of wavelet basis that exist, we are mostly interested in two classes:
The Lemarié-Meyer wavlets for which  
$\phi$ and $\psi$ belong to the Schwartz class
$\mathcal{S}(\R)$, or Daubechies wavelets for which  $\phi$ and $\psi$ are
compactly supported functions. In both cases, the first moment of the
wavelet $\psi$ vanishes.

The setting in which we work is more general that just
orthogonal wavelet basis, so that it allows to cover the important
example supply by the fractional Brownian motion. Biorthogonal wavelet
bases are a couple of two Riesz wavelet bases of $L^{2}(\R)$ generated respectively by $\psi$ and $\widetilde{\psi}$ and such that
\[
2^{j/2} 2^{j'/2} \int_{\R} \psi(2^jx-k) \widetilde{\psi}(2^{j'}x-k') dx = \delta_{j,j'} \delta_{k,k'}.
\]
In that case, any function $f \in L^2(\R)$ can be decomposed as
\[
f = \sum_{j \in \Z} \sum_{k \in \Z} c_{j,k} \psi(2^j \cdot -k ) 
\]
where
\[
c_{j,k} = 2^j \int_{\R} f(x) \widetilde{\psi}(2^{j}x-k) dx.
\]
Wavelet basis and biorthogonal wavelet basis give a powerful tool to study the regularity and irregularity of functions or signals belonging to numerous functional spaces, see e.g. \cite{jaffardmeyer96,Meyer:95,Bastin:14,Jaffard:04,aubrybastin:10,Jaffard:04b,Clausel:11,esserkleynic,kreit18,loonic21}. In this paper, they will be used to get irregularity properties in Section \ref{sec:wavcoef}.


\medskip

Let us now present two lemmata that allow to prove the uniform
convergence on any compact set  of the series defined in \eqref{eq:RWS}, where $\psi$
comes from any wavelet basis or biorthogonal wavelet basis. The first one is very
classical and the second one gives informations about the asymptotic
behavior of a sequence of i.i.d gaussian random variables. 

\begin{Lemma}\label{unibound}
There exists a constant $C_1>0$ such that, for all $x \in \R^d$
\[\sum_{k \in \Z} \frac{1}{(1+|x-k|)^4}  \leq C_1.\]
\end{Lemma}

\begin{Lemma}\label{boundnormal} \cite{ayachebertrand,ayachetaqqu}
Let $(\xi_{j,k})_{(j,k) \in \N \times \Z}$ be a sequence of independent $\mathcal{N}(0,1)$ random variables. There are an event $\Omega^*$ of probability $1$ and a positive random variable $C_2$ of finite moment of every order such that, for all $\omega \in \Omega^*$ and $(j,k) \in \Z^2$, the inequality
\begin{align}\label{eq:boundnorm}
|\xi_{j,k}(\omega)| \leq C_2(\omega) \sqrt{\log(3+j+|k|)}
\end{align}
holds.
\end{Lemma}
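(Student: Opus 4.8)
The plan is to establish the almost-sure bound on the Gaussian coefficients via a Borel--Cantelli argument, controlling the maximum of the sequence on dyadic-type blocks. First I would fix a threshold function and, for each pair $(j,k)$, estimate the tail probability $\PP(|\xi_{j,k}| > \lambda)$ using the standard Gaussian bound $\PP(|\xi| > \lambda) \leq 2 e^{-\lambda^2/2}$ valid for $\lambda \geq 1$. Setting $\lambda = \lambda_{j,k} := c\sqrt{\log(3+j+|k|)}$ for a constant $c$ to be chosen, this gives
\[
\PP\big(|\xi_{j,k}| > \lambda_{j,k}\big) \leq 2 \exp\Big(-\tfrac{c^2}{2}\log(3+j+|k|)\Big) = 2\,(3+j+|k|)^{-c^2/2}.
\]
The key observation is that for $c$ large enough the series $\sum_{(j,k) \in \N \times \Z} (3+j+|k|)^{-c^2/2}$ converges: grouping terms by the level set $\{3+j+|k| = n\}$, each such set contains $O(n)$ pairs, so the sum is comparable to $\sum_n n \cdot n^{-c^2/2} = \sum_n n^{1 - c^2/2}$, which converges as soon as $c^2/2 > 2$, i.e.\ $c > 2$. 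Thus the summability is guaranteed for a suitable fixed $c$.

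Next I would invoke the Borel--Cantelli lemma: since $\sum_{(j,k)} \PP(|\xi_{j,k}| > \lambda_{j,k}) < +\infty$, almost surely only finitely many of the events $\{|\xi_{j,k}| > \lambda_{j,k}\}$ occur. Let $\Omega^*$ be the event on which this holds, so $\PP(\Omega^*) = 1$. For each $\omega \in \Omega^*$ there is a finite random index set beyond which the inequality $|\xi_{j,k}(\omega)| \leq c\sqrt{\log(3+j+|k|)}$ holds. To absorb the finitely many exceptional pairs and obtain a single multiplicative constant valid for \emph{all} $(j,k)$, I would define
\[
C_2(\omega) := \max\Big\{ c,\; \sup_{(j,k)} \frac{|\xi_{j,k}(\omega)|}{\sqrt{\log(3+j+|k|)}} \Big\},
\]
where on $\Omega^*$ the supremum is finite because it is attained over the finite exceptional set together with the uniformly bounded tail. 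This yields \eqref{eq:boundnorm} for every $(j,k)$.

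The remaining and most delicate point is verifying that $C_2$ has finite moments of every order, rather than merely being almost surely finite. The plan here is to estimate $\PP(C_2 > \lambda)$ for large $\lambda$ by a union bound over the tail: $\PP(C_2 > \lambda) \leq \sum_{(j,k)} \PP\big(|\xi_{j,k}| > \lambda \sqrt{\log(3+j+|k|)}\big)$, and then apply the Gaussian tail estimate once more with threshold $\lambda\sqrt{\log(3+j+|k|)}$, producing a bound of the form $\sum_{(j,k)} (3+j+|k|)^{-\lambda^2/2}$. For $\lambda$ large this behaves like $\sum_n n^{1-\lambda^2/2}$, which decays faster than any inverse polynomial and in fact super-polynomially in $\lambda$, so that $\int_0^\infty \lambda^{p-1}\,\PP(C_2 > \lambda)\,d\lambda < +\infty$ for every $p > 0$. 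The main obstacle is organizing these two union bounds cleanly so that the same level-set counting ($O(n)$ lattice points with $3+j+|k|=n$) controls both the Borel--Cantelli summability and the moment tail simultaneously, and ensuring the constants $c$ and the moment thresholds are compatible; since this is a known result cited from \cite{ayachebertrand,ayachetaqqu}, I would present the argument compactly and refer to those sources for the moment bound details.
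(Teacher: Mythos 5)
Your argument is correct and is essentially the standard proof of this lemma; the paper itself offers no proof (it cites \cite{ayachebertrand,ayachetaqqu}), and the Borel--Cantelli/tail-integration argument you give is exactly the one used in those references. The only cosmetic points are that the index set should be read as $\N\times\Z$ (the ``$\Z^2$'' in the statement is a typo in the paper) and that one should set $C_2$ equal to, say, $c$ off $\Omega^*$ so that it is everywhere finite; neither affects the substance.
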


Let $f_h$ denote the process defined in \eqref{eq:RWS}. 
For all $j \in \N$, we set
\[ f_{h,j} = \sum_{k \in \Z} \xi_{j,k} 2^{-h j} \psi(2^j \cdot -k ).\]
and remark that, if inequality \eqref{eq:boundnorm} holds then, thanks
to Lemma \ref{unibound} and the fast decay of the wavelet and its
derivative (or  using the compactness of the support of $\psi$), the sum in the right-hand side converges uniformly on any compact set, as well as the sum
\begin{align}\label{derioffj}
\sum_{k \in \Z} \xi_{j,k} 2^{(1-h) j} D\psi(2^j \cdot -k ).
\end{align}
Therefore, for all $j$, $f_{h,j}$ is continuously differentiable with
derivative $Df_{h,j}$ given by~\eqref{derioffj}. In particular, the
process $f_{h}$ is well defined and bounded on the event $\Omega^*$ of
probability~$1$. In the following, we will hence work on this event
without mentioning it explicitly. 

Using similar arguments, the following Proposition gives us a first information concerning the regularity of the function $f_h$.

\begin{Prop}\label{prop:variance}
Let $f_h$ be the random wavelet series defined in \eqref{eq:RWS}. If $\psi$ is continously differentiable, there exists a constant $C>0$ such that, for all $s,t \in \R$
\[ \mathbb{E}[(f_h(t)-f_h(s))^2] \leq C |s-t|^{2h}. \]
\end{Prop}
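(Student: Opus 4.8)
The plan is to compute the variance of the increment $f_h(t)-f_h(s)$ directly from the wavelet expansion, exploiting independence of the $\xi_{j,k}$ and splitting the sum over scales $j$ according to whether $2^j|s-t|$ is small or large.

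Let me outline the computation. Since the $\xi_{j,k}$ are independent and centred with unit variance, orthogonality in $L^2$ of the random coefficients gives
\[
\mathbb{E}[(f_h(t)-f_h(s))^2] = \sum_{j \in \N} 2^{-2hj} \sum_{k \in \Z} \big( \psi(2^j t - k) - \psi(2^j s - k) \big)^2 .
\]
I would fix $s \neq t$, set $r = |s-t|$, and let $J \in \Z$ be the integer with $2^{-J-1} < r \leq 2^{-J}$, thereby separating the scales into the \textbf{low-frequency} range $2^j r \leq 1$ (i.e.\ $j \leq J$) and the \textbf{high-frequency} range $2^j r > 1$ (i.e.\ $j > J$).

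\medskip

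\noindent\emph{Low-frequency range ($j \le J$).} Here $2^j$ is small relative to $1/r$, so I would use the mean value theorem together with the hypothesis that $\psi$ is continuously differentiable: for each fixed $j$,
\[
\big( \psi(2^j t - k) - \psi(2^j s - k) \big)^2 \leq (2^j r)^2 \sup_{|x| \le 1} \big( D\psi(2^j \theta_{j,k} - k) \big)^2
\]
for some intermediate point $\theta_{j,k}$. Summing over $k$ and using the fast decay of $D\psi$ (or its compact support) controlled by Lemma~\ref{unibound}, the inner sum $\sum_k (D\psi(\cdot - k))^2$ is bounded by a constant uniformly in the argument. This yields a bound of the form $\sum_{j \le J} 2^{-2hj} 2^{2j} r^2 = r^2 \sum_{j \le J} 2^{(2-2h)j}$; since $h<1$ the exponent $2-2h$ is positive, so this geometric sum is dominated by its top term $2^{(2-2h)J}$, giving $r^2 \cdot 2^{(2-2h)J} \asymp r^2 \cdot r^{-(2-2h)} = r^{2h}$, using $2^{-J} \asymp r$.

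\medskip

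\noindent\emph{High-frequency range ($j > J$).} Here the increment of $\psi$ cannot be small, so instead I would simply bound $(\psi(2^jt-k)-\psi(2^js-k))^2 \le 2\psi(2^jt-k)^2 + 2\psi(2^js-k)^2$ and use Lemma~\ref{unibound} again to see $\sum_k \psi(\cdot-k)^2 \le C$ uniformly. This produces a bound $\sum_{j > J} 2^{-2hj} \cdot C = C \sum_{j>J} 2^{-2hj}$, which is a convergent geometric sum (since $h>0$) dominated by its leading term $2^{-2hJ} \asymp r^{2h}$. Adding the two ranges gives $\mathbb{E}[(f_h(t)-f_h(s))^2] \le C r^{2h}$, as required.

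\medskip

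\noindent The main obstacle I anticipate is ensuring the uniform-in-argument control of the $k$-sums $\sum_k (D\psi(x-k))^2$ and $\sum_k \psi(x-k)^2$; this is exactly where Lemma~\ref{unibound} is designed to be applied, after using the decay estimate $|\psi(x)|, |D\psi(x)| \lesssim (1+|x|)^{-2}$ that holds for both the Schwartz (Lemari\'e--Meyer) and compactly supported (Daubechies) cases. One should also verify that the interchange of expectation and summation is legitimate, which follows from the uniform convergence already established on the event $\Omega^*$ together with dominated convergence; and note that the crossover scale $J$ is precisely what balances the two geometric sums so that both contribute at the scale $r^{2h}$.
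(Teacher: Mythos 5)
Your proposal is correct and follows essentially the same route as the paper: both reduce to the variance sum via independence, split the scales at $2^{-\nu}\asymp|s-t|$, use the mean value theorem plus the decay of $D\psi$ and Lemma~\ref{unibound} for the low frequencies, and the trivial bound $(a-b)^2\le 2a^2+2b^2$ for the high frequencies, ending with the same two geometric sums. The only cosmetic difference is that you apply the mean value theorem term by term (with $k$-dependent intermediate points $\theta_{j,k}$, all within $2^{-j}$ of $s$ so that Lemma~\ref{unibound} still applies uniformly), whereas the paper applies it to the whole $k$-sum $F_{j,t}$; both work.
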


\begin{proof}
From the independence of the centered random variables in $(\xi_{j,k})_{(j,k) \in \N \times \Z}$, we have
\[\mathbb{E}[(f_h(t)-f_h(s))^2] = \sum_{j \in \N} \sum_{k \in \Z} 2^{-2hj} (\psi(2^j t -k)-\psi(2^j s-k))^2. \]
Let us fix $t$ and assume that  $s$ and $\nu \in \Z$ are such that
\[ 2^{-\nu} < |t-s| \leq 2^{-\nu +1}.\] 
For all $j \leq \nu$, we set
\[ F_{j,t} \, : \, x \mapsto \sum_{k \in \Z} 2^{-2hj} (\psi(2^j t -k)-\psi(2^j x-k))^2.\]
Let us then remark that $F_{j,t}$ is continuously differentiable on any compact set. If $\psi$ is compactly supported, this is obvious. Otherwise, it comes from the fast decay of $\psi$ and Lemma \ref{unibound}. In any case, by the mean value theorem, there exist $x_1$ between $s$ and $t$ such that
\begin{align*}
|F_{j,t}(s)-F_{j,t}(t) |&= \left| (s-t)\sum_{k \in \Z} 2^{(1-2h)j} 2 (\psi(2^j t -k)-\psi(2^j x_1-k)) D \psi(2^jx_1-k) \right| \\
& \leq C \left| (s-t)\sum_{k \in \Z} 2^{(1-2h)j} (\psi(2^j t -k)-\psi(2^j x_1-k)) \right|.
\end{align*}
Applying again the mean-value theorem to the function
\[ g_j \, : \, x \mapsto \sum_{k \in \Z} 2^{(1-2h)j} \psi(2^j x -k),\]
we get
\[ |F_{j,t}(s)-F_{j,t}(t) | \leq C |s-t|^2  2^{(2-2h)j}\]
because
\[\sum_{k \in \Z} D\psi(2^j t -k) \]
can be uniformly bounded, by the fast decay and Lemma \ref{unibound} or using the compact support of $\psi$. With the same argument, we also get, for all $j>\nu$,
\begin{align*}
\left|  \sum_{k \in \Z} 2^{-2hj} (\psi(2^j t -k)-\psi(2^j s -k))^2
  \right|  & \leq   2^{-2hj} \sum_{k \in \Z}4 ((\psi(2^j t -k))^2+(\psi(2^j s -k))^2) \\
& \leq C 2^{-2hj}.
\end{align*}
Putting all of these together, we have
\begin{align*}
\mathbb{E}[(f_h(t)-f_h(s))^2] &= \sum_{j \leq \nu}  \sum_{k \in \Z} 2^{-2hj} (\psi(2^j t -k)-\psi(2^j s-k))^2 \\
&+\sum_{j > \nu}  \sum_{k \in \Z} 2^{-2hj} (\psi(2^j t -k)-\psi(2^j s-k))^2 \\
& \leq C (|s-t|^2 \sum_{j \leq \nu} 2^{(2-2h)j} +  \sum_{j > \nu}2^{-2hj} )\\
& \leq C( |s-t|^2 2^{(2-2h)\nu}+2^{-2h \nu}) \\
& \leq C |s-t|^{2h}.
\end{align*}
\end{proof}

From the last proposition and Kolmogorov continuity Theorem for
gaussian processes, we know that the sample path of $f_h$ are almost surely locally
Hölder-continuous of order $h-\varepsilon$ for every $\varepsilon >0$. Our aim in this paper is to give more precise information concerning the regularity of $f_h$. In order to state it, we recall finally that a modulus of continuity is an increasing function $\omega:\R^+ \to  \R^+$ satisfying 
 $\omega (0) = 0$  and for which there is $C>0$ such that 
 $\omega (2x) \leq C \omega (x)  $
for all $x \in \R^{+}$. 
Wavelet characterizations of regularity require the  following
additional regularity property for moduli of continuity, see
\cite{jaffardmeyer96}: A modulus of continuity $\omega$ is regular if
there is $N \geq 0 $ such that 
\begin{equation} \label{modreg}
 \begin{cases} & \displaystyle \sum_{j=J}^\infty 2^{Nj}
            \omega (2^{-j})  \leq C 2^{NJ} \omega (2^{-J}) \\ 
           &\!\! \displaystyle \sum_{j=-\infty}^J 2^{(N+1)j} \omega
            (2^{-j})  \leq C 2^{(N+1)J} \omega
                       (2^{-J})    \end{cases} 
\end{equation}
for all $J \geq 0$. 
Our main result will use three different regular moduli of continuity: 
\begin{itemize}
\item the modulus of continuity $\omega_{r}$ of the rapid points is
  defined by
$$
\omega_{r}^{(h)}(x) = |x|^{h}\sqrt{\log|x|^{-1}} 
$$ 
\item the modulus of continuity $\omega_{o}$ of the ordinary points  is
  defined by
$$
\omega_{o}^{(h)}(x) = |x|^{h}\sqrt{\log\log|x|^{-1}}
$$ 
\item the modulus of continuity $\omega_{s}$  of the slow points is
  defined by
$$
\omega_{s}^{(h)}(x) = |x|^{h}.
$$ 
\end{itemize}

\begin{Thm}\label{thm:main}
Almost surely, the random wavelets series  defined in \eqref{eq:RWS} satisfies the following property for every non-empty interval $I$ of $\R$:
\begin{itemize}
\item  For almost every $t\in I$, 
\begin{equation}\label{thm:main1a}
 \limsup_{s \to t} \dfrac{|f_h(s)-f_h(t) |}{\omega_{o}^{(h)}(|s-t|)}< + \infty 
\end{equation}
and if $\omega$ is a modulus of continuity such that $\omega =
o(\omega_{o}^{(h)})$, then
\begin{equation}\label{thm:main1b}
 \limsup_{s \to t} \dfrac{|f_h(s)-f_h(t) |}{\omega(|s-t|)}= + \infty ,
\end{equation}
Such points  are called \emph{ordinary points}.
\item There exists $t \in I$  such that
\begin{equation}\label{thm:main2a}
 \limsup_{s \to t} \dfrac{|f_h(s)-f_h(t) |}{\omega_{r}^{(h)}(|s-t|)}< + \infty 
\end{equation}
and if $\omega$ is a modulus of continuity such that $\omega =
o(\omega_{r}^{(h)})$, then
\begin{equation}\label{thm:main2b}
 \limsup_{s \to t} \dfrac{|f_h(s)-f_h(t) |}{\omega(|s-t|)}= + \infty ,
\end{equation}
Such points  are called \emph{rapid points}.

\item There exists $t \in I$  such that
\begin{equation}\label{thm:main3a}
\limsup_{s \to t} \dfrac{|f_h(s)-f_h(t) |}{\omega_{s}^{(h)}(|s-t|)}< + \infty .
\end{equation}
and if $\omega$ is a modulus of continuity such that $\omega =
o(\omega_{s}^{(h)})$, then
\begin{equation}\label{thm:main3b}
 \limsup_{s \to t} \dfrac{|f_h(s)-f_h(t) |}{\omega(|s-t|)}= + \infty .
\end{equation}
Such points  are called \emph{slow points}.
\end{itemize}
\end{Thm}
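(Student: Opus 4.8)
The plan is to prove the three regimes separately, using the wavelet-series structure to get upper bounds (regularity) and the wavelet coefficients to get lower bounds (irregularity). The key observation is that the three moduli $\omega_r, \omega_o, \omega_s$ differ only in their logarithmic correction, and the Gaussian control in Lemma~\ref{boundnormal} couples the size of the coefficient $\xi_{j,k}$ to the quantity $\sqrt{\log(3+j+|k|)}$. So the whole proof turns on how well one can find points $t$ near which the relevant coefficients are simultaneously small (for upper bounds at slow/ordinary points) or conspire to be large along a subsequence (for lower bounds at rapid points).

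\medskip\noindent\textbf{Upper bounds.}
For the regularity half of each item, I would split the increment $f_h(s)-f_h(t)$ scale by scale. Writing $\delta=|s-t|$ and choosing $\nu$ with $2^{-\nu}\simeq\delta$, I would bound the low-frequency part ($j\le\nu$) by the mean-value theorem applied to each $f_{h,j}$, using the uniform control \eqref{derioffj} on $Df_{h,j}$, and the high-frequency part ($j>\nu$) by $2^{-hj}|\xi_{j,k}|$ times the wavelet values, summed against Lemma~\ref{unibound}. Inserting the Gaussian bound $|\xi_{j,k}|\le C_2\sqrt{\log(3+j+|k|)}$ gives an estimate of the form $\delta^{h}\sqrt{\log\delta^{-1}}$ for \emph{every} $t$, which is already \eqref{thm:main2a}: thus every point is at worst a rapid point, and rapid points always have the claimed upper bound. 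To improve this to $\omega_o$ at almost every $t$ and to $\omega_s$ at special points, I would exploit that the crude $\sqrt{\log(3+j+|k|)}$ bound is wasteful: the coefficients actually indexed near a \emph{typical} point are smaller. Concretely, for the ordinary upper bound one uses a Borel--Cantelli / law-of-iterated-logarithm argument showing that for Lebesgue-a.e.\ $t$ the local maximum of $|\xi_{j,k}|$ over the bounded set of $k$ with $|2^jt-k|\le R$ grows like $\sqrt{\log\log}$ rather than $\sqrt{\log}$; for the slow upper bound one produces, by a direct probabilistic construction in each dyadic generation, a (necessarily Lebesgue-null) set of points whose neighboring coefficients stay uniformly $O(1)$, giving the bound $\delta^h$.

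\medskip\noindent\textbf{Lower bounds.}
The irregularity halves \eqref{thm:main1b}, \eqref{thm:main2b}, \eqref{thm:main3b} are statements that no \emph{smaller} modulus can bound the increments, and these I would read off the asymptotics of the wavelet coefficients, as announced for Section~\ref{sec:wavcoef}. The standard mechanism is that if the coefficient $c_{j,k}=\xi_{j,k}2^{-hj}$ attached to a dyadic cube containing (or adjacent to) $t$ has absolute value comparable to $2^{-hj}\sqrt{\log\dots}$ infinitely often, then $|f_h(s)-f_h(t)|$ cannot be $o$ of the corresponding modulus along the points $s$ realizing that wavelet's oscillation. For a.e.\ $t$ the relevant coefficients are genuinely of size $2^{-hj}\sqrt{\log\log 2^j}$ infinitely often (again iterated-logarithm behavior of a fixed Gaussian sequence), forcing \eqref{thm:main1b}; for the rapid points one selects $t$ sitting inside a nested sequence of cubes whose coefficients achieve the maximal $\sqrt{\log}$ growth, forcing \eqref{thm:main2b}; and \eqref{thm:main3b} is automatic since the H\"older exponent is exactly $h$, so no point can do better than $\omega_s^{(h)}=|x|^h$.

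\medskip\noindent\textbf{Main obstacle.}
The delicate point, and the one flagged in the introduction, is that for a general wavelet $\psi$ (as opposed to the symmetric Faber--Schauder triangle $\Lambda$) the increment $f_h(s)-f_h(t)$ does not telescope: at each scale many neighboring terms contribute and can partially cancel, so neither the upper nor the lower bounds follow from a single dominant coefficient. The hard part will therefore be the lower bounds: I must rule out systematic cancellation and exhibit, at the chosen point $t$ and a well-chosen companion $s$, a scale $j$ at which one wavelet's contribution provably dominates the sum of all the others. This is exactly where the fast decay of $\psi$ and its derivative, together with the two-sided control on the maximum of the Gaussians over a bounded index window, must be combined; constructing the nested cubes for the rapid and slow points so that the good coefficient is not swamped is the real work, and it is what distinguishes the present analysis from Kahane's for Brownian motion.
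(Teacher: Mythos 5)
Your upper-bound strategy is essentially the paper's: the crude Gaussian bound of Lemma \ref{boundnormal} gives the modulus $\omega_r^{(h)}$ uniformly in $t$ (Proposition \ref{prop:rapid}); re-centering that bound as $|\xi_{j,k}|\le C_t\sqrt{\log(3+j+|k-k_j(t)|)}$ gives $\omega_o^{(h)}$ for each fixed $t$ almost surely, hence for almost every $t$ by Fubini (Proposition \ref{prop:ordinary}); and a Kahane-type generation-by-generation selection of dyadic intervals gives the slow points (Proposition \ref{prop:slow}) --- though note the selected points do not have ``neighboring coefficients uniformly $O(1)$'' in a bounded window: the construction must control $|\xi_{j,k}|$ on a window that widens with $j$, allowing the bound to degrade slowly (like $2^{l}\mu$ for $|k-k_j(t)|\le 2^{ml}$ with $1/m<h$) so that the sum over $k$ still closes. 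The coefficient mechanism you invoke for the lower bounds is also the paper's, but the paper runs it in the contrapositive direction: pointwise regularity with modulus $\omega$ forces $|c_{j,k_j(t)}|\le C(\omega(2^{-j})+2^{-Nj})$ via the vanishing moment, the fast decay and the condition \eqref{modreg}. This sidesteps entirely the cancellation problem you flag as the ``main obstacle'': one never needs to exhibit a companion $s$ at which a single wavelet's oscillation dominates, and the route you sketch (ruling out cancellation directly) is both harder and left without details.

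The genuine gap is your treatment of \eqref{thm:main3b}. It is not ``automatic since the H\"older exponent is exactly $h$'': knowing $h_{f_h}(t)=h$ only excludes $C^{h+\epsilon}(t)$ for $\epsilon>0$ and is perfectly compatible with $\limsup_{s\to t}|f_h(s)-f_h(t)|/|s-t|^{h}=0$ (think of increments of order $|s-t|^{h}/\log|s-t|^{-1}$), in which case \eqref{thm:main3b} would fail for $\omega(x)=|x|^{h}/\sqrt{\log|x|^{-1}}=o(\omega_s^{(h)})$. What is actually needed --- and what the paper uses (Lemma \ref{lem:lowerbound1}, quoted from \cite{ayache}) --- is that almost surely, simultaneously for \emph{every} $t$, $\limsup_{j}|\xi_{j,k_j(t)}|\ge 2^{-3/2}\sqrt{\pi}>0$; this is a uniform-in-$t$ Borel--Cantelli-type statement, not a consequence of the value of the H\"older exponent, and it must hold in particular at the (random, Lebesgue-null) set of slow points produced by the construction. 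With that lemma the coefficient-decay argument yields \eqref{thm:main3b}; without it your proof of the third bullet is incomplete.
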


\begin{Rmk}
Theorem \ref{thm:main} is stated in full generality but let us already emphasize that it can be improved while considering compactly supported wavelets. Indeed, in this case, one can show the strict positiveness of the limits in \eqref{thm:main1a}, \eqref{thm:main2a} and \eqref{thm:main3a}, see Remark \ref{rem:suppcompact} below.
\end{Rmk}

Theorem \ref{thm:main} generalises the  famous result of Kahane about the different pointwise behaviors of the Brownian motion, see \cite[Theorem 3 in Chapter
16]{kahane85}. Moreover, it can be applied to the random wavelet
series in~\eqref{eq:mbf}, hence it proves the existence of the three
types of points for the fractional Brownian motion. 

\begin{Rmk}
A good tool to determine the regularity of a locally bounded function $f$ at a point $t$ is to compute its Hölder exponent $h_f(t)$. If $\alpha >0$, we say that $f$ belongs to the \textit{pointwise Hölder space} $C^\alpha(t)$ if there exists a polynomial $P_t$ of degree strictly less than $\alpha$ and a constant $C>0$ such that, for all $j \in \N$,
\[ \sup_{x \in B(x_0,2^{-j})} | f(x)-P_t(x) | \leq C 2^{-\alpha j}. \]
It is straightforward to show that, as soon as $\alpha < \beta$, $C^\beta (t) \subseteq C^\alpha(t)$. Therefore, we define the \textit{pointwise Hölder exponent} of $f$ at point $t$ by
\[ h_f(t) = \sup \{ \alpha >0 \, : \, f \in C^\alpha(t) \}.\]
Theorem \ref{thm:main} tells us in particular that, if $t$ is an ordinary or rapid point, then, for all $\varepsilon>0$, $f_h \in C^{h-\varepsilon}(t)$ and $f_h \notin C^{h}(t)$ which gives $h_{f_h}(t)=h$. On the contrary, if $t$ is a slow point, $f_h \in C^h(t)$ while, for all $\varepsilon>0$, $f_h \notin C^{h+\varepsilon}(t)$ and so, again, $h_{f_h}(t)=h$.
In fact, the finiteness of the limits \eqref{thm:main1a}, \eqref{thm:main2a} and \eqref{thm:main3a} means that the function $f$ belongs to a generalized pointwise H\"older space \cite{kreit18,loonic21} associated to the corresponding modulus of continuity. 
\end{Rmk}

The next sections are dedicated to the proof of this result. Any open
interval in $\R$ can be written as a countable union of dyadic
intervals. Then, to prove Theorem \ref{thm:main}, it is sufficient to
show that, for all dyadic interval of the form
$\lambda_{j,k}=[k2^{-j}, (k+1)2^{-j}[$ with $j \in \N, k \in \Z$,
there exist an event $\Omega_{j,k}$ of probability $1$ such that, for
all $\omega \in \Omega_{j,k}$, almost every $t \in \lambda_{j,k}$ is
ordinary and there exist $t_r \in \lambda_{j,k}$ which is rapid and
$t_s \in \lambda_{j,k}$ which is slow. For the sake of simpleness in
notation, we will only do the proofs in full details for
$\lambda_{0,0}=[0,1)$. In fact, after dilatations and translations,
our proofs hold true for any arbitrary dyadic interval. Note also that
the proofs will be
done in the case where the wavelet $\psi$ is in the Schwartz class;
it can easily be adapted and simplified if $\psi $ is compactly
supported.

\section{Regularity properties} \label{sec:regularity}

In this section, we establish inequalities \eqref{thm:main1a}, \eqref{thm:main2a} and \eqref{thm:main3a}. Concerning rapid and ordinary points, the conduct of the proof is similar. First we use Lemma \ref{boundnormal} to bound the coefficients in \eqref{eq:RWS}. Then we use the fast decay of the wavelet to measure the contribution of the coefficients associated to dyadic intervals that are far away from the point of interest $t$ in the difference $|f_{h}(s)-f_{h}(t)|$. Concerning the slow points, we take advantage of a procedure initiated by Kahane in \cite{kahane85} to identify points for which we can obtain more precise information concerning the coefficients of the ``closest'' intervals while we still use Lemma \ref{boundnormal} and the fast decay for the ``furthest'' one.

\subsection{Rapid points}

To prove the existence of rapid points, we apply Lemma \ref{boundnormal} and get an uniform modulus of continuity for the function $f_{h}$. We deal with the coefficients associated to furthest intervals thanks to the following lemma.

\begin{Lemma}\label{lemma:rapid}
There exists a constant $C_1>0$ such that, for all $j \in \N_0$ and $x \in (0,1)$,
\[ \sum_{|k| > 2^{j+1}} \frac{\sqrt{\log(3+j+|k|)}}{(1+|2^jx-k|)^5} \leq C_1\]
\end{Lemma}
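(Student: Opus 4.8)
The goal is to bound, uniformly in $j \in \N_0$ and $x \in (0,1)$, the sum $\sum_{|k| > 2^{j+1}} \frac{\sqrt{\log(3+j+|k|)}}{(1+|2^jx-k|)^5}$. The plan is to exploit the fact that the summation is restricted to indices $k$ that are \emph{far} from $2^jx$: since $x \in (0,1)$ we have $0 < 2^jx < 2^j$, so for $|k| > 2^{j+1}$ the distance $|2^jx - k|$ is at least of order $2^j$, which is large. This large gap is what will tame the logarithmic growth in the numerator, and the exponent $5$ in the denominator (one more than the exponent $4$ appearing in Lemma \ref{unibound}) is presumably chosen precisely to leave room for this.

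\medskip

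First I would split the range $|k| > 2^{j+1}$ into $k > 2^{j+1}$ and $k < -2^{j+1}$ and treat them symmetrically; I will describe $k > 2^{j+1}$. For such $k$, since $2^jx < 2^j < 2^{j+1} < k$, we have $|2^jx - k| = k - 2^jx > k - 2^j \geq k/2$, so that $1 + |2^jx-k| \geq 1 + k/2 \geq k/2$. Hence each denominator is bounded below by a multiple of $k^5$. The plan is then to absorb one power of $k$ to control the logarithm: because $\sqrt{\log(3+j+|k|)}$ grows slower than any positive power of $k$, and because $k > 2^{j+1}$ forces $\log(3+j+|k|) \leq \log(3 + |k| + |k|) \leq C\log(3+|k|) \leq C k^{1/2}$ (say), I can write
\[
\frac{\sqrt{\log(3+j+|k|)}}{(1+|2^jx-k|)^5} \leq \frac{C k^{1/4}}{(k/2)^5} \leq \frac{C}{k^{4}},
\]
uniformly in $j$ and $x$, where I used $j < \log_2 k$ on the range under consideration to eliminate the $j$-dependence inside the logarithm.

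\medskip

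Once the summand is dominated by $C/k^4$, the remaining sum $\sum_{k > 2^{j+1}} k^{-4}$ is a convergent tail of a $p$-series with $p = 4 > 1$, and in fact $\sum_{k \in \Z \setminus\{0\}} |k|^{-4}$ converges to a finite constant independent of $j$; comparing with $\int_1^\infty t^{-4}\,dt$ bounds it cleanly. Summing the two symmetric halves gives the desired uniform constant $C_1$. The only point requiring a little care — the main (mild) obstacle — is making the logarithm's $j$-dependence disappear uniformly: one must verify that on the index range $|k| > 2^{j+1}$ the bound $3 + j + |k| \leq |k|^{\theta}$ (for a suitable $\theta$, after possibly enlarging constants for small $k$) holds, so that $\sqrt{\log(3+j+|k|)}$ is genuinely dominated by a fixed small power of $|k|$ with no leftover $j$. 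Since $j \leq \log_2(|k|/2)$ whenever $|k| > 2^{j+1}$, this is immediate, and the estimate closes.
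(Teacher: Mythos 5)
Your proof is correct and follows essentially the same route as the paper: both arguments rest on the lower bound $1+|2^jx-k|\gtrsim |k|$ for $|k|>2^{j+1}$, $x\in(0,1)$, and on using $j<\log_2|k|$ to remove the $j$-dependence from the logarithm. The only cosmetic difference is that the paper spends a single factor of the denominator on the logarithm and finishes with Lemma \ref{unibound} applied to the remaining $(1+|2^jx-k|)^{-4}$, whereas you convert everything to $|k|^{-4}$ and invoke convergence of the $p$-series directly.
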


\begin{proof}
Let us fix $j \in \N_0$ and $x \in (0,1)$. As $|k| > 2^{j+1}$, obviously, $\frac{k}{2^j} \notin (-2,2)$ so let $n \notin \{0, \pm 1, - 2\}$ be such that $n2^j \leq k < (n+1)2^j$. Now, as $x \in (0,1)$, we have
\[ |2^jx-k| \geq 2^j |x-n| -1 \geq 2^j (|n|-1)-1 \geq \frac{|k|}{2}-1.\]
Thus, for all such $j,k$ and $x$, we have
\[ \frac{\sqrt{\log(3+j+|k|)}}{(1+|2^jx-k|)} \leq 2 \frac{\sqrt{\log(3+2|k|)}}{|k|}\]
and we conclude using the boundedness of the function $x \mapsto \frac{\sqrt{\log(3+2 x)}}{x}$ on $[1,+ \infty[$ and Lemma \ref{unibound}.
\end{proof}

\begin{Prop} \label{prop:rapid}
Almost surely, there exists a constant $C_1>0$ such that, for all $t,s \in (0,1)$ we have
\[ |f_h(s)-f_h(t)| \leq C_1 |t-s|^h \sqrt{\log|t-s|^{-1}}.\]
In particular,  one has almost surely
\[
 \limsup_{s \to t} \dfrac{|f_h(s)-f_h(t) |}{\omega_{r}^{(h)}(|s-t|)}< + \infty 
\]
for every $t\in (0,1)$.
\end{Prop}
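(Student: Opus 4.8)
The plan is to prove the uniform modulus of continuity estimate, from which the pointwise $\limsup$ statement follows immediately by taking $s \to t$. First I would work on the event $\Omega^*$ of Lemma \ref{boundnormal}, so that $|\xi_{j,k}| \leq C_2 \sqrt{\log(3+j+|k|)}$ for all $(j,k)$. Fix $s,t \in (0,1)$ and let $\nu \in \N_0$ be the integer scale with $2^{-\nu} < |t-s| \leq 2^{-\nu+1}$, exactly as in the proof of Proposition \ref{prop:variance}. I would then split the double sum
\[
f_h(s)-f_h(t) = \sum_{j \in \N}\sum_{k \in \Z} \xi_{j,k} 2^{-hj}\bigl(\psi(2^j s - k) - \psi(2^j t - k)\bigr)
\]
into low frequencies $j \leq \nu$ and high frequencies $j > \nu$, and further, within each scale, separate the ``close'' indices $|k| \leq 2^{j+1}$ from the ``far'' indices $|k| > 2^{j+1}$. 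The far indices are precisely what Lemma \ref{lemma:rapid} is designed to control: using the fast decay of $\psi$ (bounding $|\psi(y)| \leq C(1+|y|)^{-5}$) together with the bound on $|\xi_{j,k}|$, the contribution of the far part of each scale is uniformly bounded, and after multiplication by $2^{-hj}$ (low frequencies) or summed against the geometric factor (high frequencies) these contributions sum to something of the right order.

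For the close indices the strategy bifurcates by scale. For the high frequencies $j > \nu$ I would simply bound each increment crudely by $|\psi(2^j s-k)|+|\psi(2^j t-k)|$, pull out the logarithmic factor $\sqrt{\log(3+j+|k|)}$ which for $|k| \leq 2^{j+1}$ is of order $\sqrt{j}$, and use Lemma \ref{unibound} to sum over $k$; the resulting bound is
\[
C \sum_{j > \nu} 2^{-hj}\sqrt{j} \leq C\, 2^{-h\nu}\sqrt{\nu} \leq C\,|t-s|^h\sqrt{\log|t-s|^{-1}},
\]
where the geometric sum in $j$ is dominated by its first term and $\nu \approx \log_2|t-s|^{-1}$. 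For the low frequencies $j \leq \nu$ I would instead exploit smoothness: by the mean value theorem $|\psi(2^j s-k)-\psi(2^j t-k)| \leq 2^j|s-t|\,\sup|D\psi|$ applied along the segment, again using the fast decay of $D\psi$ and Lemma \ref{unibound} to sum over the close $k$, together with the $\sqrt{j} \lesssim \sqrt{\nu}$ factor. This yields a bound of the form
\[
C\,|s-t|\sqrt{\nu}\sum_{j \leq \nu} 2^{(1-h)j} \leq C\,|s-t|\,2^{(1-h)\nu}\sqrt{\nu} \leq C\,|t-s|^h\sqrt{\log|t-s|^{-1}},
\]
where the last step uses $|s-t| \leq 2^{-\nu+1}$ so that $|s-t|\,2^{(1-h)\nu} \leq C\,|s-t|^h$.

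Collecting the four pieces gives $|f_h(s)-f_h(t)| \leq C|t-s|^h\sqrt{\log|t-s|^{-1}}$ uniformly on $(0,1)$, which is the displayed inequality; dividing by $\omega_r^{(h)}(|s-t|) = |s-t|^h\sqrt{\log|s-t|^{-1}}$ and letting $s \to t$ gives the $\limsup$ conclusion for every fixed $t$. The main obstacle I anticipate is the careful bookkeeping of the logarithmic factor $\sqrt{\log(3+j+|k|)}$: one must verify that over the close range $|k| \leq 2^{j+1}$ it genuinely contributes only a factor comparable to $\sqrt{j}$ (and hence $\sqrt{\nu}$ at the critical scales) rather than something larger, since it is this single $\sqrt{\nu} \sim \sqrt{\log|t-s|^{-1}}$ that produces the extra logarithmic factor distinguishing the rapid modulus $\omega_r^{(h)}$ from the plain Hölder rate $|x|^h$. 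Making the interplay between this $\log$ factor and the geometric sums in $j$ precise — and confirming that both the high- and low-frequency close parts contribute at the same critical order $2^{-h\nu}\sqrt{\nu}$ — is the delicate point; the far parts are routine once Lemma \ref{lemma:rapid} is in hand.
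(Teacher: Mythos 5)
Your proposal is correct and follows essentially the same route as the paper: the same dyadic scale $\nu$, the same split into $j\leq\nu$ (mean value theorem plus fast decay of $D\psi$) and $j>\nu$ (crude bound on each $f_{h,j}$), the same near/far split in $k$ handled by Lemma \ref{lemma:rapid} and Lemma \ref{unibound}, and the same bookkeeping producing the single $\sqrt{\nu}$ factor. The only cosmetic difference is that the paper applies the mean value theorem to $f_{h,j}$ as a whole (yielding one intermediate point per scale) rather than termwise to each $\psi(2^j\cdot-k)$, which slightly streamlines the application of Lemma \ref{unibound}.
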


\begin{proof}
Let us assume that  $t,s$ and $\nu \in \N$ are such that
\[ 2^{-\nu} < |t-s| \leq 2^{-\nu +1}.\]
For all $j \leq \nu$, by the mean value theorem, there exists $x$ between $s$ and $t$ such that
\begin{align}\label{eq:demorapid1}
|f_{h,j}(t)-f_{h,j}(s)| \leq |t-s| |D f_{h,j} (x)| 
\end{align}
and, using the fast decay of $D\psi$ and \eqref{eq:boundnorm}, it
follows that
\begin{align*}
|D f_{h,j}(x)| &\leq C C_2 2^{(1-h)j} \left( \sum_{|k| \leq 2^{j+1}} \frac{\sqrt{\log(3+j+|k|)}}{(1+|2^jx-k|)^4} \right. \\ & \left. \quad \quad + \sum_{|k| > 2^{j+1}} \frac{\sqrt{\log(3+j+|k|)}}{(1+|2^jx-k|)^5} \right).
\end{align*}
We bound the second sum by Lemma \ref{lemma:rapid} while, for the first sum, we have
\[ \sum_{|k| \leq 2^{j+1}} \frac{\sqrt{\log(3+j+|k|)}}{(1+|2^jx-k|)^4} \leq C  \sum_{|k| \leq 2^{j+1}} \frac{\sqrt{j}}{(1+|2^jx-k|)^4} \leq C \sqrt{j}\]
by Lemma \ref{unibound}. Thus, we obtain
\begin{align*}
| \sum_{j \leq \nu} (f_{h,j}(t)-f_{h,j}(s))| &\leq C C_2|t-s|2^{(1-h)\nu } \sqrt{\nu} \\ &\leq C C_2(\omega)|t-s|^h \sqrt{\log|t-s|^{-1}}. 
\end{align*}
Now, if $j > \nu$, by splitting the sums in the same way, we also have
\[ |f_{h,j}(t)| \leq C C_2 2^{-h j} \sqrt{j} \text{ and } |f_{h,j}(s)| \leq C C_2 2^{-h j} \sqrt{j}\]
which obviously leads to
\[ |\sum_{j > \nu} f_{h,j}(t)| \leq C C_2|t-s|^h \sqrt{\log|t-s|^{-1}} \]
and
\[ |\sum_{j > \nu} f_{h,j}(s)| \leq C C_2|t-s|^h \sqrt{\log|t-s|^{-1}}.\]
The conclusion follows immediately.
\end{proof}



\subsection{Ordinary points}

To establish the existence of ordinary points, we need to introduce
some notations. If $j \in \N_0$, and $t \in (0,1)$, we denote by
$k_j(t)$ the unique positive integer in $\{0,\ldots,2^j-1\}$ such that
$t \in [k_j(t)2^{-j},(k_j(t)+1)2^{-j})$. We also define, for all $n \in \N$ the set
\[ \kappa_j^t(n) = \{ k \in \Z \, : \, |k-k_j(t)| \leq n \} \]
The main proof of this section consists in applying Lemma \ref{boundnormal} to a sequence of independent $\mathcal{N}(0,1)$ random variables indexed from $k_j(t)$. Then, Lemmata \ref{lemma:ordinary} and \ref{lemma:ordinary2} help us to deal with the coefficients associated with the furthest intervals.

\begin{Lemma}\label{lemma:ordinary}
There exists a constant $C_1>0$ such that, for all $j \in \N_0$, $t
\in (0,1)$ and $x \in (0,1)$ such that $|t-x| \leq 2^{-j+1}$ we have
\[ \sum_{k \notin \kappa_j^t(j)} \frac{\sqrt{\log(3+j+|k-k_j(t)|)}}{(3+|2^jx-k|)(1+|2^jx-k|)^4} \leq C_1\]
\end{Lemma}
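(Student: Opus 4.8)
The plan is to reduce the sum to one already controlled by Lemma \ref{unibound} by absorbing the logarithmic numerator into the extra factor $(3+|2^jx-k|)$ sitting in the denominator. First I would substitute $m=k-k_j(t)$, so that the summation condition $k\notin\kappa_j^t(j)$ becomes simply $|m|>j$ and the numerator reads $\sqrt{\log(3+j+|m|)}$. To locate $2^jx$ relative to $k_j(t)$, note that $t\in[k_j(t)2^{-j},(k_j(t)+1)2^{-j})$ gives $0\le 2^jt-k_j(t)<1$, while the hypothesis $|t-x|\le 2^{-j+1}$ gives $|2^jt-2^jx|\le 2$; adding these yields $|2^jx-k_j(t)|\le 3$. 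Consequently $|2^jx-k|=|(2^jx-k_j(t))-m|\ge |m|-3$, and more precisely $3+|2^jx-k|\ge\max(3,|m|)$.

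The key step, and the reason the restriction $|m|>j$ is imposed, is to neutralise the $j$-dependence of the logarithm. Since $|m|>j$ we have $3+j+|m|\le 3+2|m|$, hence $\sqrt{\log(3+j+|m|)}\le\sqrt{\log(3+2|m|)}$, which no longer sees $j$. I would then show the uniform bound
\[ \frac{\sqrt{\log(3+2|m|)}}{3+|2^jx-k|}\le C \]
as follows: when $|m|\ge 3$ the denominator is at least $|m|$ and one invokes the boundedness of $u\mapsto\sqrt{\log(3+2u)}/u$ on $[1,+\infty)$, exactly as in Lemma \ref{lemma:rapid}, whereas for $|m|<3$ the denominator is at least $3$ and the numerator is at most $\sqrt{\log 9}$. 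With this bound in hand the whole sum satisfies
\[ \sum_{k\notin\kappa_j^t(j)}\frac{\sqrt{\log(3+j+|k-k_j(t)|)}}{(3+|2^jx-k|)(1+|2^jx-k|)^4}\le C\sum_{k\in\Z}\frac{1}{(1+|2^jx-k|)^4}\le CC_1, \]
the last inequality being Lemma \ref{unibound}, which yields the claim after renaming the constant.

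I expect the only genuinely delicate point to be the bookkeeping that makes the estimate uniform in $j$: the factor $(3+|2^jx-k|)$ must absorb $\sqrt{\log(3+j+|m|)}$, and this works precisely because excluding $\kappa_j^t(j)$ forces $|m|>j$, so that the $j$ inside the logarithm is dominated by $|m|$ and the logarithm grows only like $\sqrt{\log|m|}$, which is negligible against the linear lower bound $3+|2^jx-k|\gtrsim|m|$. Had one summed over all $k$ (including $|m|\le j$), the terms near $k_j(t)$ would contribute $\sqrt{\log(3+j)}$ against an $O(1)$ denominator and the sum would diverge as $j\to\infty$; those close terms are instead handled separately via Lemma \ref{boundnormal} indexed from $k_j(t)$, as announced before the statement.
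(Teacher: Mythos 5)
Your proof is correct and follows essentially the same route as the paper: both use $|2^jx-k_j(t)|\le 3$ to get $3+|2^jx-k|\ge |k-k_j(t)|$, use $|k-k_j(t)|>j$ to replace $\sqrt{\log(3+j+|k-k_j(t)|)}$ by $\sqrt{\log(3+2|k-k_j(t)|)}$, and then conclude via the boundedness of $u\mapsto\sqrt{\log(3+2u)}/u$ on $[1,+\infty)$ together with Lemma \ref{unibound}. Your write-up is in fact slightly more detailed than the paper's (which just says "the conclusion follows just like in Lemma \ref{lemma:rapid}").
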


\begin{proof}
Let us fix $j \in \N_0$, $t \in (0,1)$ and $x \in (0,1)$ such that $|t-x| \leq 2^{-j+1}$. If  \linebreak $k \notin \kappa_j^t(j)$, we immediately have $|2^jx-k_j(t)| \leq 3$ and thus $|2^j x-k| \geq |k_j(t)-k|-3$. Therefore,
\[ \frac{\sqrt{\log(3+j+|k-k_j(t)|)}}{(3+|2^jx-k|)} \leq \frac{\sqrt{\log(3+2|k-k_j(t)|)}}{|k_j(t)-k|}\]
and the conclusion follows just like in Lemma \ref{lemma:rapid}.
\end{proof}

\begin{Lemma}\label{lemma:ordinary2}
There exists a constant $C_1>0$ such that, for all $j \in \N_0$, $t
\in (0,1)$ and $s \in (0,1)$ such that $|2^js-k_j(t)| \leq j$ we have
\[ \sum_{k \notin \kappa_j^t(2j)} \frac{\sqrt{\log(3+j+|k-k_j(t)|)}}{(1+|2^js-k|)^5} \leq C_1\]
\end{Lemma}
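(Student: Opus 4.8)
The plan is to follow exactly the template established in Lemma \ref{lemma:rapid} and Lemma \ref{lemma:ordinary}: exploit the hypothesis together with the summation condition to replace the awkward quantity $|2^js-k|$ in the denominator by a constant multiple of $|k-k_j(t)|$, and then reduce to a convergent one-dimensional series via Lemma \ref{unibound}. Concretely, I would fix $j \in \N_0$, $t \in (0,1)$ and $s \in (0,1)$ with $|2^js-k_j(t)| \leq j$, and consider an index $k \notin \kappa_j^t(2j)$, that is, $|k-k_j(t)| > 2j$. The whole point of the threshold $2j$ (rather than $j$) is that it forces $j < \frac{1}{2}|k-k_j(t)|$, so that the triangle inequality yields
\[ |2^js-k| \geq |k-k_j(t)| - |2^js-k_j(t)| \geq |k-k_j(t)| - j \geq \frac{1}{2}|k-k_j(t)|. \]

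Next I would peel off one power of the denominator and write
\[ \frac{\sqrt{\log(3+j+|k-k_j(t)|)}}{(1+|2^js-k|)^5} \leq \frac{\sqrt{\log(3+j+|k-k_j(t)|)}}{1+|2^js-k|}\cdot\frac{1}{(1+|2^js-k|)^4}. \]
For the first factor, the bound $|2^js-k| \geq \frac{1}{2}|k-k_j(t)|$ together with $j < \frac{1}{2}|k-k_j(t)|$ (which also gives $3+j+|k-k_j(t)| < 3+\frac{3}{2}|k-k_j(t)|$) produces
\[ \frac{\sqrt{\log(3+j+|k-k_j(t)|)}}{1+|2^js-k|} \leq 2\,\frac{\sqrt{\log(3+\frac{3}{2}|k-k_j(t)|)}}{|k-k_j(t)|}, \]
and the right-hand side is uniformly bounded because $x \mapsto \frac{\sqrt{\log(3+\frac{3}{2}x)}}{x}$ is bounded on $[1,+\infty[$, exactly as in Lemma \ref{lemma:rapid}. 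The surviving factor sums to $\sum_{k} (1+|2^js-k|)^{-4} \leq C_1$ by Lemma \ref{unibound}. Combining these two estimates delivers the uniform constant bound.

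I do not expect any serious obstacle here; the argument is routine once the correct lower bound on $|2^js-k|$ is in place. The only point requiring a little care is that the logarithmic numerator carries the term $j$, which is not a priori small; this is precisely why the proof must invoke the gap condition $|k-k_j(t)| > 2j$ to absorb $j$ into a constant multiple of $|k-k_j(t)|$ before comparing against the polynomial decay. After this reduction the remaining sum is exactly of the form treated by Lemma \ref{unibound}, which closes the estimate.
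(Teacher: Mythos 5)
Your argument is correct and follows essentially the same route as the paper: both use the gap condition $|k-k_j(t)|>2j$ together with $|2^js-k_j(t)|\leq j$ to get $|2^js-k|\geq \tfrac{1}{2}|k-k_j(t)|$, peel off one power of the denominator to absorb the logarithmic numerator into a bounded quantity, and finish with Lemma \ref{unibound}. No issues.
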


\begin{proof}
Let us fix $j \in \N_0$ $t \in (0,1)$ and $s \in (0,1)$ such that $|2^js-k_j(t)| \leq j$. If $k \notin \kappa_j^t(2j)$, of course, $|2^js-k_j(t)| <  \frac{|k-k_j(t)|}{2}$ and thus $|2^j s-k| \geq \frac{|k-k_j(t)|}{2}$. It follows that
\[ \frac{\sqrt{\log(3+j+|k-k_j(t)|)}}{1+|2^js-k|} \leq 2 \frac{\sqrt{\log(3 + 2|k-k_j(t)|)}}{|k-k_j(t)|}\]
and, again, we conclude just like in Lemma \ref{lemma:rapid}.
\end{proof}

\begin{Prop}\label{prop:ordinary}
Almost surely, for almost every $t \in (0,1)$, 
\[  \limsup_{s \to t} \dfrac{|f_h(s)-f_h(t) |}{\omega_{o}^{(h)}(|t-s|)}< + \infty. \]
\end{Prop}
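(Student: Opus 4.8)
The plan is to obtain, for almost every $t\in(0,1)$, a pointwise modulus of continuity of order $\omega_o^{(h)}$, by refining the argument used for rapid points (Proposition~\ref{prop:rapid}) with the crucial improvement that the logarithmic factor $\sqrt{\log|t-s|^{-1}}$ is replaced by $\sqrt{\log\log|t-s|^{-1}}$. The reason this improvement is available almost everywhere is that for a \emph{fixed} $t$ the relevant Gaussian coefficients are those indexed near $k_j(t)$, and for these one can afford a sharper bound than the uniform Lemma~\ref{boundnormal} gives. Concretely, I would first establish the probabilistic core: for almost every $t$, the coefficients $\xi_{j,k}$ with $|k-k_j(t)|\le 2j$ (say) satisfy $|\xi_{j,k}|\le C_2\sqrt{\log(3+j+|k-k_j(t)|)}+C\sqrt{\log j}$ for all large $j$, the extra $\sqrt{\log j}$ being what survives from a Borel--Cantelli/Fubini argument controlling the finitely many ``central'' coefficients at each scale. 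This is exactly where the $\log\log$ comes from: summing a geometric-type series over the scales $j\le\nu$ weighted by $2^{(1-h)j}$ and $\sqrt{\log j}$ produces $2^{(1-h)\nu}\sqrt{\log\nu}$, and since $2^{-\nu}\asymp|t-s|$ we have $\sqrt{\log\nu}\asymp\sqrt{\log\log|t-s|^{-1}}$.

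With that almost-sure event fixed, I would split the estimate of $|f_h(s)-f_h(t)|$ exactly as in Proposition~\ref{prop:rapid}, into scales $j\le\nu$ and $j>\nu$, where $2^{-\nu}<|t-s|\le 2^{-\nu+1}$, and within each scale into indices close to $k_j(t)$ and indices far from it. For the close indices (those in $\kappa_j^t(j)$ or $\kappa_j^t(2j)$) the sharp coefficient bound gives a $\sqrt{\log j}$ factor rather than $\sqrt{j}$; for the far indices the deterministic decay of $\psi$ and $D\psi$ together with Lemmata~\ref{lemma:ordinary} and~\ref{lemma:ordinary2} bound the tail by a constant, exactly as Lemma~\ref{lemma:rapid} did for the rapid case. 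For $j\le\nu$ I would apply the mean value theorem as in \eqref{eq:demorapid1} to get $|f_{h,j}(t)-f_{h,j}(s)|\le C|t-s|2^{(1-h)j}\sqrt{\log j}$ (the hypotheses $|t-x|\le 2^{-j+1}$ of Lemma~\ref{lemma:ordinary} being met since $x$ lies between $s$ and $t$ and $j\le\nu$), and sum to get $C|t-s|2^{(1-h)\nu}\sqrt{\log\nu}\le C|t-s|^h\sqrt{\log\log|t-s|^{-1}}$. For $j>\nu$ I would bound $|f_{h,j}(t)|$ and $|f_{h,j}(s)|$ individually by $C2^{-hj}\sqrt{\log j}$ (here Lemma~\ref{lemma:ordinary2}, whose hypothesis $|2^js-k_j(t)|\le j$ is ensured by $2^{-\nu}<|t-s|$ and $j>\nu$, handles the tail) and sum the geometric series, again yielding $C|t-s|^h\sqrt{\log\log|t-s|^{-1}}$.

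The main obstacle is the probabilistic first step, namely proving that almost surely, for almost every $t$, the central coefficients satisfy the improved $\sqrt{\log j}$ bound. The point is a trade-off in the union bound: at scale $j$ there are only $O(j)$ central indices $k$ (those with $|k-k_j(t)|\le 2j$), so the probability that some central coefficient exceeds $\lambda\sqrt{\log j}$ is at most $O(j)\cdot\exp(-\lambda^2\log j/2)=O(j^{1-\lambda^2/2})$, which is summable in $j$ once $\lambda^2>4$; Borel--Cantelli then removes a null set of $\omega$. To pass from ``for each $t$'' to ``for almost every $t$'' I would invoke Fubini, integrating the indicator of the bad event over $t\in(0,1)$ against the product of Lebesgue measure and $\mathbb P$, exactly the mechanism Kahane uses. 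The delicate bookkeeping is that $k_j(t)$ depends on $t$, so the ``central coefficient'' family reshuffles with $t$; the clean way around this is to fix the scale $j$ and dyadic index $k$, note that the set of $t$ for which a given $k$ is central is an interval of length $O(j2^{-j})$, and sum the resulting Fubini contributions over $k$ and $j$. Once this event is in hand, the rest is the same deterministic splitting as in the rapid case with $\sqrt{j}$ systematically replaced by $\sqrt{\log j}$.
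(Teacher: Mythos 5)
Your overall architecture is the paper's: a re-indexed coefficient bound of the form $|\xi_{j,k}|\le C_t\sqrt{\log(3+j+|k-k_j(t)|)}$ (obtained by a Borel--Cantelli argument for the coefficients near $k_j(t)$, or equivalently by applying Lemma~\ref{boundnormal} to the family re-centred at $k_j(t)$), the same split into $j\le\nu$ and $j>\nu$ and into near/far indices, Lemmata~\ref{lemma:ordinary} and~\ref{lemma:ordinary2} for the tails, and Fubini at the end. However, there is a genuine error in your treatment of $|f_{h,j}(s)|$ for $j>\nu$, which is precisely the delicate part of the argument. You claim that the hypothesis $|2^js-k_j(t)|\le j$ of Lemma~\ref{lemma:ordinary2} is ``ensured by $2^{-\nu}<|t-s|$ and $j>\nu$''. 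It is not: from $|t-s|\le 2^{-\nu+1}$ one only gets $|2^js-k_j(t)|\le 2^{j-\nu+1}+1$, and for $j-\nu$ large compared to $\log_2 j$ this exceeds $j$ (indeed the lower bound $|t-s|>2^{-\nu}$ forces $|2^js-k_j(t)|\ge 2^{j-\nu}-1>j$ eventually). So Lemma~\ref{lemma:ordinary2} is simply not applicable for those scales, and your claimed bound $|f_{h,j}(s)|\le C2^{-hj}\sqrt{\log j}$ is unjustified there: the dominant coefficients for $f_{h,j}(s)$ are those near $k_j(s)$, which sit at distance about $2^{j-\nu}$ from $k_j(t)$ and are therefore \emph{not} in your ``central'' family; your construction only controls them by the uniform bound $\sqrt{\log(3+j+|k|)}\lesssim\sqrt{j}$.

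This is not a cosmetic issue, because $\sum_{j>\nu}2^{-hj}\sqrt{j}\asymp 2^{-h\nu}\sqrt{\nu}$ would only recover the rapid modulus $|t-s|^h\sqrt{\log|t-s|^{-1}}$, destroying the $\log\log$ you are after. The repair is the case distinction the paper makes: when $2^{j-\nu+2}\le j$ Lemma~\ref{lemma:ordinary2} does apply and you get $\sqrt{\log j}$; when $j<2^{j-\nu+2}$ you must re-centre the sum at $k_j(s)$, observe that $|k_j(s)-k_j(t)|\le 2^{j-\nu+1}+2$ so that for $k\in\kappa_j^s(2^{j-\nu+2})$ the re-indexed coefficient bound gives $\sqrt{\log(4\cdot 2^{j-\nu+2})}\lesssim\sqrt{j-\nu}$, and handle $k\notin\kappa_j^s(2^{j-\nu+2})$ with one extra power of decay of $\psi$. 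This yields $|f_{h,j}(s)|\le CC_t2^{-hj}\sqrt{j-\nu}$, and the series $\sum_{j>\nu}2^{-hj}\sqrt{j-\nu}\le C2^{-h\nu}$ converges with no extra logarithm, which is what saves the $\sqrt{\log\log|t-s|^{-1}}$ bound. With that step corrected, the rest of your outline (including the Fubini bookkeeping over the $t$-dependent central family) is sound.
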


\begin{proof}
Let us fix $t \in (0,1)$, and $j \in \N_0$ and let $0 \leq k_j(t) <
2^{j}$ be such that $t \in [k_j(t)2^{-j},(k_j(t)+1)2^{-j}[$. A simple
modification of Lemma \ref{boundnormal} insure us the existence of a
positive random variable $C_t$ of finite moment of every order such
that, almost surely
\[ |\xi_{j,k}| \leq C_t \sqrt{\log(3+ j+|k-k_j(t)|)}.\]

As previously, if $s \in (0,1)$ and $\nu \in \N$ is such that
\[ 2^{-\nu} < |t-s| \leq 2^{-\nu +1},\]
we first start by considering, for all $j \leq \nu$ ,$|D f_{h,j} (x)|$ for a $x$ between $s$ and $t$. In this case, we have
\begin{align*}
|Df_{h,j} (x)| &\leq C C_t 2^{(1-h)j} \left( \sum_{k \in \kappa_j^t(j)} \frac{\sqrt{\log(3+j+|k-k_j(t)|)}}{(1+|2^jx-k|)^4} \right. \\& \left.  \quad \quad + \sum_{k \notin \kappa_j^t(j)} \frac{\sqrt{\log(3+j+|k-k_j(t)|)}}{(3+|2^jx-k|)(1+|2^jx-k|)^4} \right) 
\end{align*}
and we bound the second sum by Lemma \ref{lemma:ordinary}, while, for the first sum, we have
\[ \sum_{k \in \kappa_j^t(j)} \frac{\sqrt{\log(3+j+|k-k_j(t)|)}}{(1+|2^jx-k|)^4} \leq C \sum_{k \in \kappa_j^t(j)} \frac{\sqrt{\log(j)}}{(1+|2^jx-k|)^4} \leq C \sqrt{\log(j)} \]
by Lemma \ref{unibound}.
These inequalities lead to 
\begin{equation}\label{eq:ordinary1}
| \sum_{j \leq \nu} (f_{h,j} (t)-f_{h,j} (s)) | \leq C C_t |t-s|^h
\sqrt{\log \log|t-s|^{-1}}. 
\end{equation}

To bound $|f_{h,j} (t)|$, for all $j > \nu$, we use the same techniques and get
\[ |\sum_{j > \nu} f_{h,j} (t)| \leq C C_t |t-s|^h \sqrt{\log \log|t-s|^{-1}}. \]

The bound for $|f_{h,j} (s)|$ is a little bit more tricky. As $|2^js-k_j(t)| \leq 2^{j-\nu+2}$, we first consider the case when $2^{j- \nu+2} \leq j$, then
\begin{align*}
|f_{h,j} (s)| & \leq C C_t 2^{-h j} \left( \sum_{k \in \kappa_j^t(2j)} \frac{\sqrt{\log(3+j+|k-k_j(t)|)}}{(1+|2^js-k|)^4} \right. \\ & \left. \quad \quad + \sum_{k \notin \kappa_j^t(2j)} \frac{\sqrt{\log(3+j+|k-k_j(t)|)}}{(1+|2^js-k|)^5} \right). 
\end{align*}
The second sum is this time bounded using Lemma \ref{lemma:ordinary2} while we again use Lemma \ref{unibound} to get
\[ \sum_{k \in \kappa_j^t(2j)} \frac{\sqrt{\log(3+j+|k-k_j(t)|)}}{(1+|2^js-k|)^4} \leq C \sqrt{\log(j)}.\]
Now, if $j < 2^{j- \nu+2}$,
\begin{align*}
|f_{h,j} (s)| & \leq C C_t 2^{-h j} \left( \sum_{k \in \kappa_j^s(2^{j- \nu+2})} \frac{\sqrt{\log(3+j+|k-k_j(t)|)}}{(1+|2^js-k|)^4} \right. \\ & \left. \quad \quad + \sum_{k \notin \kappa_j^s(2^{j- \nu+2})} \frac{\sqrt{\log(3+j+|k-k_j(t)|)}}{(1+|2^js-k|)^5} \right) 
\end{align*}
and, as $|k_j(s)-k_j(t)| \leq 2^{j-\nu+1}+2$, for all $k \in \kappa_j^s(2^{j- \nu+2})$ we have
\[ \log(3+j+|k-k_j(t)|) \leq \log(3+j+|k_j(s)-k_j(t)|+|k-k_j(s)|) \leq \log(4 2^{j-\nu+2})\]
while, for all $k \notin \kappa_j^s(2^{j- \nu+2})$,
\begin{align*}
\frac{\sqrt{\log(3+j+|k-k_j(t)|)}}{(1+|2^js-k|)^5} &\leq \frac{\log(3+j+|k_j(s)-k_j(t)|+|k-k_j(s)|)}{|k-k_j(s)|} \\
& \leq \frac{\log(4|k-k_j(s)|)}{|k-k_j(s)|} \\
& \leq C
\end{align*}
which gives us
\begin{equation}\label{eq:ordinary2}
 |f_{h,j} (s)| \leq  C C_t 2^{-h j} \sqrt{j-\nu}.\end{equation}
In total, using \eqref{eq:ordinary1} and \eqref{eq:ordinary2},  we get
\begin{align*}
|\sum_{j > \nu} f_{h,j} (s)| &\leq \sum_{j > \nu} C C_t  2^{-h j} \sqrt{j-\nu} \sqrt{\log(j)} \\
& \leq  C C_t |t-s|^h \sqrt{\log \log|t-s|^{-1}}. 
\end{align*}
The conclusion comes from Fubini's Theorem.
\end{proof}

\subsection{Slow points}

\subsubsection{An iterative procedure for slow points}

The following procedure is inspired by the one initially described by Kahane in \cite{kahane85} to identify slow points for the Brownian motion. We  generalized it here by introducing an extra parameter $m \in \N$ in order to use it for any\footnote{Kahane only considered the case $h=\frac{1}{2}$ and so its construction is made with $m=3$ which is sufficient to fulfill the condition $h> \frac{1}{m}$ we will need afterward.} $h>0$. Some clarifications are also made.

If $\mu \in \N$, $\xi \sim \mathcal{N}(0,1)$ and $l\in \N_0 $, we note 
\[p_l(\mu) = \mathbb{P}(2^l \mu < |\xi| \leq 2^{l+1} \mu).\]
For all $j,l \in \N_0$ and $0 \leq k < 2^j$, we define
\[ S_{j,l}^\mu = \{ k \in \{0,\ldots,2^{j}-1 \} \, : \, 2^l
  \mu < |\xi_{j,k}| \leq 2^{l+1} \mu \}\]
and 
\[ \Lambda_{j,l}(k) = \{ k' \in \{0,\ldots,2^{j}-1 \} \, : \, |k-k'| \leq 2^{ml} \}.\]
Note that $\#\Lambda_{j,l}(k)  \leq 2^{ml+1}+1$.
For all $j\in \N_{0}$, we define a closed set from the dyadic intervals $[k2^{-j},(k+1)2^{-j}]$ for which $k$ belongs to the set
\[ I_j^\mu=\{k \in \{0,\ldots,2^{j}-1 \} \, : \, \forall l \in \N_0 , \, \Lambda_{j,l}(k) \cap S_{j,l}^\mu = \emptyset \},\]
namely, we consider
\[ F_j^\mu = \bigcup_{k \in I_j^\mu } [k2^{-j},(k+1)2^{-j}]. \]
We want to show that, almost surely, there exists $\mu \in \N$ such that
\[ S_{\text{low}}^\mu = \bigcap_{j \in \N_{0}} F_j^\mu \neq \emptyset \]
which is equivalent to the fact that, for all $J \in \N_0$,
\[ S_{\text{low},J}^\mu= \bigcap_{j \leq J} F_j^\mu \]
is non-empty, as $(S_{\text{low},J}^\mu)_{J \in \N_{0}}$ is a decreasing sequence of compact sets. In other words, if we denote by $N_J^\mu$ the number of subintervals of $S_{\text{low},J}^\mu$, we want to show that
\[\mathbb{P}( \bigcup_{\mu \in \N} \bigcap_{J \in \N_{0}} \{N_J^\mu \geq 1\}) =1. \] 

For this purpose, we will consider sufficiently large $\mu$ such that the inequality
\begin{align} \label{condition}
\sum_{l=0}^{+ \infty} (2^{ml+1}+1) (p_l(\mu) + l \sqrt{p_l(\mu)(1-p_l(\mu))} ) < \frac{1}{4}
\end{align}
holds. Moreover, if $J_1$ is fixed, let us remark that the construction of $S_{\text{low},J_1}^\mu$ needs to consider at most $\sum_{j=0}^{J_1} 2^j$ i.i.d. $\mathcal{N}(0,1)$ random variables and, by increasing $\mu$ if necessary, one can choose to remove the intervals $[0,2^{-J_1}]$ and $[1-2^{-J_1},1]$ from $S_{\text{low},J_1}^\mu$ at this step if necessary while making $\mathbb{P}(N_{J_1}^\mu \geq (\frac{3}{2})^{J_1})$ as close to $1$ as we want. This trick helps us to start our construction with a arbitrary close to $1$ ``initial value'' of probability and to make sure that, at the end, the resulting points will differ from $0$ and $1$.

\begin{Lemma}
For all $\mu \in \N$ sufficiently large such that condition
\eqref{condition} holds, the sequence $(N_J^\mu)_{J \in \N_{0}}$ of random variables satisfies the formula
\begin{align}\label{induction}
\mathbb{P}(N_{J+1}^\mu \geq (\frac{3}{2})^{J+1}) \geq
  (1-(\frac{2}{3})^J) \mathbb{P}(N_{J}^\mu \geq (\frac{3}{2})^{J}) ,
  \quad \forall J \in \N_{0}. 
\end{align}
\end{Lemma}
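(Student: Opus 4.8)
The plan is to run the argument conditionally on the $\sigma$-algebra $\mathcal{F}_J=\sigma(\xi_{j,k}:j\leq J,\,0\leq k<2^j)$, which already determines $S_{\text{low},J}^\mu$ and hence $N_J^\mu$. I work on the event $A_J=\{N_J^\mu\geq(3/2)^J\}$ and write $n=N_J^\mu$. Each of the $n$ surviving level-$J$ intervals $[k2^{-J},(k+1)2^{-J}]$ splits into its two level-$(J+1)$ children, indexed $2k$ and $2k+1$; since the parent already belongs to $\bigcap_{j\leq J}F_j^\mu$, such a child interval lies in $S_{\text{low},J+1}^\mu$ if and only if its index belongs to $I_{J+1}^\mu$. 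Thus, if $W$ denotes the number of the $2n$ children whose index is \emph{not} in $I_{J+1}^\mu$, then $N_{J+1}^\mu=2n-W$, and since $n\geq(3/2)^J$ the bound $W\leq n/2$ forces $N_{J+1}^\mu\geq 2n-\tfrac{n}{2}=\tfrac{3}{2}n\geq(3/2)^{J+1}$. Crucially, $W$ is a function of $(\xi_{J+1,k})_k$ only, which is independent of $\mathcal{F}_J$, so once the parent positions are frozen they may be treated as deterministic, and it suffices to prove that $\mathbb{P}(W>\tfrac{n}{2}\mid\mathcal{F}_J)\leq(2/3)^J$ on $A_J$.

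For the first moment I would bound, uniformly in a child index $k'$,
$$\mathbb{P}(k'\notin I_{J+1}^\mu\mid\mathcal{F}_J)\leq\sum_{l=0}^{+\infty}\#\Lambda_{J+1,l}(k')\,p_l(\mu)\leq\sum_{l=0}^{+\infty}(2^{ml+1}+1)\,p_l(\mu),$$
using a union bound over the scales $l$ and over the indices of $\Lambda_{J+1,l}(k')$. Dropping the nonnegative term $l\sqrt{p_l(\mu)(1-p_l(\mu))}$ in \eqref{condition} shows this sum is at most $\tfrac14-\beta$, where $\beta=\sum_{l}(2^{ml+1}+1)\,l\sqrt{p_l(\mu)(1-p_l(\mu))}>0$. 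Summing over the $2n$ children gives $\mathbb{E}[W\mid\mathcal{F}_J]\leq 2n(\tfrac14-\beta)=\tfrac{n}{2}-2n\beta$, so the gap between $n/2$ and the conditional mean of $W$ is at least $2n\beta$, i.e.\ of exact order $n$.

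The main obstacle is the second moment. I would control the fluctuations of $W$ by noting that a child $k'$ can fail to lie in $I_{J+1}^\mu$ only because of a ``spoiler'': some scale $l$ and some $k''\in S_{J+1,l}^\mu$ with $|k''-k'|\leq 2^{ml}$. A single spoiler at scale $l$ ruins at most $2^{ml+1}+1$ children, and the number of spoilers at scale $l$ among the relevant indices is a sum of independent Bernoulli variables of parameter $p_l(\mu)$ (the coefficients $\xi_{J+1,\cdot}$ being independent and the annuli $\{2^l\mu<|\cdot|\leq 2^{l+1}\mu\}$ disjoint), hence a binomial whose variance is governed by $p_l(\mu)(1-p_l(\mu))$. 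Assembling these scale-by-scale second moments against the weights $2^{ml+1}+1$ and the factor $l$ of \eqref{condition} should yield a bound of the form $\mathrm{Var}(W\mid\mathcal{F}_J)\leq C\,n\,\beta^2$; the delicate point is the bookkeeping of the covariances produced by overlapping neighborhoods $\Lambda_{J+1,l}(k')$, which is precisely what the $l$-weighting in \eqref{condition} is tuned to absorb.

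Finally I would close by Chebyshev's inequality: combining the mean gap with the variance estimate,
$$\mathbb{P}\!\left(W>\tfrac{n}{2}\,\middle|\,\mathcal{F}_J\right)\leq\mathbb{P}\!\left(W-\mathbb{E}[W\mid\mathcal{F}_J]>2n\beta\,\middle|\,\mathcal{F}_J\right)\leq\frac{\mathrm{Var}(W\mid\mathcal{F}_J)}{(2n\beta)^2}\leq\frac{C}{n}\leq\left(\tfrac{2}{3}\right)^J,$$
the last step using $n\geq(3/2)^J$ and enlarging $\mu$ if necessary to absorb $C$. Integrating this conditional estimate over $A_J$ then gives
$$\mathbb{P}(N_{J+1}^\mu\geq(3/2)^{J+1})\geq\mathbb{E}\!\left[\mathbf{1}_{A_J}\,\mathbb{P}(W\leq\tfrac{n}{2}\mid\mathcal{F}_J)\right]\geq(1-(2/3)^J)\,\mathbb{P}(N_J^\mu\geq(3/2)^J),$$
which is exactly \eqref{induction}.
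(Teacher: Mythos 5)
Your overall architecture coincides with the paper's: condition on the level-$\le J$ variables, note that the $n=N_J^\mu$ surviving intervals produce $2n$ candidate children, show that with conditional probability at least $1-(2/3)^J$ at most $n/2$ of them are discarded, and integrate over $\{N_J^\mu\ge (3/2)^J\}$. The deterministic reduction ($W\le n/2\Rightarrow N_{J+1}^\mu\ge \tfrac32 n$) and the final integration are fine. Where you part ways with the paper is the concentration step, and there the proposal breaks down. The paper does not apply Chebyshev to $W$ itself: it applies it separately at each scale $l$ to the spoiler count $M_l=\#(S_{J+1,l}^\mu\cap(2\mathcal{I}_J^\mu\cup 2\mathcal{I}_J^\mu+1))\sim\mathcal{B}(2n,p_l(\mu))$, allowing the deviation $2n(l+1)\sqrt{p_l(\mu)(1-p_l(\mu))}$, i.e.\ $(l+1)\sqrt{2n}$ standard deviations, so that scale $l$ fails with probability at most $(l+1)^{-2}n^{-1}$ and the failure probabilities sum to $O(n^{-1})$; on the good event one bounds $W\le\sum_l(2^{ml+1}+1)M_l$ and the \emph{whole} left-hand side of \eqref{condition} (mean term plus deviation term) is what makes this smaller than $n/2$. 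The term $l\sqrt{p_l(1-p_l)}$ in \eqref{condition} is a per-scale deviation allowance, not a gap between $\mathbb{E}[W]$ and $n/2$.

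Your reinterpretation of that term as a mean gap $2n\beta$, with the ensuing requirement $\mathrm{Var}(W\mid\mathcal{F}_J)\le C\,n\beta^2$, cannot be repaired: that variance bound is false, and increasingly so as $\mu$ grows. Indeed, each child spoils itself at scale $0$, so $q_{k'}:=\mathbb{P}(Y_{k'}=1)\ge p_0(\mu)$ while $q_{k'}\le\sum_l(2^{ml+1}+1)p_l(\mu)<\tfrac14$; moreover the indicators $Y_{k'}$ are pairwise nonnegatively correlated (for each coordinate $k$ the set of spoiling values of $\xi_{J+1,k}$ is $\bigcup_{l\ge l_0(|k-k'|)}\{x:2^l\mu<|x|\le 2^{l+1}\mu\}$, a union over an up-set of $l$ of disjoint annuli, so any two such sets intersect in the smaller one and $1-Y_{k'}$, $1-Y_{k''}$ are coordinatewise positively correlated products). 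Hence $\mathrm{Var}(W\mid\mathcal{F}_J)\ge\sum_{k'}q_{k'}(1-q_{k'})\ge\tfrac32\,n\,p_0(\mu)$. On the other hand the $l=0$ term of $\beta$ vanishes, so $\beta\le C_m\sqrt{p_1(\mu)}$, and the Gaussian tail gives $p_0(\mu)/p_1(\mu)\to+\infty$ as $\mu\to+\infty$; therefore $\mathrm{Var}(W\mid\mathcal{F}_J)/(n\beta^2)\to+\infty$ and no covariance bookkeeping can produce $\mathrm{Var}(W\mid\mathcal{F}_J)\le C n\beta^2$. Your Chebyshev step then only yields $\mathbb{P}(W>n/2\mid\mathcal{F}_J)\le\mathrm{Var}(W\mid\mathcal{F}_J)/(2n\beta)^2\gtrsim p_0(\mu)/(4n\beta^2)$, which is much larger than the required $1/n$, and enlarging $\mu$ --- the only knob you propose --- makes this ratio worse, not better. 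A direct second-moment argument on $W$ would have to use the true gap $n(\tfrac12-2\sum_l(2^{ml+1}+1)p_l(\mu))\approx n/2$ together with a genuine proof that $\mathrm{Var}(W\mid\mathcal{F}_J)=o(n)$; the scale-by-scale Chebyshev of the paper avoids this entirely.
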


\begin{proof}
For all $J$, we define\footnote{We obviously have $N_J^\mu = \# \mathcal{I}_J^\mu$.}
\[  \mathcal{I}_J^\mu = \{k \in \{0,\ldots,2^{J}-1 \} \, : \, [k2^{-J},(k+1)2^{-J}] \subseteq S_{\text{low},J}^\mu \}, \]
and remark that
$\mathcal{I}_{J+1}^\mu$ is obtained by removing from
$2\mathcal{I}_J^\mu \cup 2\mathcal{I}_{J}^\mu +1$ the elements $k \in \{0,\ldots,2^{J+1}-1\}$ such that $\Lambda_{J+1,l}(k) \cap S_{J+1,l}^\mu \neq \emptyset$ for a $l \in \N_0$. But now, if $N_J^\mu= N$, for all such a $l$,
\[ \#(S_{J+1,l}^\mu \cap (2\mathcal{I}_J^\mu \cup 2\mathcal{I}_{J}^\mu +1)) \sim \mathcal{B}(2N,p_l(\mu))  \]
so we have, by Tchebycheff's inequality, that
\[ \#(S_{J+1,l}^\mu \cap (2\mathcal{I}_J^\mu \cup 2\mathcal{I}_{J}^\mu
  +1)) \leq 2N (p_l(\mu) + (l+1) \sqrt{p_l(\mu)(1-p_l(\mu))} )\]
with probability greater than $1-(l+1)^{-2}N^{-1}$. 
Thus the number of removed $k$ is bounded by
\[ 2N \sum_{l=0}^{+ \infty} (2^{ml+1}+1) (p_l(\mu) + l \sqrt{p_l(\mu)(1-p_l(\mu))} ) \]
with probability greater than $1-N^{-1}$.
Now, with condition \eqref{condition}, we get, for all $J$,
\[ \mathbb{P}(N_{J+1}^\mu \geq \frac{3}{2}N_J | N_J^\mu =N) \geq 1-N^{-1}\]
which gives us
\begin{align*}
\mathbb{P}(N_{J+1}^\mu \geq (\frac{3}{2})^{J+1}) &\geq \mathbb{P}((N_{J+1}^\mu \geq \frac{3}{2} N_J^\mu) \cap( N_J^\mu \geq (\frac{3}{2})^J))\\
& = \sum_{N \geq (\frac{3}{2})^J} \mathbb{P}(N_{J+1}^\mu \geq \frac{3}{2}N_J | N_J^\mu =N) \mathbb{P}(N_J^\mu =N) \\
& \geq \sum_{N \geq (\frac{3}{2})^J} (1-N^{-1} )\mathbb{P}(N_J^\mu =N) \\
& \geq (1-(\frac{2}{3})^J)\sum_{N \geq (\frac{3}{2})^J} \mathbb{P}(N_J^\mu =N)\\
& = (1-(\frac{2}{3})^J) \mathbb{P}(N_{J}^\mu \geq (\frac{3}{2})^{J}).
\end{align*}
\end{proof}

\begin{Prop}\label{kahaneas}
Almost surely, there exists $\mu \in \N$ such that $(0,1) \cap S_{\text{low}}^\mu \neq \emptyset$.
\end{Prop}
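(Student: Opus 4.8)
The plan is to prove the equivalent statement $\mathbb{P}\big(\bigcup_{\mu \in \N} A_\mu\big) = 1$, where I set $A_\mu := \{(0,1) \cap S_{\text{low}}^\mu \neq \emptyset\}$. Since $A_{\mu_0} \subseteq \bigcup_{\mu} A_\mu$ for any fixed $\mu_0$, it suffices to show that for each $\delta > 0$ there exists a single value of $\mu$ with $\mathbb{P}(A_\mu) > 1 - \delta$; no zero-one law is then required. Recall, as noted before the statement, that for a fixed $\mu$ the set $S_{\text{low}}^\mu$ is nonempty exactly when $N_J^\mu \geq 1$ for every $J$, since $(S_{\text{low},J}^\mu)_J$ is a decreasing sequence of compact sets.

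First I would record the monotonicity needed for a passage to the limit: from $S_{\text{low},J+1}^\mu \subseteq S_{\text{low},J}^\mu$ we get $\{N_{J+1}^\mu \geq 1\} \subseteq \{N_J^\mu \geq 1\}$, so these events decrease to $\{S_{\text{low}}^\mu \neq \emptyset\}$. If in addition $\mu$ and the base level $J_1$ are chosen so that the two boundary intervals $[0,2^{-J_1}]$ and $[1-2^{-J_1},1]$ are discarded at step $J_1$, then $S_{\text{low}}^\mu \subseteq [2^{-J_1},1-2^{-J_1}] \subseteq (0,1)$, whence $\{S_{\text{low}}^\mu \neq \emptyset\} = A_\mu$. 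Continuity from above then yields
\[ \mathbb{P}(A_\mu) = \lim_{J \to \infty} \mathbb{P}(N_J^\mu \geq 1) \geq \limsup_{J \to \infty} \mathbb{P}\big(N_J^\mu \geq (\tfrac{3}{2})^J\big), \]
the last inequality simply because $(\tfrac{3}{2})^J \geq 1$.

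Next I would iterate the recursion \eqref{induction}. Fixing $\mu$ large enough for condition \eqref{condition} to hold and starting from the base level $J_1$, a telescoping product gives, for every $J > J_1$,
\[ \mathbb{P}\big(N_J^\mu \geq (\tfrac{3}{2})^J\big) \geq \mathbb{P}\big(N_{J_1}^\mu \geq (\tfrac{3}{2})^{J_1}\big) \prod_{j=J_1}^{J-1}\big(1 - (\tfrac{2}{3})^j\big). \]
Because $\sum_j (\tfrac{2}{3})^j < +\infty$, the infinite product $P(J_1) := \prod_{j \geq J_1}\big(1 - (\tfrac{2}{3})^j\big)$ converges to a strictly positive limit, and $P(J_1) \to 1$ as $J_1 \to +\infty$. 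Combining this with the previous display,
\[ \mathbb{P}(A_\mu) \geq \mathbb{P}\big(N_{J_1}^\mu \geq (\tfrac{3}{2})^{J_1}\big)\, P(J_1). \]

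Finally, to drive the right-hand side close to $1$, given $\delta > 0$ I would first choose $J_1$ so large that $P(J_1) > 1 - \delta/2$, and then invoke the initial-value trick recalled before the lemma: for this fixed $J_1$ only the finitely many variables $\xi_{j,k}$ with $j \leq J_1$ enter $N_{J_1}^\mu$, so increasing $\mu$ (while retaining \eqref{condition}) pushes $\mathbb{P}\big(N_{J_1}^\mu \geq (\tfrac{3}{2})^{J_1}\big)$ above $1 - \delta/2$, with the boundary intervals removed at the same time. This gives $\mathbb{P}(A_\mu) > (1-\delta/2)^2 > 1 - \delta$, and letting $\delta \to 0$ finishes the proof. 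The main point to handle with care is the compatibility of the two limiting regimes: the base probability and the tail product are governed by different parameters ($\mu$ and $J_1$), so one must fix $J_1$ first and then take $\mu = \mu(J_1)$ large enough that condition \eqref{condition}, the closeness of $P(J_1)$ to $1$, and the closeness of the base probability to $1$ hold simultaneously. The other delicate verification is that the deliberate removal of the boundary intervals at step $J_1$ does not invalidate \eqref{induction} for $J \geq J_1$.
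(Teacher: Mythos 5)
Your proposal is correct and follows essentially the same route as the paper: iterate the recursion \eqref{induction} from a base level $J_1$, use the convergence of the infinite product $\prod_j(1-(\tfrac{2}{3})^j)$ and the fact that its tail tends to $1$, push the base probability $\mathbb{P}(N_{J_1}^\mu \geq (\tfrac32)^{J_1})$ close to $1$ by enlarging $\mu$ (with the boundary intervals removed), and conclude by continuity from above for the decreasing events $\{N_J^\mu \geq 1\}$. Your explicit remark that $J_1$ must be fixed before choosing $\mu=\mu(J_1)$, and that discarding the boundary intervals is what turns $\{S_{\text{low}}^\mu \neq \emptyset\}$ into the event $(0,1)\cap S_{\text{low}}^\mu \neq \emptyset$, only makes explicit what the paper leaves implicit.
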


\begin{proof}
From formula \eqref{induction}, if $\mu$ is large enough we deduce by induction that for all $J_1, J$ with $J_1 \leq J$,
\begin{align*}
\mathbb{P}(N_{J}^\mu \geq 1) &\geq \mathbb{P}(N_{J}^\mu \geq (\frac{3}{2})^{J}) \\
& \geq \mathbb{P}(N_{J_1}^\mu \geq (\frac{3}{2})^{J_1}) \left( \prod_{j=J_1}^J (1-(\frac{2}{3})^j) \right).
\end{align*}
Let us remark that, as $\sum_{j=1}^\infty(\frac{2}{3})^j < \infty$, the infinite product $\prod_{j=1}^\infty (1-(\frac{2}{3})^j)$ converges to a non-zero limit and 
\[ \lim_{J_1 \to + \infty} \prod_{j=J_1}^{+ \infty} (1-(\frac{2}{3})^j)=1.\]
Now, for all $0 < \varepsilon < \frac{1}{2}$, one can choose $J_1$ such that
\[ \prod_{j=J_1}^{+ \infty} (1-(\frac{2}{3})^j) > 1-\varepsilon\]
and, by increasing $\mu$ if necessary, we can choose to remove the intervals $[0,2^{-J_1}]$ and $[1-2^{-J_1},1]$ from $S_{\text{low},J_1}^\mu$, if necessary and assume
\[ \mathbb{P}(N_{J_1}^\mu \geq (\frac{3}{2})^{J_1}) > 1- \varepsilon.\]
Therefore, as the sequence of events $(\{N_J^\mu \geq 1\})_{J\in \N_{0}}$ is decreasing, we get
\begin{align*}
\mathbb{P}\big(\bigcap_{J \in \N_{0}} (N_J^\mu \geq 1)\big) \geq \mathbb{P}(N_{J_1}^\mu \geq (\frac{3}{2})^{J_1}) \left( \prod_{j=J_1}^\infty (1-(\frac{2}{3})^j) \right) >(1-\varepsilon)^2.
\end{align*}
In total, we showed that, for all $0 < \varepsilon < \frac{1}{2}$,
\[\mathbb{P}\big( \bigcup_{\mu\in \N} \bigcap_{J \in\N_{0} } (N_J^\mu \geq 1)\big) >(1-\varepsilon)^2 \] 
and the conclusion follows immediately.
\end{proof}

\subsubsection{Existence of slow points}

Now, using this iterative procedure, we establish the existence of points satisfying inequality \eqref{thm:main3a}. In this section, it will be convenient to write
\begin{align}\label{split}
 f_{h,j}= \sum_{n \in \Z} f_{h,j}^{[n]}
 \end{align}
where, for all $n \in \Z$,
\[  f_{h,j}^{[n]}=\sum_{k= n2^j}^{(n+1)2^j-1} \xi_{j,k} 2^{-h j} \psi(2^j \cdot -k ) \]
is the partial sum involving the $k$ for which the corresponding dyadic interval \linebreak $[k2^{-j},(k+1)2^{-j}[$ is included in $[n,n+1[$.

\begin{Prop}\label{prop:slow}
Almost surely, there exists $t \in (0,1)$ such that
\[  \limsup_{s \to t} \dfrac{|f_h(s)-f_h(t) |}{\omega_{s}^{(h)}(|t-s|)}< + \infty . \]
\end{Prop}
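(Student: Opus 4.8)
The plan is to feed the point produced by Proposition \ref{kahaneas} into the same two-regime estimate used for rapid and ordinary points, after first converting the membership $t\in S_{\text{low}}^\mu$ into a quantitative bound on the wavelet coefficients surrounding $t$. Fix $\omega\in\Omega^*$ on which Proposition \ref{kahaneas} holds, and choose $\mu\in\N$ and $t\in(0,1)\cap S_{\text{low}}^\mu$. Since $t\in F_j^\mu$ for every $j$, up to the harmless dyadic endpoint ambiguity we have $k_j(t)\in I_j^\mu$, that is $\Lambda_{j,l}(k_j(t))\cap S_{j,l}^\mu=\emptyset$ for all $l\in\N_0$. Unwinding these definitions, I would prove that for every $j\in\N_0$ and every $k\in\{0,\dots,2^j-1\}$,
\begin{equation}\label{eq:slowcoef}
|\xi_{j,k}|\leq 2\mu\,(1+|k-k_j(t)|)^{1/m}.
\end{equation}
Indeed, writing $d=|k-k_j(t)|$ and letting $l_0$ be the least integer with $l_0\geq\max(0,m^{-1}\log_2 d)$, one has $k\in\Lambda_{j,l}(k_j(t))$ for every $l\geq l_0$, so $|\xi_{j,k}|$ avoids every interval $(2^l\mu,2^{l+1}\mu]$ with $l\geq l_0$; their union being $(2^{l_0}\mu,+\infty)$, we get $|\xi_{j,k}|\leq 2^{l_0}\mu\leq 2\mu(1+d)^{1/m}$. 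This is the slow-point analogue of the bound $|\xi_{j,k}|\leq C_2\sqrt{\log(3+j+|k|)}$ from Lemma \ref{boundnormal}, with the decisive gain that \emph{near} $t$ (take $l=0$) the coefficients are bounded by a constant rather than by a power of a logarithm.

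With \eqref{eq:slowcoef} in hand I would follow the scheme of Propositions \ref{prop:rapid} and \ref{prop:ordinary}. Fix $s\in(0,1)$, let $\nu$ satisfy $2^{-\nu}<|t-s|\leq 2^{-\nu+1}$, and split
\[
|f_h(s)-f_h(t)|\leq\sum_{j\leq\nu}|f_{h,j}(t)-f_{h,j}(s)|+\sum_{j>\nu}\big(|f_{h,j}(t)|+|f_{h,j}(s)|\big).
\]
In each $f_{h,j}$ I use the decomposition \eqref{split}: the central block $f_{h,j}^{[0]}$, carrying the coefficients $k\in\{0,\dots,2^j-1\}$, is estimated via \eqref{eq:slowcoef} together with Lemma \ref{unibound}, exploiting that $1/m<4$ makes $\sum_k (1+|k-k_j(t)|)^{1/m}(1+|2^jx-k|)^{-4}$ converge to a constant whenever $2^jx$ stays at bounded distance from $k_j(t)$; the outer blocks $f_{h,j}^{[n]}$ with $n\neq0$ involve coefficients not constrained by the slow condition and are treated exactly as in Lemma \ref{lemma:rapid}, combining Lemma \ref{boundnormal} with the fast decay of $\psi$ and $D\psi$ to produce a uniformly bounded contribution.

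Carrying out the estimates, the slow bound \eqref{eq:slowcoef} replaces the factor $\sqrt{j}$ of the rapid case (or $\sqrt{\log j}$ of the ordinary case) by a constant. For $j\leq\nu$ the mean value theorem gives $|f_{h,j}(t)-f_{h,j}(s)|\leq|t-s|\,|Df_{h,j}(x)|\leq C\mu|t-s|2^{(1-h)j}$, whence $\sum_{j\leq\nu}|f_{h,j}(t)-f_{h,j}(s)|\leq C\mu|t-s|2^{(1-h)\nu}\leq C\mu|t-s|^h$. For $j>\nu$, since $|2^jt-k_j(t)|<1$, \eqref{eq:slowcoef} yields $|f_{h,j}(t)|\leq C\mu 2^{-hj}$ and hence $\sum_{j>\nu}|f_{h,j}(t)|\leq C\mu 2^{-h\nu}\leq C\mu|t-s|^h$. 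The delicate term is $\sum_{j>\nu}|f_{h,j}(s)|$: at scale $j>\nu$ the point $s$ need not sit in the interval of $k_j(t)$, and $|k_j(s)-k_j(t)|$ may be as large as $\sim 2^{j-\nu}$, so \eqref{eq:slowcoef} only gives $|\xi_{j,k}|\leq C\mu 2^{(j-\nu)/m}$ for the coefficients governing $f_{h,j}(s)$; consequently $|f_{h,j}(s)|\leq C\mu 2^{-hj}2^{(j-\nu)/m}$ and
\[
\sum_{j>\nu}|f_{h,j}(s)|\leq C\mu\,2^{-\nu/m}\sum_{j>\nu}2^{(1/m-h)j}\leq C\mu\,2^{-h\nu}\leq C\mu|t-s|^h,
\]
where the geometric series converges precisely because $h>1/m$, the condition guaranteed by the choice of $m$ in the construction. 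Summing the three contributions gives $|f_h(s)-f_h(t)|\leq C\mu|t-s|^h$ for $s$ near $t$, which is the claim. The main obstacle is exactly this last term: the fine-scale behaviour of $f_h$ at $s$ is driven by coefficients combinatorially far (up to $2^{j-\nu}$) from $k_j(t)$, where the slow bound has degraded to $2^{(j-\nu)/m}$, and only $h>1/m$ restores summability at the correct order $2^{-h\nu}$. A secondary technical point, dispatched as in Lemma \ref{lemma:rapid}, is verifying that the unconstrained outer blocks contribute only $O(|t-s|^h)$.
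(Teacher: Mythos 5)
Your proposal is correct and follows essentially the same route as the paper: Kahane's procedure via Proposition \ref{kahaneas}, the mean value theorem for scales $j\leq\nu$, the block decomposition \eqref{split} with Lemma \ref{boundnormal} handling the outer blocks, and the condition $1/m<h$ rescuing the summation of $\sum_{j>\nu}|f_{h,j}(s)|$. The only (harmless) difference is presentational: you condense the paper's layered estimates over the annuli $\Lambda_j^l(t)$, where $|\xi_{j,k}|\leq 2^l\mu$, into the single pointwise bound $|\xi_{j,k}|\leq 2\mu(1+|k-k_j(t)|)^{1/m}$, which carries exactly the same information.
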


\begin{proof}
Let us fix $m \in \N_{0}$ such that $h \geq 1/m$.  The iterative procedure
with $m$ 
gives 
that almost surely, $(0,1) \cap S_{\text{low}}^\mu \neq \emptyset$. 
Now let $t \in (0,1) \cap S_{\text{low}}^\mu$. There exists $r>0$ such that $[t-r,t+r] \subset (0,1)$ and so, let us take $s \in [t-r,t+r]$ and $\nu \in \N$ be such that
\[ 2^{-\nu} < |t-s| \leq 2^{-\nu +1}.\]
We are going to estimate from above $|f_h(t)-f_h(s)|$. 

For all $j \leq \nu$, once again, by the Mean Value Theorem, there exists $x$ between $s$ and $t$ such that
\[ |f_{h,j}(t)-f_{h,j}(s)| \leq |s-t| |D f_{h,j} (x)|. \]
To bound $|D f_{h,j} (x)|$, we will use the decomposition \eqref{split}. For $n=0$, if $0 \leq l \leq \lceil j/m \rceil$, $0 \leq k < 2^j$ and $|k_j(t)-k| \leq 2^{ml}$, then necessarily $|\xi_{j,k}| \leq 2^{l} \mu$. Thus, if we set
\[ \Lambda_{j}^0(t) = \{ 0  \leq k < 2^j \, : \, |k_j(t)-k| \leq 1 \} \]
and, for all $1 \leq l \leq \lceil j/m \rceil$
\[ \Lambda_{j}^l(t) = \{ 0  \leq k < 2^j \, : \, 2^{m(l-1)}<|k_j(t)-k| \leq 2^{ml} \}, \]
we have, using the fast decay of $D \psi$,
\begin{align}
|Df_{h,j}^{[0]}(x)| & \leq  2^{(1-h) j} \sum_{l=0}^{\lceil j/m \rceil} \sum_{k \in  \Lambda^l(t)} |\xi_{j,k}|  |D\psi(2^j x -k )| \label{eq:slow}\\
& \leq   C 2^{(1-h) j} \sum_{l=0}^{\lceil j/m \rceil} \sum_{k \in
  \Lambda_{j}^l(t)} 2^l \mu \frac{1}{(3+|2^j x-k|)(1+|2^j x-k|)^4}  .
\end{align}
As, for $l \geq 1$ and $k \in \Lambda_{j}^l(t)$, $|2^j x- k| \geq |k_j(t)-k|-3 > 2^{m(l-1)}-3 $, we obtain
\begin{align*}
|Df_{h,j}^{[0]}(x)| & \leq   C \mu 2^{(1-h) j} \sum_{k=0}^{2^j-1} \frac{1}{(1+|2^j x-k|)^4}.
\end{align*}
Now, there exists\footnote{We can make this constant only dependant of $t$. Note that, as $t$ is random, $C_t$ is a random variable as well.} $C_t>0$, such that, for all $ n\neq 0$ and $n2^j \leq k <(n+1)2^j-1$, we have $|2^j x- k| \geq C_t |n| 2^j$, which gives
\begin{align*}
|Df_{h,j}^{[n]}(x)| & \leq C C_2 2^{(1-h) j} \sum_{k= n2^j}^{(n+1)2^j-1} \frac{1}{(1+|2^j x-k|)^4} \frac{\sqrt{\log(3+j+|k|)}}{C_t |n| 2^j} \\
& \leq C C_2 2^{(1-h) j} \sum_{k= n2^j}^{(n+1)2^j-1} \frac{1}{(1+|2^j x-k|)^4}.
\end{align*}
Let us define the random variable $C_\mu=\max(C_2,\mu)$. By Lemma \ref{unibound}, we obtain, for all $j \leq \nu$,
\begin{align*}
|f_{h,j}(t)-f_{h,j}(s)| &\leq C_\mu 2^{(1-h) j}|s-t|
\end{align*}
and thus
\begin{align}\label{eq:slow1}
|\sum_{j \leq \nu}(f_{h,j}(t)-f_{h,j}(s))| 
& \leq C C_\mu 2^{(1-h) \nu}|s-t| \nonumber \\
& \leq C C_{\mu}^* |s-t|^h.
\end{align}

Now, we consider the terms for $j > \nu$ and we will bound separately $|f_{h,j}(t)|$ and $|f_{h,j}(s)|$. For $|f_{h,j}(t)|$, we just have to repeat the same procedure, using the set $\Lambda_j^l(t)$ to estimate $|f_{h,j}^{[0]}(t)|$ and Lemma \ref{boundnormal} for $|f_{h,j}^{[n]}(t)|$ with $n \neq 0$. We then conclude that
\begin{align*}
|\sum_{j > \nu} f_{h,j}(t)| & \leq C \sum_{j > \nu} C_\mu  2^{-h j} \\
& \leq C C_\mu  2^{-h \nu} \\
& \leq C C_\mu |t-s|^h \\
\end{align*}
For $|f_{h,j}(s)|$, the strategy remains the same: nothing changes to bound $|f_{h,j}^{[n]}(s)|$ with $n \neq 0$ while, for $n=0$, if $l$ is the greatest integer such that $|s-t| \geq 2^{ml} 2^{-j}$, the construction insures that, for all $1 \leq l' \leq \lceil j/m \rceil$ and $k \in \Lambda_j^{l'}(s)$,
\[ | \xi_{j,k} | \leq 2^l 2^{l'} \mu\]
and we get
\begin{align*}
 |f^{[0]}_{h,j}(s)|  \leq  C 2^l \mu 2^{-h j}  \sum_{k=0}^{2^j-1} \frac{1}{(1+|2^j s-k|)^4}.
\end{align*}
But as $|t-s| \leq 2^{-\nu +1}$, we have $l \leq \frac{1}{m}(j+1-\nu)$ and thus
\begin{align*}
2^l \mu 2^{-h j} \leq 2^\frac{1}{m} \mu 2^{(\frac{1}{m}-h)(j-\nu)} 2^{-h\nu}.
\end{align*}
It gives, as we took $\frac{1}{m}< h$, combined with Lemma \ref{unibound},
\begin{align}\label{eq:slow2}
|\sum_{j > \nu} f_{h,j}(s)| \leq C C_\mu|t-s|^h.
\end{align}
Finally, combining \eqref{eq:slow1} with \eqref{eq:slow2} allows to obtain 
\begin{align*}
|f_h(t)-f_h(s)| \leq C C_\mu |t-s|^h
\end{align*}
as desired. 
\end{proof}

\section{Asymptotic behavior of wavelet coefficients} \label{sec:wavcoef}

In this section we study the asymptotic behavior of the
wavelet coefficients of the Gaussian wavelet
series. It will allow to get irregularity properties for the random
wavelets series $f_h$, i.e. to prove that the three corrections obtained in the previous
section characterize exactly three possible pointwise behaviors. 
Let us start by recalling the following result from \cite[Lemma A.27]{ayache}. 

\begin{Lemma}\label{lem:lowerbound1}\cite{ayache}
Almost surely, for every $t \in \R$, one has
$$
\limsup_{j \to + \infty} |\xi_{j, k_{j}(t)} |
\geq 2^{-3/2} \sqrt{\pi}. 
$$
\end{Lemma}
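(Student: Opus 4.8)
The plan is to show that the \emph{exceptional set}
\[
\mathcal{E} = \Big\{ t \in \R : \limsup_{j \to +\infty} |\xi_{j,k_j(t)}| < 2^{-3/2}\sqrt{\pi} \Big\}
\]
is almost surely empty, which is exactly the negation of the stated inequality holding for every $t$. First I would reduce to a countable problem. Writing $c = 2^{-3/2}\sqrt{\pi}$, a point $t$ lies in $\mathcal{E}$ precisely when there is a rational $\lambda < c$ and an integer $J$ with $|\xi_{j,k_j(t)}| \leq \lambda$ for all $j \geq J$; hence $\mathcal{E} = \bigcup_{\lambda \in \Q \cap (0,c)} \bigcup_{J \in \N} E_{\lambda,J}$, where $E_{\lambda,J} = \{ t : |\xi_{j,k_j(t)}| \leq \lambda \ \forall j \geq J\}$. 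Since this is a countable union, it suffices to prove that each $E_{\lambda,J}$ is almost surely empty; decomposing $\R = \bigcup_{n \in \Z}[n,n+1)$ and using a further countable union, it is enough to treat $E_{\lambda,J} \cap [n,n+1)$ for fixed $n$, and by translation I may take $n = 0$.

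Second, I would exploit the dyadic tree structure. Call a pair $(j,k)$ \emph{small} if $|\xi_{j,k}| \leq \lambda$. For $t \in [0,1)$ the integers $k_J(t), k_{J+1}(t), \dots$ form a branch of the binary tree rooted at level $J$, with $k_{j+1}(t) \in \{2k_j(t), 2k_j(t)+1\}$, so $E_{\lambda,J} \cap [0,1) \neq \emptyset$ forces the existence of an infinite branch all of whose nodes are small. I would control this by a first moment. For $L \in \N$, the number of length-$L$ branches emanating from level $J$ inside $[0,1)$ is $2^J 2^L$, each branch consists of $L+1$ distinct nodes, and by independence of the $\xi_{j,k}$ the probability that all of them are small is $p^{L+1}$, where $p = \mathbb{P}(|\xi| \leq \lambda)$ for $\xi \sim \mathcal{N}(0,1)$. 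Hence the expected number of small branches of length $L$ is $2^J p (2p)^L$, so that
\[
\mathbb{P}\big(E_{\lambda,J}\cap[0,1) \neq \emptyset\big) \leq 2^J p (2p)^L ,
\]
which tends to $0$ as $L \to \infty$ whenever $2p < 1$; letting $L \to \infty$ forces $\mathbb{P}(E_{\lambda,J}\cap[0,1)\neq\emptyset) = 0$.

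Finally, the value of the constant is pinned down by a crude Gaussian density bound: for every $\lambda$,
\[
\mathbb{P}(|\xi| \leq \lambda) = \int_{-\lambda}^{\lambda} \frac{1}{\sqrt{2\pi}} e^{-x^2/2}\,dx \leq \frac{2\lambda}{\sqrt{2\pi}} = \lambda\sqrt{2/\pi},
\]
so that $2\,\mathbb{P}(|\xi|\le\lambda) \leq 4\lambda/\sqrt{2\pi} < 4c/\sqrt{2\pi} = 1$, the last equality holding exactly because $c = 2^{-3/2}\sqrt{\pi}$. Thus $2p < 1$ for every $\lambda < c$, which is precisely what the previous step requires. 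Combining the three steps, $\mathcal{E}$ is a countable union of almost surely empty sets, hence almost surely empty, which gives the claim for every $t$ simultaneously.

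I expect the genuine difficulty to lie in the uniformity over all $t \in \R$: for a single fixed $t$ the sequence $(\xi_{j,k_j(t)})_j$ is i.i.d.\ standard Gaussian and its $\limsup$ equals $+\infty$ trivially, so the entire content is in controlling the uncountable family of dyadic branches at once, which is exactly what the first-moment estimate on the binary tree achieves. The one point demanding care is ensuring $2p < 1$ uniformly as $\lambda$ ranges over $(0,c)$, so that the decaying bound is available for the whole countable union; this is guaranteed by the strict inequality $\lambda < c$ together with the density estimate above.
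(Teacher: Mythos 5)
The paper does not reproduce a proof of this lemma; it is quoted directly from \cite{ayache} (Lemma A.27 there). Your argument is correct and self-contained: the reduction to countably many sets $E_{\lambda,J}$, the first-moment bound $2^{J}p(2p)^{L}$ over length-$L$ branches of the dyadic tree (the nodes along a branch lie at distinct scales, so the corresponding $\xi_{j,k}$ are indeed independent), and the density estimate $\mathbb{P}(|\xi|\leq\lambda)\leq \lambda\sqrt{2/\pi}$ fit together exactly, and the fact that $2\lambda\sqrt{2/\pi}<1$ precisely when $\lambda<2^{-3/2}\sqrt{\pi}$ shows this is the computation that produces the stated constant, i.e.\ essentially the argument behind the cited result.
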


Before stating the next lemma, let us recall that if $\xi \sim \mathcal{N}(0,1)$, then one has
\begin{equation} \label{eq:ajout}
\lim_{x \to + \infty} \frac{\PP \big( |\xi|> x\big)}{(2\pi^{-1})^{1/2}x^{-1} e^{-x^2 /2}} =  1 \, . 
\end{equation}
The  following result follows the lines of \cite{ayacheesserkleyntssens}.

\begin{Lemma}\label{lem:lowerbound2}
Almost surely, for almost every $t \in \R$, one has
\begin{equation}\label{eq:lowerbound2}
\limsup_{j \to + \infty }  \dfrac{|\xi_{j, k_{j}(t)}|
}{\sqrt{\log j}} >0 \, .
\end{equation}
\end{Lemma}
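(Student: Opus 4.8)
The plan is to exploit a simple but crucial independence structure: for a \emph{fixed} $t \in \R$, the sequence $(\xi_{j,k_j(t)})_{j \in \N}$ is a family of i.i.d.\ $\mathcal{N}(0,1)$ random variables. Indeed, as $j$ varies the index pairs $(j,k_j(t))$ have pairwise distinct first coordinates, hence they are distinct indices and select distinct members of the i.i.d.\ family $(\xi_{j,k})_{(j,k) \in \N \times \Z}$. So I would first prove \eqref{eq:lowerbound2} as an almost-sure statement for each fixed $t$, and then upgrade it to an almost-everywhere statement in $t$ by Fubini's theorem, exactly as the reduction ``almost surely, for almost every $t$'' suggests.

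For the fixed-$t$ step I would fix a constant $0 < c \leq \sqrt{2}$, say $c=1$, and use the Gaussian tail asymptotic \eqref{eq:ajout} to produce $C>0$ with
\[ \PP\big( |\xi_{j,k_j(t)}| \geq c\sqrt{\log j}\big) \geq \frac{C}{\sqrt{\log j}}\, j^{-c^2/2} \]
for all large $j$. Since $c^2/2 \leq 1$, the series $\sum_j (\log j)^{-1/2} j^{-c^2/2}$ diverges, so $\sum_j \PP(|\xi_{j,k_j(t)}| \geq c\sqrt{\log j}) = +\infty$. Because the events $\{|\xi_{j,k_j(t)}| \geq c\sqrt{\log j}\}$ are independent, the second Borel--Cantelli lemma gives that $|\xi_{j,k_j(t)}| \geq c\sqrt{\log j}$ holds for infinitely many $j$, almost surely; hence $\limsup_{j} |\xi_{j,k_j(t)}|/\sqrt{\log j} \geq c > 0$ almost surely. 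Equivalently,
\[ \PP\Big( \limsup_{j \to +\infty} \frac{|\xi_{j,k_j(t)}|}{\sqrt{\log j}} = 0 \Big) = 0 \quad \text{for every fixed } t \in \R . \]

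To transfer this, I would introduce $E = \{(t,\omega) : \limsup_j |\xi_{j,k_j(t)}(\omega)|/\sqrt{\log j} = 0\}$. Since each $t \mapsto k_j(t)$ is piecewise constant (hence measurable) and each $\xi_{j,k}$ is measurable, $E$ is jointly measurable. Restricting to a bounded interval $[-M,M]$ and applying Fubini--Tonelli, the fixed-$t$ step gives $\int_{-M}^{M} \PP(E^t)\, dt = 0$, so $(\leb \times \PP)\big(E \cap ([-M,M]\times\Omega)\big) = 0$; thus for $\PP$-almost every $\omega$ the slice $\{t \in [-M,M] : \limsup_j |\xi_{j,k_j(t)}(\omega)|/\sqrt{\log j} = 0\}$ is $\leb$-null, and a union over $M \in \N$ yields the claim. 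The genuinely delicate points are the joint measurability of $E$ (routine, thanks to the step structure of $k_j$) and the careful use of the tail asymptotic \eqref{eq:ajout} to guarantee divergence of the series; everything else is driven by the observation that $(\xi_{j,k_j(t)})_j$ is i.i.d., which is precisely what makes the $\sqrt{\log j}$ normalization the sharp one here.
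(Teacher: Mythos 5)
Your proof is correct, and it reaches the conclusion by a slightly different (and more direct) use of the same probabilistic engine. Both you and the paper reduce to a fixed $t$ via Fubini and then exploit the key observation that $(\xi_{j,k_{j}(t)})_{j}$ is an independent standard Gaussian sequence, together with the tail estimate \eqref{eq:ajout} and the second Borel--Cantelli lemma. The difference is in which events are fed to Borel--Cantelli: you use the individual events $\{|\xi_{j,k_{j}(t)}|\geq c\sqrt{\log j}\}$, whose probabilities are of order $(\log j)^{-1/2}j^{-c^{2}/2}$ and hence have divergent sum precisely for $c\leq\sqrt{2}$ (so taking $c$ close to $\sqrt 2$ even gives the sharper bound $\limsup_{j}|\xi_{j,k_{j}(t)}|/\sqrt{\log j}\geq\sqrt 2$, compared with the constant $1$ obtained in the paper). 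The paper instead groups the scales into dyadic blocks $2^{m}\leq j<2^{m+1}$ and applies Borel--Cantelli to the block events $A_{m}(t)$ in which the maximum over the block exceeds $\sqrt{m\log 2}$; these probabilities tend to $1$ exponentially fast, so divergence of $\sum_{m}\PP(A_{m}(t))$ is immediate, and the threshold $\sqrt{m\log 2}\geq\sqrt{\log j-\log 2}$ converts the block statement back into \eqref{eq:lowerbound2}. The block decomposition buys nothing essential at the $\sqrt{\log j}$ scale, so your scale-by-scale argument is a legitimate and arguably cleaner alternative; your explicit verification of joint measurability before invoking Fubini--Tonelli is also more careful than the paper's one-line appeal to Fubini.
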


\begin{proof}
By Fubini theorem, it is enough to prove that for every
$t \in \R$, \eqref{eq:lowerbound2} holds almost surely. Let us fix $t
\in \R$. For every $m \in \N$, we consider the event
$$
A_{m} (t)= \left\{ \max_{2^{m} \leq j < 2^{m+1}} |\xi_{2^{j},
 k_{j}(t)}| \geq
  \sqrt{m\log 2}\right\}.
$$
Using the independance of the random
variables  $\xi_{j,k}$, $(j,k) \in \N \times\Z$, we have
\begin{eqnarray*}
  \PP\left( A_{m}(t)\right)&=& 1 -  \prod_{2^{m} \leq j < 2^{m+1}} \PP  \left( |\xi_{2^{j},
  k_{j}(t)}| < \sqrt{m\log 2} \right)
  \\
  &=&1 -  \Big(1 - \PP  \big( |\xi| > \sqrt{m\log 2}\,  \big) \Big)^{2^{m}}
  \end{eqnarray*}
where $\xi \sim \mathcal{N}(0,1)$. Using \eqref{eq:ajout} and the fact
that $\log(1-x) \leq -x$ if $x \in (0,1)$, we obtain  for $m$
large enough
\begin{eqnarray*}
  \PP\left(A_{m} (t)\right)
  \geq  1 - \Big(1 - C \dfrac{2^{-\frac{m}{2}}}{\sqrt{m\log 2}} \Big)^{2^{m}}
  & \geq & 1 - \exp\left(- C  \dfrac{2^{\frac{m}{2}}}{\sqrt{m\log 2}}\right)
  \end{eqnarray*}
where $C= \frac{1}{2}\sqrt{\frac{2}{\pi}}$. Hence, it follows that 
\[
\sum_{m=0}^{+ \infty} \PP\left( A_{m}(t)\right)= + \infty \, 
\] 
and since that the events $A_{m}(t)$, $m  \in \N$, are  independents,
the Borel-Cantelli lemma implies that 
\begin{equation*}
\PP \left( \bigcap_{M \in \N} \bigcup_{m \geq M}A_{m}(t) \right) = 1 \, .
\end{equation*}
It gives that  almost surely, for infinitely many  $m \in \N$, there is
$j \in \{ 2^{m}, \dots, 2^{m+1}-1\}$ such that
$$
|\xi_{2^{j}, k_{j}(t)}| \geq \sqrt{m\log 2}\geq\sqrt{\log
j - \log 2}.
$$
The conclusions follows.
\end{proof}

The last lemma we  need is proved in \cite{ayacheesserkleyntssens}. Note that in this
paper, the authors works in a more general context. Indeed, it is only
required that the sequence of standard Gaussian random variables
satisfy the following condition: there is $N \in \N$ such that for
every $(j_{1},k_{1}), (j_{2},k_{2}) \in \N \times \Z$ satisfying
$$
\left(\frac{k_{1}-N}{2^{j_{1}}}, \frac{k_{1}+N}{2^{j_{1}}}\right) \cap
\left(\frac{k_{2}-N}{2^{j_{2}}}, \frac{k_{2}+N}{2^{j_{2}}}\right) =
\emptyset \, ,
$$
the 
random variables $\xi_{j_{1},k_{1}} $ and $\xi_{j_{2},k_{2}}$ are
independant.

\begin{Lemma}\cite{ayacheesserkleyntssens}\label{lem:lowerbound3}
Almost surely, for every non-empty open interval $I$ of $\R$, there is $t\in I$ such that 
\begin{equation*}
\limsup_{j \to + \infty }\left\{ \frac{ |\xi_{j,k_{j}(t)}| }{ \sqrt{j}} \right\} >0 \, .
\end{equation*}
\end{Lemma}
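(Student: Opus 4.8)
The plan is to prove Lemma \ref{lem:lowerbound3} by exhibiting, inside any given open interval $I$, a point $t$ along which the coefficients $\xi_{j,k_j(t)}$ grow at least like $\sqrt{j}$ infinitely often. The natural strategy mirrors the construction of slow points in Proposition \ref{kahaneas}: one builds a decreasing sequence of compact sets by a cascade/percolation argument, but here the goal is reversed, we want to \emph{keep} dyadic intervals whose coefficient is \emph{large} rather than discard those whose coefficients are large. First I would fix a dyadic subinterval $\lambda_{J_0,k_0} \subset I$ and work inside it; by the translation/dilation reductions already invoked for Theorem \ref{thm:main}, it suffices to treat a single such interval. The key probabilistic input is that for a standard Gaussian $\xi$, the tail estimate \eqref{eq:ajout} gives $\PP(|\xi| > \sqrt{cj}) \asymp j^{-c/2}/\sqrt{j}$ up to logarithmic factors, so at generation $j$ the probability that a prescribed coefficient exceeds $\sqrt{cj}$ decays only polynomially in $2^j$ when $c$ is small.

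The core of the argument is a branching construction. At each dyadic level $j$, I would say a retained interval at level $j$ \emph{survives} to level $j+1$ if at least one of its two dyadic children carries a coefficient $|\xi_{j+1,k}| \geq \sqrt{c(j+1)}$ for a suitably small constant $c>0$. Using the independence condition stated just before the lemma (coefficients attached to dyadic intervals with disjoint $N$-neighborhoods are independent), the survival events at a fixed level are independent across sufficiently separated intervals, and one obtains a Galton--Watson-type lower bound on the number of surviving intervals analogous to the inequality \eqref{induction}. Because the per-level survival probability tends to $1$ (the threshold $\sqrt{cj}$ is far below the typical maximum of $\sim 2^j$ coefficients once $c$ is small), the expected number of survivors grows geometrically, and a second-moment or Borel--Cantelli argument along the lines used in Lemma \ref{lem:lowerbound2} shows the limiting set $\bigcap_j F_j$ is nonempty with probability bounded below by a constant independent of the starting scale.

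To upgrade from ``positive probability'' to ``almost surely for every open interval'' I would use the usual zero-one / density trick: partition $I$ into finitely many dyadic pieces at a deep level $J_1$, apply the construction in each piece independently, and let $J_1 \to \infty$ so that the probability of finding a good $t$ in $I$ tends to $1$; a countable intersection over a basis of open intervals (the dyadic intervals) then yields the almost sure statement for all open $I$ simultaneously. A point $t$ in the surviving limit set has, by construction, $|\xi_{j,k_j(t)}| \geq \sqrt{cj}$ for infinitely many $j$, which gives $\limsup_j |\xi_{j,k_j(t)}|/\sqrt{j} \geq \sqrt{c} > 0$ as required.

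The main obstacle I anticipate is controlling the dependence structure in the branching step: unlike the slow-point construction, where one discards intervals and the survival of a child is a ``decrease'' event easily bounded by Tchebycheff, here survival of different intervals at the same level can be correlated through the finite-range dependence, and one must ensure enough spatial separation between retained intervals so that the relevant coefficient events are genuinely independent. Handling this typically forces one to thin the retained family (keeping only every $N$-th survivor, or passing to a sparser sublattice of levels) so that the independence hypothesis applies, while still preserving geometric growth of the survivor count. Verifying that this thinning does not kill the branching — i.e. that the effective offspring mean stays strictly above $1$ after imposing separation — is the delicate quantitative point, and it is precisely where the smallness of $c$ and the polynomial (rather than exponential) decay of the Gaussian threshold probability must be balanced. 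I would expect the cleanest route to borrow the corresponding estimates from \cite{ayacheesserkleyntssens} directly, since the lemma is attributed there and the general finite-range-dependence framework is already set up in that reference.
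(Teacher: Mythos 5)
First, note that the paper does not actually prove Lemma \ref{lem:lowerbound3}: the statement is imported from \cite{ayacheesserkleyntssens}, with only a remark about the finite-range independence hypothesis required there. Your sketch therefore has to stand on its own, and as written it contains a genuine gap in the branching step.

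You declare that a retained interval at level $j$ survives if at least one of its \emph{two} dyadic children carries $|\xi_{j+1,k}|\geq \sqrt{c(j+1)}$, and you assert that the per-level survival probability tends to $1$. It does not: for a standard Gaussian, $\PP\big(|\xi|>\sqrt{cj}\big)\sim \sqrt{2/\pi}\,e^{-cj/2}/\sqrt{cj}$ (your intermediate formula $j^{-c/2}/\sqrt{j}$ would correspond to a threshold $\sqrt{c\log j}$, not $\sqrt{cj}$), so the probability that one of two \emph{prescribed} children exceeds the threshold is at most $2e^{-c(j+1)/2}$, which tends to $0$ exponentially fast. The comparison you invoke --- that $\sqrt{cj}$ sits far below the maximum of the $2^j$ coefficients at level $j$ --- concerns \emph{some} coefficient at that level, not the two specific coefficients lying below the interval you have retained. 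A level-by-level binary cascade with offspring mean of order $e^{-cj/2}$ dies out almost surely, so the limit set of your construction is empty. The standard repair (and the one underlying \cite{ayacheesserkleyntssens}, in the spirit of Orey--Taylor) is to select only along a sparse sequence of levels $j_n$: a retained interval at level $j_n$ has $2^{j_{n+1}-j_n}$ descendants at level $j_{n+1}$, and one asks that at least one of \emph{these} carry a coefficient at least $\sqrt{c\,j_{n+1}}$; this succeeds with probability close to $1$ provided $2^{j_{n+1}-j_n}\,e^{-cj_{n+1}/2}\to\infty$, which forces $c<2\log 2$ and a sufficiently fast-growing sequence $(j_n)$, e.g. $j_{n+1}\geq \kappa j_n$ with $\kappa>\big(1-c/(2\log 2)\big)^{-1}$. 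You do mention passing to a sparser sublattice of levels, but only as a device to restore independence; it is in fact indispensable merely to keep the offspring mean above $1$. The remaining ingredients of your plan (finite-range independence, reduction to dyadic intervals, pushing the starting scale to infinity to upgrade positive probability to probability one, and intersecting over a countable basis) are reasonable in outline once the branching step is corrected.
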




\section{Proof of the main result} \label{sec:proof}

Putting  together  Propositions \ref{prop:rapid},
\ref{prop:ordinary}, \ref{prop:slow} and Lemmata
\ref{lem:lowerbound1}, \ref{lem:lowerbound2} and \ref{lem:lowerbound3}, we can summarize the 
results obtained in the previous sections as follows.

\begin{Cor}\label{cor:main} Almost surely, 
\begin{itemize}
\item there exists $t \in (0,1)$ such that
$$
\limsup_{s \to t} \dfrac{|f_h(s)-f_h(t) |}{\omega^{(h)}_{r}(|t-s|)}< + \infty 
\quad \text{ and }\quad
\limsup_{j \to + \infty} \dfrac{|c_{j, \lfloor2^{j}t\rfloor}|}{\omega_{r}^{(h)}(2^{-j})}>0 ,
$$
\item for almost every $t\in (0,1)$, 
$$
\limsup_{s \to t} \dfrac{|f_h(s)-f_h(t) |}{\omega^{(h)}_{o}(|t-s|)}< + \infty 
\quad \text{ and }\quad
\limsup_{j \to + \infty} \dfrac{|c_{j, \lfloor2^{j}t\rfloor}|}{\omega_{o}^{(h)}(2^{-j})} >0,
$$
\item there exists $t \in (0,1)$  such that
$$
\limsup_{s \to t} \dfrac{|f_h(s)-f_h(t) |}{\omega^{(h)}_{s}(|t-s|)}< + \infty 
\quad \text{ and }\quad
\limsup_{j \to + \infty} \dfrac{|c_{j, \lfloor2^{j}t\rfloor}|}{\omega_{s}^{(h)}(2^{-j})}>0 .
$$
\end{itemize}
\end{Cor}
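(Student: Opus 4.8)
The statement is a bookkeeping corollary: each bullet asserts the conjunction of an upper bound on the increments of $f_h$ (already established in Section \ref{sec:regularity}) with a lower bound on the wavelet coefficients along the dyadic address of $t$ (already established in Section \ref{sec:wavcoef}). So the plan is to assemble the two halves, taking care that the quantifiers on $t$ match and that the events of probability one are intersected.

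First I would record that, by uniqueness of the wavelet expansion \eqref{eq:RWS} tested against the dual wavelet $\widetilde{\psi}$, the wavelet coefficients of $f_h$ are exactly $c_{j,k} = 2^{-hj}\xi_{j,k}$; since $k_j(t) = \lfloor 2^j t\rfloor$, this yields $|c_{j,\lfloor 2^j t\rfloor}| = 2^{-hj}|\xi_{j,k_j(t)}|$. Next I would evaluate the three moduli at $2^{-j}$, namely
\begin{align*}
\omega_r^{(h)}(2^{-j}) = 2^{-hj}\sqrt{j\log 2}, \quad \omega_o^{(h)}(2^{-j}) = 2^{-hj}\sqrt{\log(j\log 2)}, \quad \omega_s^{(h)}(2^{-j}) = 2^{-hj},
\end{align*}
so that the three ratios appearing in the statement reduce to $|\xi_{j,k_j(t)}|/\sqrt{j\log 2}$, $|\xi_{j,k_j(t)}|/\sqrt{\log(j\log 2)}$ and $|\xi_{j,k_j(t)}|$ respectively. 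Because $\sqrt{\log(j\log 2)} \sim \sqrt{\log j}$ as $j\to+\infty$, the positivity of these three $\limsup$s is precisely what Lemmata \ref{lem:lowerbound3}, \ref{lem:lowerbound2} and \ref{lem:lowerbound1} provide.

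It then remains to intersect, bullet by bullet, the probability-one event carrying the regularity estimate with the one carrying the coefficient estimate, checking the quantifiers. For the rapid points, Proposition \ref{prop:rapid} gives the finite $\limsup$ against $\omega_r^{(h)}$ for \emph{every} $t\in(0,1)$, while Lemma \ref{lem:lowerbound3} produces \emph{some} $t\in(0,1)$ with the positive coefficient $\limsup$; since the upper bound is universal in $t$, that same $t$ witnesses the conjunction. Symmetrically, for the slow points Proposition \ref{prop:slow} yields \emph{some} $t$ with the finite $\limsup$ against $\omega_s^{(h)}$, and Lemma \ref{lem:lowerbound1} supplies the coefficient lower bound for \emph{every} $t$, so the point furnished by the proposition already satisfies both. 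For the ordinary points, both halves hold for almost every $t$ (Proposition \ref{prop:ordinary} and Lemma \ref{lem:lowerbound2}), and the intersection of two full-measure subsets of $(0,1)$ is again of full measure.

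The only real care needed — and the step I would treat as the main, if modest, obstacle — is this matching of quantifiers: one must invoke the \emph{uniform-in-$t$} nature of one ingredient in each of the ``there exists'' cases, and use that a finite intersection of almost-sure events is again almost sure, so that all three bullets hold simultaneously on a single event of probability one.
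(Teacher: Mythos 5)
Your proposal is correct and matches the paper's (implicit) argument: the corollary is stated there without a separate proof, precisely as the assembly of Propositions \ref{prop:rapid}, \ref{prop:ordinary}, \ref{prop:slow} with Lemmata \ref{lem:lowerbound3}, \ref{lem:lowerbound2}, \ref{lem:lowerbound1} via the identification $c_{j,\lfloor 2^jt\rfloor}=2^{-hj}\xi_{j,k_j(t)}$ and the evaluation of the moduli at $2^{-j}$. Your attention to the quantifier matching and to intersecting the finitely many almost-sure events is exactly the bookkeeping the paper leaves to the reader.
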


We are now ready to prove our main result.

\begin{proof}[Proof of Theorem \ref{thm:main}]
Using Corollary \ref{cor:main}, it suffices to prove the second part of
the result. 
Let us consider $t \in (0,1)$ for which there is $v \in \{r,o,s \}$
such that 
\begin{equation}\label{eq:proofmain}
\limsup_{s \to t} \dfrac{|f_h(s)-f_h(t) |}{\omega_{v}^{(h)}(|t-s|)}< + \infty 
\quad \text{ and }\quad
\limsup_{j \to + \infty} \dfrac{|c_{j, \lfloor2^{j}t\rfloor}|}{\omega_{v}^{(h)}(2^{-j})} >0.
\end{equation}
Let us fix a modulus of continuity $\omega$ such
that $\omega=o(\omega_{v}^{(h)})$.  Assume by contradiction that
\begin{equation}\label{eq:proofmain2}
 \limsup_{s \to t} \dfrac{|f_h(s)-f_h(t) |}{\omega(|s-t|)}< + \infty .
\end{equation}
 Given an arbitrary fixed $j
\in \N$, we set $k= \lfloor 2^{j}t\rfloor$. The first vanishing moment of the
wavelet allows to write 
\begin{eqnarray}\label{eq:1}
|c_{j,k}| 
& = & \left| 2^{j}\int_{\R} \psi(2^{j}x-k) 
\big(f_h(x) - f_h(t)\big) dx\right|\nonumber\\
& \leq & 2^{j}\int_{B(t,2^{-j})} |\psi(2^{j}x-k) |
\big|f_h(x) - f_h(t)\big| dx \nonumber\\
&&+ 2^{j} \sum_{l=0}^{j-1} \int_{B_{l}} |\psi(2^{j}x-k) |
\big|f_h(x) - f_h(t)\big| dx \nonumber\\
&& +2^{j} \int_{\R \setminus B(t,1)} |\psi(2^{j}x-k) |
\big|f_h(x) - f_h(t)\big| dx
\end{eqnarray}
where for every $l \in \{0, \dots, j-1\}$, $B_{l}$ denotes the set
$B(t,2^{-l})\setminus B(t,2^{-l-1})$ .
The first term can be controlled by using the modulus of continuity
$\omega$ in the following way
\begin{eqnarray}\label{eq:2}
2^{j}\int_{B(t,2^{-j})} |\psi(2^{j}x-k) |
\big|f_h(x) - f_h(t)\big| dx
& \leq & C \omega(2^{-j})
\end{eqnarray}
for some positive constant $C$ by assumption \eqref{eq:proofmain2}. 
In order to deal with the second term, we use in addition the fast decay of the
wavelet to get the existence of a natural number $N$ and a constant $C$ such that
\begin{eqnarray*}
2^{j} \sum_{l=0}^{j-1} \int_{B_{l}} |\psi(2^{j}x-k) |
\big|f_h(x) - f_h(t)\big| dx
& \leq & C \sum_{l=0}^{j-1} \omega(2^{-l}) \int_{B_{l}} \frac{2^{j}}{(1+
         |2^{j}x-k|)^{2N}} dx.
\end{eqnarray*}
Notice then that if $x \in B_{l}$, then 
$$
1+|2^{j}x-k| \geq 1+ 2^{j}|x-t| - |2^{j}t-k|\geq 2^{j-l-1}.
$$
It implies that
\begin{eqnarray}\label{eq:3}
2^{j} \sum_{l=0}^{j-1} \int_{B_{l}} |\psi(2^{j}x-k) |
\big|f_h(x) - f_h(t)\big| dx
& \leq & C   \sum_{l=0}^{j-1}\omega(2^{-l})2^{-N(j-l-1)} \int_{B_{l}}
         \frac{2^{j}}{(1+|2^{j}x-k|)^{N}} dx \nonumber\\
& \leq & C   \sum_{l=0}^{j-1}\omega(2^{-l})2^{-N(j-l-1)} \int_{\R}
         \frac{1}{(1+|y|)^{N}} dy \nonumber\\
& = & C'  2^{-Nj} \sum_{l=0}^{j-1}\omega(2^{-l}) 2^{Nl} \nonumber\\
& \leq & C'' \omega(2^{-j})
\end{eqnarray}
for some constants $C',C''$ and using \eqref{modreg}. It remains to bound the last term. Again, the fast decay of
the wavelet together with the boundedness of the random wavelets
series 
$f_{h}$  leads to 
\begin{eqnarray}\label{eq:4}
2^{j} \int_{\R \setminus B(t,1)} |\psi(2^{j}x-k) |
\big|f_h(x) - f_h(t)\big| dx 
& \leq & 2 \|f_{h}\|_{\infty} \int_{\R \setminus B(t,1)}
         \frac{2^{j}}{(1+|2^{j}x-k|)^{2N}} dx \nonumber\\
& \leq & 2 \|f_{h}\|_{\infty} 2^{-Nj}\int_{\R }
         \frac{2^{j}}{(1+|y|)^{N}} dy\nonumber\\
& \leq & C 2^{-Nj}
\end{eqnarray}
for some  positive constant $C$, since $|2^{j}x-k| \geq 2^{j}$ if
$|t-x|\geq 1$. Putting \eqref{eq:1}, \eqref{eq:2}, \eqref{eq:3} and \eqref{eq:4}
together, we finally obtain
\begin{eqnarray*}
|c_{j,k}|  \leq C \big( \omega(2^{-j})+ 2^{-Nj})
\end{eqnarray*}
for some positive constant $C$, so that
$$
\limsup_{j \to + \infty}\frac{|c_{j,k}|}{\omega(2^{-j})} < + \infty
$$
if $N$ is chosen large enough. This contradicts the second part of \eqref{eq:proofmain} since $\omega
= o (\omega_{v}^{(h)})$. 
\end{proof}

\begin{Rmk}\label{rem:suppcompact}
As mentionned earlier, let us notice that if the wavelet $\psi$ is compactly supported, and
if $t$ is a rapid, ordinary or slow point, then
$\omega_{v}$ gives the exact pointwise behavior of
$f$ at $t$, meaning that
$$
0 < \limsup_{s \to t} \dfrac{|f(s)-f(t) |}{\omega_{v}^{(h)}(|t-s|)}< + \infty 
$$
where $v=r$, $v=o$ or $v=s$ respectively.
Indeed, in this case, one has directly
\begin{eqnarray*}
|c_{j,k}| & \leq &   2^{j}\int_{B(t, R2^{-j})} |\psi(2^{j}x-k) |
\big|f_h(x) - f_h(t)\big| dx\\
& \leq & C \sup_{x:|x-t|< R 2^{-j}}|f_h(x) - f_h(t)|
\end{eqnarray*}
where $R$ can be computed via the support of the wavelet and $C$ is a
positive constant.  
\end{Rmk}

The last Remak applies in particular to the fractional Brownian motion
thanks to the  representation \eqref{eq:mbf}.  Indeed, as $R$ is a
smooth process, it does not modify the pointwise regularity and
irregularity properites. 

\begin{Cor}
Almost surely, the fractional Brownian motion satisfies the following property for every non-empty interval $I$ of $\R$:
\begin{itemize}
\item  almost every $t\in I$ is ordinary:
\[
 0<\limsup_{s \to t} \dfrac{|f_h(s)-f_h(t) |}{|t-s|^{h} \sqrt{\log \log |t-s|^{-1}}}< + \infty, 
\]
\item  there exists $t\in I$ which is fast: 
\[
 0<\limsup_{s \to t} \dfrac{|f_h(s)-f_h(t) |}{|t-s|^{h} \sqrt{ \log |t-s|^{-1}}}< + \infty, 
\]
\item  there exists $t\in I$ which is slow: 
\[
 0<\limsup_{s \to t} \dfrac{|f_h(s)-f_h(t) |}{|t-s|^{h} }< + \infty.
\]
\end{itemize}
\end{Cor}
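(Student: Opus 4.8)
The plan is to derive the Corollary directly from Theorem \ref{thm:main} together with the improvement recorded in Remark \ref{rem:suppcompact}, by exploiting the Meyer--Sellan--Taqqu decomposition \eqref{eq:mbf}. Since the Meyer--Sellan--Taqqu construction uses a Lemari\'e--Meyer or a sufficiently smooth Daubechies wavelet, I would choose the compactly supported Daubechies case, so that the strengthened conclusion of Remark \ref{rem:suppcompact} (namely the \emph{strict} positivity of all three $\limsup$'s, not merely their finiteness) is available. The whole point is that the representation \eqref{eq:mbf} writes $B_h = f_h + R$, where $f_h$ is exactly the Gaussian random wavelet series \eqref{eq:RWS} to which Theorem \ref{thm:main} applies, and $R$ is a smooth (hence harmless) remainder.

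First I would invoke Theorem \ref{thm:main}, refined by Remark \ref{rem:suppcompact}, to obtain almost surely, for every non-empty interval $I$, the full trichotomy for the series $f_h$: almost every $t \in I$ is ordinary, there exists a rapid (fast) point, and there exists a slow point, with in each case
\[
0 < \limsup_{s \to t} \frac{|f_h(s)-f_h(t)|}{\omega_v^{(h)}(|t-s|)} < +\infty
\]
for the corresponding $v \in \{r,o,s\}$. Then I would transfer these statements from $f_h$ to $B_h = f_h + R$. The key observation, already flagged in the text following Remark \ref{rem:suppcompact}, is that $R$ is smooth, so in particular $R$ is $C^1$ and one has $|R(s)-R(t)| \leq C|s-t|$ near $t$. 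Because $h < 1$, the modulus $|s-t|$ is negligible compared with each of $\omega_s^{(h)}(|s-t|) = |s-t|^h$, $\omega_o^{(h)}$ and $\omega_r^{(h)}$ as $s \to t$; that is, $|R(s)-R(t)|/\omega_v^{(h)}(|s-t|) \to 0$. Hence for each of the three exponents,
\[
\limsup_{s \to t} \frac{|B_h(s)-B_h(t)|}{\omega_v^{(h)}(|s-t|)} = \limsup_{s \to t} \frac{|f_h(s)-f_h(t)|}{\omega_v^{(h)}(|s-t|)},
\]
since adding a term that tends to $0$ alters neither the finiteness (upper bound) nor the strict positivity (lower bound) of the $\limsup$. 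This identity immediately upgrades the three statements for $f_h$ into the three statements for $B_h$, which is precisely the content of the Corollary.

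The only genuine point requiring a little care is the claim that the smooth remainder $R$ does not disturb the strict positivity of the lower bounds. For the upper bounds this is routine, as a Lipschitz perturbation only adds an $O(|s-t|) = o(\omega_v^{(h)}(|s-t|))$ contribution. For the lower bounds, the same smallness estimate guarantees that the oscillation of $B_h$ cannot be smaller in order than that of $f_h$: if the $\limsup$ for $f_h$ is a strictly positive constant and the $R$-contribution vanishes in the limit, then by the reverse triangle inequality the $\limsup$ for $B_h$ equals that same positive constant. I would therefore present the argument as the elementary observation that $o(\omega_v^{(h)})$ perturbations preserve both the finiteness and the positivity of these normalized $\limsup$'s, and conclude by quoting \eqref{eq:mbf} to identify the principal term with the wavelet series covered by Theorem \ref{thm:main}. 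I do not anticipate a real obstacle here; the substance of the result lies entirely in Theorem \ref{thm:main} and Remark \ref{rem:suppcompact}, and the Corollary is their transparent specialization to the fractional Brownian motion.
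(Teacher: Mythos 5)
Your proposal is correct and follows essentially the same route as the paper: apply Theorem \ref{thm:main} (with the strict positivity from Remark \ref{rem:suppcompact}) to the wavelet series in the decomposition \eqref{eq:mbf} and observe that the smooth remainder $R$ contributes only an $o(\omega_v^{(h)}(|s-t|))$ perturbation, which alters neither the finiteness nor the positivity of the three normalized $\limsup$'s. The paper states this transfer in a single sentence, so your explicit justification of why the Lipschitz perturbation preserves the lower bounds is a welcome, if minor, elaboration rather than a departure.
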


\section{Extension to the multifractal case}\label{sec:multi}

Let us now consider a multifractal version of the previously
introduced Gaussian wavelets series. This is in the spirit of the
multifractional Brownian motion introduced in \cite{plv95,bjr97} by
replacing the constant Hurt exponent $h$ in the stochastic integral
defining this  process by a continuous function $H(\cdot)$. In this paper, we adopt the strategy of \cite{bbcI00} which takes advantage of wavelets series expansion. It consists in substituting the exponent $h$ at level $(j,k)$ in \eqref{eq:RWS} by $H(k2^{-j})$. In \cite{ayachebertrand}, the authors showed that, under some H\"olderian regularity assumptions for the function $H$, both generalizations are equivalent in the sense that the multifractional Brownian Motion $B_{H(\cdot)}$ and the Gaussian wavelets series associated to $H(\cdot)$ only differs by a smooth process, similarly to \eqref{eq:mbf}.

Here, we just need a weaker regularity condition to obtain a generalized version of Theorem \ref{thm:main}. Namely, we consider a compact set $K \subseteq (0,1)$ and a function $H \, : \, \R \to K$,  for which there exits a constant $C_H>0$ such that 
\begin{equation}\label{eq:regH}
|H(x)-H(y)| \leq \frac{C_H}{|\log |x-y| |} 
\end{equation}
for all $x,y$ with $|x-y| < 1$. Of course, such a function $H$ is necessarily continuous and any Hölder-continuous function satisfies \eqref{eq:regH}.

This function allows to  define the multifractal random wavelet series
\begin{equation}\label{eq:RWSmulti}
 f_H = \sum_{j \in \N} \sum_{k \in \Z} \xi_{j,k} 2^{-H(k2^{-j})j} \psi(2^j \cdot -k ). \end{equation}
Our goal is to show that, the function $t \mapsto H(t)$ maps any $t
\in \R$ to its Hölder exponent and that, even in this generalized
setting, slow, ordinary and rapid points can still be
highlighted. Note that, in \cite{dlvm98}, it is proved that, if $H$ is
the function ``H\"older exponent'' of a continuous function then there
exists a sequence $(P_j)_{j \in \N_{0}}$ of polynomials such that
\begin{equation}\label{eq:hmeyer}
 \left\{
\begin{array}{l}
H(t) = \liminf_{j \to + \infty} P_j(t) \\[2ex] 
\ \| D P_j \|_\infty \leq j  , \quad \forall j \in \N_{0} \\ 
\end{array} 
\right. 
\end{equation}
In the situation of deterministic wavelet series with coefficients
$(2^{-jH(k2^{-j})})_{j,k}$, conditions \eqref{eq:hmeyer} are also
sufficient to recover the irregularity from the function $H$
\cite{Clausel:11}. Because of condition \eqref{eq:regH}, our function $H$ is not as general, but if a function $H$ checks conditions \eqref{eq:hmeyer} and if we assume the existence of a constant $C>0$ such that, for all $t \in \R$ and $j \in \N_{0}$,
\[ |H(t)-P_j(t)| \leq C j^{-1} \]
then, \eqref{eq:regH} is satisfied.

\begin{Thm}\label{thm:main2}
Let $I$ denote any non-empty  interval of $\R$.
Almost surely, the random wavelets series defined in \eqref{eq:RWSmulti} satisfies the following property:
\begin{itemize}
\item  For almost every $t\in I$, 
\begin{equation}
 \limsup_{s \to t} \dfrac{|f_H(s)-f_H(t)
   |}{\omega_{o}^{(H(t))}(|s-t|)}< + \infty 
\end{equation}
and if $\omega$ is a modulus of continuity such that $\omega =
o(\omega_{o}^{(H(t))})$, then
\begin{equation}
 \limsup_{s \to t} \dfrac{|f_H(s)-f_H(t) |}{\omega(|s-t|)}= + \infty ,
\end{equation}
\item There exists $t \in I$  such that
\begin{equation}
 \limsup_{s \to t} \dfrac{|f_H(s)-f_H(t) |}{\omega_{r}^{(H(t))}(|t-s|)}< + \infty ,
\end{equation}
and if $\omega$ is a modulus of continuity such that $\omega =
o(\omega_{r}^{(H(t))})$, then
\begin{equation}
 \limsup_{s \to t} \dfrac{|f_H(s)-f_H(t) |}{\omega(|s-t|)}= + \infty ,
\end{equation}
\item There exists $t \in I$  such that
\begin{equation}
\limsup_{s \to t} \dfrac{|f_H(s)-f_H(t) |}{\omega_{s}^{(H(t))}}< + \infty 
\end{equation}
and if $\omega$ is a modulus of continuity such that $\omega =
o(\omega_{s}^{(H(t))})$, then
\begin{equation}
 \limsup_{s \to t} \dfrac{|f_H(s)-f_H(t) |}{\omega(|s-t|)}= + \infty .
\end{equation}
\end{itemize}
\end{Thm}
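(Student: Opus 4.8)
The plan is to follow the architecture of the proof of Theorem~\ref{thm:main} step by step, the only genuinely new feature being the location-dependent weights $2^{-H(k2^{-j})j}$ in~\eqref{eq:RWSmulti}. As before, after dilation and translation it suffices to work on $\lambda_{0,0}=[0,1)$ and to exhibit, for that interval, an event of probability one carrying the three conclusions; the passage to every non-empty interval then follows from a countable union over dyadic intervals together with Fubini's theorem, exactly as for Theorem~\ref{thm:main}. I set $\underline{h}=\min K>0$ and $\overline{h}=\max K<1$. The elementary observation on which everything rests is that \eqref{eq:regH} forces the weight $2^{-H(\cdot)j}$ to vary by at most a bounded multiplicative factor at scale $2^{-j}$: if $|x-t|\le 2^{-j/2}$ then $|\log|x-t||\ge \tfrac{1}{2}\,j\log 2$ for $j$ large, whence $|H(x)-H(t)|\,j\le 2C_H/\log 2=O(1)$ and therefore $2^{-H(x)j}\asymp 2^{-H(t)j}$. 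I would record this as a preliminary lemma, together with the uniform convergence of $f_H$ on compact sets, which follows from Lemmata~\ref{unibound} and~\ref{boundnormal} and the crude bound $2^{-H(k2^{-j})j}\le 2^{-\underline{h}j}$ exactly as for $f_h$.

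For the three regularity estimates I would re-run the proofs of Propositions~\ref{prop:rapid}, \ref{prop:ordinary} and~\ref{prop:slow}, replacing $h$ by $H(t)$ for the terms clustered around $t$ and by $H(s)$ for those clustered around $s$. In every $k$-sum I split the indices into the block $|k-k_j(t)|\le 2^{j/2}$ and the rest. On the first block the preliminary observation gives $2^{-H(k2^{-j})j}\asymp 2^{-H(t)j}$, so that block reproduces verbatim the constant-exponent computation with $h=H(t)$ and the same logarithmic factors (which come unchanged from Lemma~\ref{boundnormal}), yielding $\omega_{r}^{(H(t))}$, $\omega_{o}^{(H(t))}$ and $\omega_{s}^{(H(t))}$ respectively. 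On the second block I discard the weight by $2^{-H(k2^{-j})j}\le 2^{-\underline{h}j}$ and absorb everything into the fast decay of $\psi$: with a decay exponent $M>2(\overline{h}-\underline{h})+1$ the tail over $|k-k_j(t)|>2^{j/2}$ is $\lesssim 2^{-\underline{h}j}2^{-(M-1)j/2}=o(2^{-H(t)j})$, hence negligible; the far intervals at distance $\ge 1$ are treated by the same high-order decay as in Lemmata~\ref{lemma:rapid}--\ref{lemma:ordinary2}. For the terms near $s$ one argues identically with base point $s$, producing bounds in $2^{-H(s)\nu}$, which are reconciled with the target $|t-s|^{H(t)}\asymp 2^{-H(t)\nu}$ by one last application of the preliminary observation, since $|s-t|\asymp 2^{-\nu}$ yields $2^{-H(s)\nu}\asymp 2^{-H(t)\nu}$. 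For the slow points the iterative construction of Proposition~\ref{kahaneas} is untouched, as it involves only the Gaussian variables $\xi_{j,k}$ and not the weights; I merely fix $m$ with $1/m<\underline{h}$, which guarantees $1/m<H(t)$ uniformly and so preserves the summability step $2^{(1/m-H(t))(j-\nu)}$ used in Proposition~\ref{prop:slow}.

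For the lower bounds I would first transfer Lemmata~\ref{lem:lowerbound1}, \ref{lem:lowerbound2} and~\ref{lem:lowerbound3} to the wavelet coefficients $c_{j,k}=2^{-H(k2^{-j})j}\xi_{j,k}$ of $f_H$. Since $|k_j(t)2^{-j}-t|\le 2^{-j}$, the preliminary observation gives $|c_{j,k_j(t)}|\asymp 2^{-H(t)j}|\xi_{j,k_j(t)}|$, so dividing by $\omega_{v}^{(H(t))}(2^{-j})$ cancels the factor $2^{-H(t)j}$ and leaves precisely $|\xi_{j,k_j(t)}|/\sqrt{j}$ (rapid), $|\xi_{j,k_j(t)}|/\sqrt{\log j}$ (ordinary) and $|\xi_{j,k_j(t)}|$ (slow); the three lemmata then deliver $\limsup_{j}|c_{j,k_j(t)}|/\omega_{v}^{(H(t))}(2^{-j})>0$ at the relevant points. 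With these lower bounds in hand, the contradiction argument in the proof of Theorem~\ref{thm:main} applies word for word: assuming a finite $\limsup$ against a modulus $\omega=o(\omega_{v}^{(H(t))})$, the vanishing first moment, the fast decay of $\psi$, the regularity~\eqref{modreg} of $\omega$ and the boundedness of $f_H$ give $|c_{j,k_j(t)}|\le C(\omega(2^{-j})+2^{-Nj})$ for $N$ large, contradicting the lower bound. Combining this with the regularity estimates yields all six displayed statements, and Corollary~\ref{cor:main} is replaced by its pointwise-$H(t)$ analogue.

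The main obstacle lies entirely in the treatment of the location-dependent weight, and more precisely in the intermediate range of indices: \eqref{eq:regH} provides no useful control once $|k2^{-j}-t|$ is of order one, because the bound $C_H/|\log|k2^{-j}-t||$ blows up as the distance approaches $1$, so one cannot simply substitute $H(k2^{-j})$ by $H(t)$ throughout the sum. The balance I expect to be delicate is the joint choice of the cut-off $2^{j/2}$ and the decay exponent $M$: it must be large enough that the crude estimate $2^{-H(k2^{-j})j}\le 2^{-\underline{h}j}$ on the far block, which overshoots the desired $2^{-H(t)j}$ by the factor $2^{(H(t)-\underline{h})j}$, is nevertheless beaten by the gain $2^{-(M-1)j/2}$. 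Once this single tradeoff is arranged, the logarithmic gains and losses are identical to the monofractal case and the remainder of the argument is routine.
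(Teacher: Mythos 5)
Your proposal is correct and follows essentially the same route as the paper's proof: the same cut-off at $|k-k_j(t)|\le 2^{j/2}$ where \eqref{eq:regH} yields $2^{-H(k2^{-j})j}\asymp 2^{-H(t)j}$, the same absorption of the exponent discrepancy (at most $2^{(\sup K-\inf K)j}$) into the extra polynomial decay of the wavelet on the far block, the same reconciliation of $2^{-H(s)\nu}$ with $2^{-H(t)\nu}$ via $|H(t)-H(s)|\le C\nu^{-1}$, the same choice $1/m<\inf K$ for the Kahane construction, and the same transfer of the coefficient lower bounds followed by the vanishing-moment contradiction argument. No gaps.
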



\begin{proof}
The part concerning the irregularity can easily be adapted from the
constant case $H(t)=h$. It suffices to notice that the three superior
limits appearing in Corollary \ref{cor:main} and that concerns the wavelet
coefficients are still stricly postive, using \eqref{eq:regH}. 

The idea and techniques for the regularity are similar to the ones of Propositions \ref{prop:rapid}, \ref{prop:ordinary} and \ref{prop:slow} except that, in addition to ``control'' the randoms coefficients with the help of the fast decay of the wavelets, we also need to ``control'' the variation of the function $2^{H(\cdot)j}$, with $j \in \N$. Again, we will do so using the fast decay of the wavelets.

Let us list how to modify the proofs of Propositions \ref{prop:rapid}, \ref{prop:ordinary} and \ref{prop:slow} in order to find the expected inequalities

\bigskip

\textbf{Rapid points:}

\bigskip

To bound $|Df_{H,j}(x)|$ in \eqref{eq:demorapid1}, the idea consists to split the sum over $|k| \leq 2^{j+1}$ in two and provides additional information for the sum over $|k| > 2^{j+1}$:

\begin{enumerate}[(i)]
\item If $k$ is such that $|k| < 2^{j+1}$ and $|2^jt-k| \leq 2^{j/2}$ then \linebreak $|H(t)-H(k2^{-j})| \leq 2C_H j^{-1}$ and thus
\begin{align*}
2^{(1-H(k2^{-j}))j} |\xi_{j,k}| |D \psi (2^jx-k)| &\leq 2^{2C_H} 2^{(1-H(t)) j}  |\xi_{j,k}| |D\psi (2^jx-k)| \\
& \leq C 2^{2C_H} 2^{(1-H(t)) j} \frac{\sqrt{\log(3+j+|k|)}}{(1+|2^jx-k|)^4}.
\end{align*}

\item If $k$ is such that $|k| < 2^{j+1}$ and $|2^jt-k| >2^{j/2}$, we have
\[ |2^jx -k | > 2^{j/2}-2 \]
and thus, if $\beta > 2 (\sup K - \inf K)$, we get, using the fast decay of $D \psi$,
\begin{align*}
2^{(1-H(k2^{-j}))j} |\xi_{j,k}| |D \psi (2^jx-k)| &\leq C   \frac{2^{(1-H(t))j} 2^{(\sup K - \inf K)j} |\xi_{j,k}|}{(1+|2^jx-k|)^4 (2+|2^jx-k|)^\beta} \\
& \leq C  2^{(1-H(t))j} \frac{\sqrt{\log(3+j+|k|)}}{(1+|2^jx-k|)^4}.
\end{align*}

\item For $|k| > 2^{j+1}$ of course $|2^jx-k| > 2^j$ and one can use the same trick to get
\[ 2^{(1-H(k2^{-j}))j} |\xi_{j,k}| |D \psi (2^jx-k)| \leq C 2^{2C_H} 2^{(1-H(t)) j} \frac{\sqrt{\log(3+j+|k|)}}{(1+|2^jx-k|)^5}. \]
\end{enumerate}

From this, the bound
\[ | \sum_{j \leq \nu} (f_{H,j}(t)-f_{H,j}(s))|  \leq C C_2 |t-s|^{H(t)} \sqrt{\log|t-s|^{-1}}\]
is obtained.

The same method applied on $|f_j(t)|$ ($j > \nu)$ leads to 
\[ |\sum_{j > \nu} f_{H,j}(t)| \leq C C_2|t-s|^{H(t)} \sqrt{\log|t-s|^{-1}} \]
while, for $|f_{H,j}(s)|$, replacing $t$ by $s$ in the reasoning, we get
\begin{align*}
 |\sum_{j > \nu} f_{H,j}(s)| &\leq C C_2 2^{-H(s) \nu} \sqrt{\nu} \\
 & \leq C C C_2 2^{-H(t) \nu} \sqrt{\nu} 2^{(H(t)-H(s)) \nu} \\
 & \leq C C_2 |t-s|^{H(t)} \sqrt{\log|t-s|^{-1}}\
\end{align*}
as $|H(t)-H(s)| \leq (\nu-1)^{-1}$.

\bigskip

\textbf{Ordinary points}

\bigskip

In this case, to bound $|Df_{H,j}(x)|$ (with $j \leq \nu$), the sum for $k \in \kappa_j^t(j)$ remains untouched as, for all such $k$, we have $|t-k2^{-j}| \leq 2^{-j}(j+1)$ and thus we get $|H(t)-H(2^{-j})| \leq C C_H j^{-1}$.

This time, this is the sum for $k \notin \kappa_j^t(j)$  that we need to split:

\begin{enumerate}[(i)]
\item If $k \in \kappa_j^t(2^{j/2}) \setminus \kappa_j^t(j)$, $|k2^{-j}-t| \leq 2 2^{-j/2}$ and, similarly to the point (i) in the rapid points, we have
\[ 2^{(1-H(k2^{-j}))j} |\xi_{j,k}| |D \psi (2^jx-k)| \leq C 2^{2C_H} 2^{(1-H(t)) j} \frac{\sqrt{\log(3+j+|k-k_j(t)|)}}{(3+|2^jx-k|)(1+|2^jx-k|)^4}. \]

\item If $k \notin \kappa_j^t(2^{j/2})$ then $|2^jx-k| \geq 2^{j/2}-3$ and we proceed just like in the point (ii) for the rapid points to get
\[ 2^{(1-H(k2^{-j}))j} |\xi_{j,k}| |D \psi (2^jx-k)| \leq C 2^{2C_H} 2^{(1-H(t)) j} \frac{\sqrt{\log(3+j+|k-k_j(t)|)}}{(3+|2^jx-k|)(1+|2^jx-k|)^4} \]
again.
\end{enumerate}

To bound $|f_{H,j}(t)|$ (with $j > \nu$) the strategy remains the same while to bound $|f_{H,j}(s)|$, like in the proof of Proposition \ref{prop:rapid} we need to consider separately the case where $j \geq 2^{j-\nu+2}$ and when $j < 2^{j-\nu+2}$.

If $j \geq 2^{j-\nu+2}$, we take into account the three sums:
\begin{enumerate}[(i)]
\item the sum over $k \in \kappa_j^t(2j)$ where $|H(t)-H(2^{-j})| \leq C C_H j^{-1}$ and thus which remains untouched,
\item the sum over $k \in \kappa_j^t(2^{j/2}) \setminus \kappa_j^t(2j)$ where we use $|H(t)-H(2^{-j})| \leq C C_H j^{-1}$ to deal with the exponent,
\item the sum over $k \notin \kappa_j^t(2^{j/2})$ where we deal with the exponent just like in (ii) and (iii) for the rapid points.
\end{enumerate}
In total, we get
\[ |f_{H,j}(s)| \leq C C_t 2^{-j H(t)} \sqrt{\log(j)} \]

Now if $2^{j-\nu+2}<j$ we consider:
\begin{enumerate}[(i)]
\item the sum over $\kappa_j^s(2^{j/2})$ for which we deal similarly to the sum over $\kappa_j^t(2^{j/2}) $ above
\item the sum over $\kappa_j^s(2^{j-\nu +2}) \setminus \kappa_j^s(2^{j/2})$ where the exponent is treated like in (ii) and (iii) for the rapid points.
\item the sum for $k \notin \kappa_j^s(2^{j-\nu +2})$ where, again, we treat the exponent like in (ii) and (iii) for the rapid points.
\end{enumerate}
In total, in this case, we get
\[ |f_{H,j}(s)| \leq C C_t 2^{-j H(s)} \sqrt{j-\nu}. \]
The conclusion follows from the same arguments as for the rapid
points. 
\bigskip

\textbf{Slow points}

\bigskip

First, we note that the natural  $m$ in Kahane procedure must be chosen such that \linebreak $1/m < \inf K$.

To bound $|D f_{H,j}^{[0]}(x) |$ (with $j \leq \nu$), we split the sum \eqref{eq:slow} in two:
\begin{enumerate}[(i)]
\item for $0 \leq l \leq \lfloor j/2m \rfloor$, then for all $k \in
  \Lambda_j^l(t)$, we have $|x-k2^{-j}| \leq C 2^{-j/2}$ and thus one
  uses $|H(x)-H(k2^{-j})| \leq C C_H j^{-1}$ to deal with the exponent.
\item for $\lfloor j/2m \rfloor < l \leq \lceil j/m \rceil$, then for all $k \in \Lambda_j^l(t)$, $|2^jx-k| \geq 2^{j/2}-3$ and we treat the exponent like for the rapid points in (ii).
\end{enumerate}

To bound $|D f_{H,j}^{[n]}(x) |$ with $n \neq 0$, we use $|2^jx-k| \geq C_t |n| 2^j$ to treat the exponent like in the rapid points in (iii).

With this, we obtain
\begin{align*}
|\sum_{j \leq \nu}(f_{H,j}(t)-f_{H,j}(s))| & \leq  C C^* |s-t|^\alpha.
\end{align*}

We can use the same method to have
\[ |\sum_{j > \nu} f_{H,j}(t)| \leq C C_\mu  2^{-H(t) \nu} \]
and, using the same arguments as in the end of Proposition \ref{prop:slow}, with the fact that $1/m < \inf K \leq H(s)$, we have
\[ |\sum_{j > \nu} f_{H,j}(t)| \leq C C_\mu 2^{-H(s) \nu} \leq C C_\mu  2^{-H(t) \nu}  \]
just as in the conclusion for the rapid points.

\end{proof}

\section{Genericity of the non-existence of slow points}\label{sec:genericity}

The aim of this section is to prove that the results obtained in the
previous section are spectific to these Gaussian random wavelet
series. On this purpose, let us define the Fréchet space
\[ C^{\nearrow h}:=\bigcap_{\alpha < h} C^{\alpha}([0,1]),\]
where for every $\alpha \in (0,h)$, $C^{\alpha}([0,1])$ denote the H\"older
space of order $\alpha$. This space and its topology can be
equivalently defined using sequence of wavelet coefficients as
follows: If we consider $\alpha > 0$,  $\alpha \notin  \mathbb{N}$, then for any $f
\in C^{\alpha }([0,1])$, one has 
 \begin{equation}\label{eq:holderwavecaract}
  \sup_{j \in \N} \sup_{k \in \{0, \dots, 2^j-1\}} 2^{\alpha j} |c_{j,k}|
  < + \infty 
\end{equation}
 where $(c_{j,k})_{j \in \N, k \in \{0, \dots,
  2^j-1\}}$ denotes the sequence of wavelet coefficients of $f$ on $[0,1]$. It
allows to identify (algebraically and topologically) 
 the H\"older space $C^{\alpha } ([0,1]
) $ with the  space of complex sequences $  (c_{j,k})_{j \in \N, k
  \in \{0, \dots, 2^j-1\}}$ satisfying   \eqref{eq:holderwavecaract}, 
isee \cite{Meyer:95}. When $\alpha  \in  \mathbb{N}$,  we will also
denote by $C^{\alpha } ([0,1]) $ the space of functions satisfying
the condition \eqref{eq:holderwavecaract}.

We will prove that in $C^{\nearrow h}$, ``most of'' the functions do
not present a slow pointwise behavior as the one exhibited for Gaussian
wavelet series, i.e.  does not belong to the space $C^{h}(t)$ formed
by the
functions $f$ such that
$$
\limsup_{s \to t} \frac{|f(s)-f(t)|}{|s-t|^{h}}< + \infty.
$$
 We will use in this purpose two notions of
genericity: the prevalence and the Baire category points of view. 


The notion of prevalence supplies an extension of the notion of ``almost everywhere'' (for the  Lebesgue measure)  in infinite dimensional spaces. In
a metric infinite dimensional vector space,  no measure  is both
$\sigma$-finite  and translation invariant. However, one can consider
a natural extension of  the notion of ``almost everywhere'' which is
translation invariant, see  \cite{Christensen74,HSY92}. 

\begin{Def}\label{def:prev}
Let $E$ be a complete metric vector space. A Borel set $A\subset E$ is  {Haar-null} if there exists a compactly supported probability measure $\mu$  
such that
\begin{equation}\label{trans}
\forall x\in E,\quad \mu(x+A)=0. 
\end{equation} 
  If this property holds, the measure $\mu$ is said to be {transverse}
  to $A$. A subset of E is called {Haar-null} if it is contained in a Haar-null Borel set. The complement of a Haar-null set is called a {prevalent set}.
\end{Def}



In order to prove that a set is Haar-null in a functional space $E$, one can often use for transverse measure the
law of a stochastic process, see
\cite{ClauselNicolay2010,EsserJaffard} for some applications of this method. 
If 
$\mathcal{P}$ is a property that can be satisfied by points of  $E$,   
one can prove that  
$\mathcal{P}$ holds  only on a Haar-null set by exhibiting  a stochastic process $X$ whose sample paths  lies in a compact subset of $E$
and such that  for all $f \in E$ almost surely the property
$\mathcal{P}$ does not hold for $X +f$.

In our setting, the stochastic process that will be used is a random
wavelet series. The following result  allows to get that the sample
paths of this
series are almost surely in a compact set of $C^{\nearrow h}$. Let us
first describe this subset.

\begin{Lemma}\label{lem:prevalence}
Let $h>0$ and let $(\alpha_{j})_{j \in \N}$ be a non-decreasing sequence of $(0,h)$
with tends to $h$. The subset
$$
K= \left\{ f \in C^{\nearrow h}:
  \max_{k \in \{0, \dots, 2^{j}-1\}}|c_{j,k} |\leq 2^{-\alpha_{j}j}\,\, \forall j \in \N\right\}
$$
is compact in $C^{\nearrow h}$, where $(c_{j,k})_{j \in \N, k \in \{0, \dots,
  2^j-1\}}$ denotes the sequence of wavelet coefficients of $f$. 
\end{Lemma}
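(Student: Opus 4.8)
The plan is to establish compactness through sequential compactness, which is legitimate because $C^{\nearrow h}=\bigcap_{\alpha<h}C^{\alpha}([0,1])$ is a Fr\'echet space and hence metrizable. First I would pin down a convenient description of its topology: fixing an increasing sequence $(\beta_n)_{n\in\N}$ in $(0,h)$ with $\beta_n\to h$, the wavelet characterization \eqref{eq:holderwavecaract} shows that the topology is generated by the countable family of norms
\[
\|f\|_{\beta_n}:=\sup_{j\in\N}\ \sup_{0\leq k<2^{j}}2^{\beta_n j}|c_{j,k}|,
\]
so that $f^{(n)}\to f$ in $C^{\nearrow h}$ is equivalent to $\|f^{(n)}-f\|_{\beta}\to 0$ for every $\beta<h$. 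Thus it suffices to prove that every sequence in $K$ has a subsequence converging, in each of these norms, to a limit lying in $K$.

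Next I would take an arbitrary sequence $(f^{(n)})_n$ in $K$, with wavelet coefficients $(c^{(n)}_{j,k})$ satisfying $|c^{(n)}_{j,k}|\leq 2^{-\alpha_j j}$. Since the index set $\{(j,k):j\in\N,\,0\leq k<2^{j}\}$ is countable and each coordinate sequence $(c^{(n)}_{j,k})_n$ is bounded by $2^{-\alpha_j j}$, a Cantor diagonal extraction produces a subsequence, still written $(f^{(n)})$, along which $c^{(n)}_{j,k}\to c_{j,k}$ for every $(j,k)$; the limits inherit $|c_{j,k}|\leq 2^{-\alpha_j j}$. Let $f$ be the function whose wavelet coefficients are $(c_{j,k})$. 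Because $\alpha_j$ is nondecreasing with $\alpha_j\to h$, for every $\alpha<h$ one has $\sup_{j,k}2^{\alpha j}|c_{j,k}|\leq \sup_j 2^{(\alpha-\alpha_j)j}<+\infty$ (the exponent is eventually negative), so $f\in C^{\alpha}([0,1])$ for all $\alpha<h$, i.e.\ $f\in C^{\nearrow h}$, and the defining bound of $K$ holds, whence $f\in K$.

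It then remains to prove $\|f^{(n)}-f\|_{\beta}\to 0$ for each $\beta<h$, which is the core of the argument. Fix $\beta<h$ and $\varepsilon>0$. Since $\alpha_j$ increases to $h>\beta$, choose $J_0$ with $\alpha_{J_0}>\beta$; then for all $n$, all $k$, and all $j\geq J_0$,
\[
2^{\beta j}\,|c^{(n)}_{j,k}-c_{j,k}|\ \leq\ 2\cdot 2^{(\beta-\alpha_j)j}\ \leq\ 2\cdot 2^{(\beta-\alpha_{J_0})j}.
\]
As $\beta-\alpha_{J_0}<0$, the right-hand side is decreasing in $j$, so one can pick $J\geq J_0$ making $\sup_{j\geq J,\,k}2^{\beta j}|c^{(n)}_{j,k}-c_{j,k}|\leq\varepsilon$ \emph{uniformly in $n$}. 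For the finitely many remaining indices $j<J$, $0\leq k<2^{j}$, the coordinatewise convergence $c^{(n)}_{j,k}\to c_{j,k}$ yields $\sup_{j<J,\,k}2^{\beta j}|c^{(n)}_{j,k}-c_{j,k}|\leq\varepsilon$ for $n$ large. Hence $\|f^{(n)}-f\|_{\beta}\leq 2\varepsilon$ eventually, and since $\beta<h$ was arbitrary, $f^{(n)}\to f$ in $C^{\nearrow h}$. This proves $K$ sequentially compact, hence compact.

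The main obstacle is precisely this last uniform control of the high-frequency tail: mere coordinatewise convergence of the wavelet coefficients is far weaker than convergence in the Fr\'echet topology, and what bridges the gap is the \emph{common} decay rate $2^{-\alpha_j j}$ shared by all elements of $K$ together with the strict inequality $\beta<h=\lim_j\alpha_j$. It is this gap between $\beta$ and the limiting exponent that makes the tail estimate uniform in $n$ and thereby upgrades pointwise convergence of coefficients to convergence in every $C^{\beta}$ norm simultaneously.
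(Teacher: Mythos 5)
Your proof is correct, and it reaches the conclusion by a different standard route than the paper. You argue via sequential compactness: a Cantor diagonal extraction gives a subsequence whose wavelet coefficients converge coordinatewise, you check the limit stays in $K$, and you upgrade coordinatewise convergence to convergence in every $C^{\beta}$ norm using the uniform tail bound $2^{\beta j}|c^{(n)}_{j,k}-c_{j,k}|\leq 2\cdot 2^{(\beta-\alpha_{J_0})j}$. The paper instead observes that $K$ is closed in the Fr\'echet space $C^{\nearrow h}$ and proves total boundedness: for each $\alpha<h$ and $\varepsilon>0$ it picks $J$ with $2^{-\alpha_j j}<\varepsilon 2^{-\alpha j}$ for $j\geq J$, so the tail is uniformly negligible over $K$, and covers the remaining finite block of coefficients $\prod_{j<J}\prod_{k}[-2^{-\alpha_j j},2^{-\alpha_j j}]$ by a finite $\delta$-net, yielding finitely many $\varepsilon$-balls covering $K$. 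Both arguments hinge on exactly the same mechanism --- the common decay rate $2^{-\alpha_j j}=o(2^{-\alpha j})$ for every $\alpha<h$ reduces the problem to finitely many coordinates, where finite-dimensional compactness takes over --- and in a metrizable space either characterization of compactness is legitimate. Your version exhibits the limit point explicitly and verifies membership in $K$ directly, so closedness need not be checked separately; the paper's version avoids any subsequence extraction and produces the covering by balls explicitly. Either is acceptable.
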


\begin{proof}
Clearly, $K$ is closed in $C^{\nearrow h}$. Since this last space is 
a Fréchet space, it
suffices now to prove that $K$ is totally bounded. Let us fix $\varepsilon >0$ and
$\alpha<h$. Then there exists $J \in \N$ such that $2^{-\alpha_{j}j}<
\varepsilon 2^{-\alpha j}$ for every $j \geq J$, which implies that
$$
\sup_{j \geq J, k \in \{0, \dots, 2^{j}-1\}}2^{\alpha j}|c_{j,k}| < \varepsilon
$$
for the sequence of wavelet coefficients $(c_{j,k})_{j \in \N, k \in
  \{0, \dots, 2^{j}-1\}}$ of any $f\in
K $. Moreover, since the product
$$
P= \prod_{j=0}^{J-1}\prod_{k=0}^{2^{j}-1}[-2^{-\alpha_{j,k}j}, 2^{\alpha_{j,k}j}]
$$
where we have set $\alpha_{j,k}=\alpha_{j}$ for every $k \in \{0,
  \dots, 2^{j}-1\}$, 
is compact, one can find a finite number of sequences $c^{1}, \dots, c^{L}$ with support included in $\{(j,k):
j \leq J-1, k \in \{0, \dots, 2^{j}-1 \}\}$ such that
$$
P \subseteq \bigcup_{l=1}^{L}
\prod_{j=0}^{J-1}\prod_{k=0}^{2^{j}-1}\big(c^{l}_{j,k}-\delta, c^{l}_{j,k}+\delta\big),
$$
where $\delta  <2^{-\alpha J} \varepsilon$. For every $l$, we define $f^{l} = \sum_{j
  =0}^{J-1}\sum_{k=0}^{2^{j}-1}c^{l}_{j,k} \psi(2^{j}\cdot -k)$. In order to conclude, let
us show that
$$
K \subseteq \bigcup_{l=1}^{L} \left\{ f \in C^{\nearrow h}: 
\|f -f^{l}\|_{\alpha}< \varepsilon
\right\}. 
$$
If $f \in K$, its truncated sequence $(c_{j,k})_{j \leq J-1, k \in
  \{0, \dots, 2^{j}-1 \}}$ of wavelet coefficients belongs to $P$. Hence, for a $l \in
\{1, \dots, L\}$, one has
$$
 2^{\alpha j} |c_{j,k}-c^{l}_{j,k}| \leq \delta 2^{\alpha j} < \varepsilon
$$
if $j \leq J-1$, and
$$
 2^{\alpha j} |c_{j,k}-c^{l}_{j,k}| =  2^{\alpha j} |c_{j,k}|< \varepsilon
$$ 
if $j \geq J$. 
\end{proof}

Before stating our result, we need to recall the following wavelet
``almost characterization'' of the pointwise H\"older regularity. It relies  on alternative quantities, namely the wavelet
leaders. In order to define them, we need to introduce the notation  $c_{\lambda}$
to denote the wavelet coefficient $c_{j,k}$, where $\lambda$ is the dyadic interval 
 \[
 \lambda = \lambda(j,k) = \left[ \frac{k}{2^j}, \frac{k+1}{2^j}\right)
 \]
Then, if  $3 \lambda$ denote the interval with the same center as
$\lambda$ but three times larger, the wavelet leader $d_{\lambda}$ is
defined by  
\begin{equation}\label{eq:leader}
d_{\lambda} = \sup_{ \lambda' \subset 3 \lambda} |c_{\lambda'}|.
\end{equation}
Note that this supremum is finite as soon as $f$ is locally
bounded. In \cite{Jaffard:04b}, the author proved that if $f \in
C^{h}(t)$ for some $h>0$, then there exists a constant $C>0$ such that 
\begin{equation}\label{eq:pointwiseleader}
d_{\lambda(j,k_{j}(t)) } \leq C 2^{-\alpha j}.
\end{equation}
This inequality will allow us to construct wavelet series which do not
belong to $C^{h}(t)$ for every $t \in [0,1]$. 


\begin{Prop}
Let $h>0$. The set of functions $f$
such that $f \notin C^{h}(t) $ for every $t \in [0,1]$ is 
prevalent  in $C^{\nearrow h}$.
\end{Prop}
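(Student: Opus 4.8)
The plan is to verify the prevalence criterion recalled after Definition~\ref{def:prev}: it suffices to produce a single compactly supported probability measure $\mu$ on $C^{\nearrow h}$ that is transverse to the set $A=\{g\in C^{\nearrow h}:\exists\, t\in[0,1],\ g\in C^h(t)\}$, meaning $\mu(f+A)=0$ for every $f$. I would take for $\mu$ the law of a random wavelet series whose paths are confined to the compact set of Lemma~\ref{lem:prevalence}. Concretely, fix a nondecreasing sequence $(\alpha_j)_{j\in\N}$ in $(0,h)$ tending to $h$ slowly enough that
\begin{equation}\label{eq:rate-plan}
\lim_{j\to+\infty}\frac{(h-\alpha_j)\,j}{\log j}=+\infty,
\end{equation}
for instance $\alpha_j=h-j^{-1/2}$ for large $j$, and let $K$ be the associated compact set. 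Let $(\eta_{j,k})$ be independent Rademacher variables and put $X=\sum_{j\in\N}\sum_{k=0}^{2^j-1}\eta_{j,k}2^{-\alpha_j j}\psi(2^j\cdot-k)$, so that $|c_{j,k}(X)|=2^{-\alpha_j j}$ and $X\in K$ surely; take $\mu$ to be the law of $X$.

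Fix $f\in C^{\nearrow h}$; transversality reduces to showing that almost surely $X+f\notin C^h(t)$ for every $t\in[0,1]$ (the sign of $f$ is immaterial since $-f\in C^{\nearrow h}$ as well, and $\mu(f+A)=\PP(X-f\in A)$). The first ingredient is a non-cancellation estimate: since $c_{j,k}(X+f)=\eta_{j,k}2^{-\alpha_j j}+c_{j,k}(f)$ and the two possible values $c_{j,k}(f)\pm 2^{-\alpha_j j}$ are $2\cdot 2^{-\alpha_j j}$ apart, at most one sign can force $|c_{j,k}(X+f)|<\tfrac12 2^{-\alpha_j j}$, whence
\begin{equation}\label{eq:noncancel-plan}
\PP\Big(|c_{j,k}(X+f)|\ge \tfrac12 2^{-\alpha_j j}\Big)\ge \tfrac12,
\end{equation}
these events being independent in $(j,k)$. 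The second ingredient is the leader characterization \eqref{eq:pointwiseleader}: if $X+f\in C^h(t)$ then $\limsup_j 2^{hj}d_{\lambda(j,k_j(t))}<+\infty$, and moreover $d_{\lambda(j,k_j(t))}\ge|c_{j',k'}(X+f)|$ for every dyadic $\lambda(j',k')\subset 3\lambda(j,k_j(t))$ with $j'\ge j$. Thus it suffices to prove that almost surely $\limsup_j 2^{hj}d_{\lambda(j,k_j(t))}=+\infty$ for every $t$.

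The main obstacle is the uniformity over the uncountable family of points $t$: for a fixed $t$, \eqref{eq:noncancel-plan} and Borel--Cantelli at once give $\limsup_j 2^{hj}|c_{j,k_j(t)}(X+f)|=+\infty$ almost surely, but a plain Fubini argument would only control Lebesgue-almost every $t$, whereas the statement demands \emph{every} $t$. To upgrade to all $t$ simultaneously I would run a quantitative first Borel--Cantelli lemma in which each dyadic interval is probed by a growing number of independent coefficients. Set $r_j=\lceil\log_2(2j)\rceil$ and $j'=j+r_j$. For each $j$ and each $k_0\in\{0,\dots,2^j-1\}$, the interval $\lambda(j,k_0)$ contains $2^{r_j}\ge 2j$ dyadic subintervals at scale $j'$, whose coefficients are mutually independent; by \eqref{eq:noncancel-plan} the probability that all of them fail the bound $\tfrac12 2^{-\alpha_{j'}j'}$ is at most $2^{-2j}$, so a union bound gives $\PP(\exists\,k_0\ \text{with all probes bad})\le 2^{j}2^{-2j}=2^{-j}$. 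Since $\sum_j 2^{-j}<+\infty$, Borel--Cantelli yields, almost surely, that for all large $j$ and every $k_0$ some $\lambda(j',k')\subset\lambda(j,k_0)\subset 3\lambda(j,k_0)$ satisfies $|c_{j',k'}(X+f)|\ge\tfrac12 2^{-\alpha_{j'}j'}$.

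On this probability-one event, for every $t$ and all large $j$ one has $2^{hj}d_{\lambda(j,k_j(t))}\ge \tfrac12\,2^{\,hj-\alpha_{j'}j'}$, and writing $j'=j+r_j$ the exponent equals $(h-\alpha_{j'})j'-h\,r_j$. Because $r_j=O(\log j)$ while $(h-\alpha_{j'})j'$ outgrows $\log j'$ by \eqref{eq:rate-plan}, this exponent tends to $+\infty$; hence $\limsup_j 2^{hj}d_{\lambda(j,k_j(t))}=+\infty$ for every $t\in[0,1]$, so $X+f\notin C^h(t)$ for all $t$. As the underlying event is measurable and of full probability, $\mu(f+A)=\PP(X-f\in A)=0$ for every $f$ (applying the conclusion to $-f$), so $A$ is Haar-null and its complement is prevalent. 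The delicate balance to keep in mind is precisely the one forcing the slow rate \eqref{eq:rate-plan}: the probes must be numerous enough (so $r_j\to+\infty$) to beat the $2^{j}$ union bound over $k_0$, yet their scale $j'$ must stay close enough to $j$ that their magnitude $2^{-\alpha_{j'}j'}$ still dominates $2^{-hj}$.
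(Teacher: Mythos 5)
Your proof is correct and follows essentially the same strategy as the paper's: a random wavelet series confined to the compact set of Lemma \ref{lem:prevalence} serves as the transverse measure, each dyadic interval is probed by enough independent coefficients at a scale $j+O(\log j)$ to beat the union bound over the $2^{j}$ positions, and the wavelet-leader criterion \eqref{eq:pointwiseleader} converts the resulting lower bound on $d_{\lambda(j,k_j(t))}$ into $X+f\notin C^{h}(t)$ for every $t$. The only substantive difference is cosmetic: you use Rademacher noise together with the observation that at most one of the two shifted values can be small, whereas the paper uses $\mathcal{U}([-1,1])$ noise and bounds the probability that the shifted uniform variable lands in a small interval.
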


\begin{proof}
Let us consider a sequence $(j_{n})_{n \in \N}$ 
 satisfying  $j_{n+1}>j_{n}+
\lfloor \log_{2}j_{n}^{2}\rfloor+1$  and let us
set 
  $\alpha_{n} =h-\frac{1}{\sqrt{j_{n}}}$
for every $n \in \N$. 
Let us define the random wavelets series
$$
f = \sum_{n \in \N}\sum_{j=j_{n}}^{j_{n+1}-1}\sum_{k=0}^{2^{j}-1} 2^{-\alpha_{n} j} \varepsilon_{j,k}\psi_{j,k}
$$
where $(\varepsilon_{j,k})_{j \in \N, k \in \{0, \dots, 2^{j}-1\}}$ is
a sequence of independent $\mathcal{U}([-1,1])$ random variables. Clearly, for every $\alpha < h$, one has
$$
2^{-\alpha_{n}j} \leq  2^{-\alpha j}
$$
if $j \in \{j_{n}, \dots, j_{n+1}-1\}$ for $n$ large enough. Using the
characterization given in 
\eqref{eq:holderwavecaract}, it
follows that $f \in C^{\alpha}([0,1])$. 
Moreover, from Lemma \ref{lem:prevalence}, we know that the process
$f$ takes its values in a compact subset of $C^{\nearrow h}$. 

Let us now show that almost surely $f \notin C^{h}(t)$ for every $t \in
[0,1]$. Let us fix $M \in \N$. For every $n \in \N$, if one consider
the subintervals of scale $j'=j_{n}+
\lfloor \log_{2}j_{n}^{2}\rfloor+1$  in the supremum appearing in the
definition  \eqref{eq:leader} of the 
wavelet leaders, one has 
\begin{eqnarray*}
\PP\left(\inf_{k \in \{0, \dots, 2^{j_{n}}-1\}}d_{j_{n},k}\leq
  M 2^{-h j_{n}}\right)
& \leq & \sum_{k=0}^{2^{j_{n}}-1} \PP\left(\sup_{\lambda' \subseteq \lambda_{j_{n},k}} |2^{-\alpha_{n}j'}\varepsilon_{\lambda'}|\leq
  M 2^{-h  j_{n}}\right)\\
& \leq & 2^{j_{n}} \left( 2^{\alpha_n j'}  M  2^{-h j_{n}}\right)^{2^{j'-j_{n}}}\\
& \leq &  2^{j_{n}+ j_{n}^{2}h}  j_{n}^{2 hj_{n}^{2}}2^{- j_{n}^{5/2}}
         M^{j_{n}^{2}}. \\
\end{eqnarray*}
The
Borel-Cantelli Lemma implies that almost surely, one has 
$$
d_{j_{n},k} > M 2^{-hj_{n}}
$$
for every $n$ large enough and every $k \in \{0, \dots,
2^{j_{n}}-1\}$. Since $M \in \N$ is arbitrary,
\eqref{eq:pointwiseleader} gives the announced
result.
In order to conclude, it suffices to prove that the previous result is still valid if we replace the wavelet
series $f$ by
$$
\tilde{f}= f+ {g}
$$
for a function ${g} \in C^{\nearrow h}$. 
In this case, the wavelet coefficients $ 2^{-\alpha_{n} j}$ of $f$ are
replaced by 
$$
2^{-\alpha_{n} j} \varepsilon_{j,k}+  c_{j,k} = 2^{-\alpha_{n}j}
(\varepsilon_{j,k} + 2^{\alpha_{n}j}c_{j,k}).
$$
It implies that the random variables defining the wavelets series are
still independant but
no more centered since they are shifted by a deterministic
quantity. Clearly, the probabilities
computed before can only become smaller, hence the Borel-Cantelli
lemma still holds. 
\end{proof}

To end the paper, we show that the same result holds true if one replaces
the notion of prevalence by the genericity in the sense supplied by the Baire
category theorem. Let us recall that a subset $A$ of a Baire space $X$
is of first category (or meager) if it is included in a countable
union of closed sets of $X$ with empty interior. The complement of a
set of first category is Baire-residual; it contains a countable
union of dense open sets of $X$. 

\begin{Prop}
Let $h>0$. The set of functions $f$
such that $f \notin C^{h}(t) $ for every $t \in  [0,1]$ is 
Baire-residual  in $C^{\nearrow h}$.
\end{Prop}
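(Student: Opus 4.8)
The plan is to show that the target set is residual by proving that its complement
\[
A=\{f\in C^{\nearrow h}: f\in C^{h}(t)\text{ for some }t\in[0,1]\}
\]
is meager, covering it with countably many closed sets of empty interior built from the wavelet leaders. First I would reduce everything to a leader condition via \eqref{eq:pointwiseleader}: if $f\in C^{h}(t)$, there is an integer $M$ with $d_{\lambda(j,k_{j}(t))}(f)\le M2^{-hj}$ for all $j$, so $A\subseteq\bigcup_{M\in\N}B_{M}$, where
\[
B_{M}=\Big\{f\in C^{\nearrow h}:\exists\, t\in[0,1],\ d_{\lambda(j,k_{j}(t))}(f)\le M2^{-hj}\ \forall j\in\N\Big\}.
\]
It then suffices to prove that each $B_{M}$ is closed with empty interior, since $\bigcup_{M}B_{M}$ is meager and its complement, which contains the set of functions lying in no $C^{h}(t)$, is therefore Baire-residual.

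For the empty interior I would argue that $C^{\nearrow h}\setminus B_{M}$ is dense by a one-scale perturbation. Given $f$ and a neighbourhood $\{g:\|g-f\|_{\alpha}<\varepsilon\}$ with $\alpha<h$, fix a large scale $j_{0}$, choose signs $\epsilon_{k}\in\{-1,1\}$ with $\epsilon_{k}c_{j_{0},k}(f)\ge0$, and define $g$ by keeping all coefficients of $f$ except at scale $j_{0}$, where I set $c_{j_{0},k}(g)=c_{j_{0},k}(f)+2M2^{-hj_{0}}\epsilon_{k}$. Then $|c_{j_{0},k}(g)|\ge 2M2^{-hj_{0}}>M2^{-hj_{0}}$ for every $k$, and since the node $\lambda(j_{0},k)$ occurs in its own supremum \eqref{eq:leader} one has $d_{\lambda(j_{0},k)}(g)\ge|c_{j_{0},k}(g)|>M2^{-hj_{0}}$; as $k_{j_{0}}(t)$ is defined for every $t$, each branch has a bad node at level $j_{0}$, so $g\notin B_{M}$. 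Only scale $j_{0}$ is modified, hence $\|g-f\|_{\alpha}=2M2^{-(h-\alpha)j_{0}}\to0$, and taking $j_{0}$ large places $g$ in the neighbourhood; the same estimate with \eqref{eq:holderwavecaract} shows $g\in C^{\nearrow h}$.

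The closedness of $B_{M}$ is the main obstacle, precisely because of the uncountable union over $t$, and I would handle it through the dyadic tree. Call a node $\lambda(j,k)$ \emph{good} for $f$ if $d_{\lambda(j,k)}(f)\le M2^{-hj}$, and let $T(f)$ be the subtree of the binary tree consisting of those nodes all of whose dyadic ancestors are good. Since a point $t$ exactly corresponds to an infinite branch of nested dyadic intervals, $f\in B_{M}$ is equivalent to $T(f)$ admitting an infinite branch, hence by König's lemma to $T(f)$ being infinite. The key analytic input is that each functional $f\mapsto d_{\lambda}(f)$ is lower semicontinuous on $C^{\nearrow h}$: convergence there forces coefficientwise convergence ($|c_{j,k}^{(n)}-c_{j,k}|\le2^{-\alpha j}\|f_{n}-f\|_{\alpha}$), so $d_{\lambda}$ is a supremum of continuous maps $f\mapsto|c_{\lambda'}(f)|$, whence $\{d_{\lambda}\le c\}$ is closed.

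To finish the closedness I would take $f_{n}\to f$ with $f_{n}\in B_{M}$ and fix a level $J$; each $f_{n}$ carries a good root-to-$J$ path, and since there are only $2^{J}$ such paths one of them, $P$, recurs for infinitely many $n$. Applying lower semicontinuity to the finitely many leaders indexed by the nodes of $P$ shows $d_{\lambda}(f)\le\liminf_{n}d_{\lambda}(f_{n})\le M2^{-h\,\mathrm{level}(\lambda)}$ along $P$, so $P$ is good for $f$ and $T(f)$ has a node at level $J$. As $J$ is arbitrary, $T(f)$ is infinite, so $f\in B_{M}$ and $B_{M}$ is closed. Combining the three steps, $A\subseteq\bigcup_{M}B_{M}$ is meager and the set of $f$ with $f\notin C^{h}(t)$ for every $t$ contains the dense $G_{\delta}$ set $\bigcap_{M}(C^{\nearrow h}\setminus B_{M})$, hence is Baire-residual.
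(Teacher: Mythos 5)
Your argument is correct, and it takes a genuinely different route from the paper. You prove that the complement is meager by covering it with the sets $B_{M}$ built from the wavelet-leader criterion \eqref{eq:pointwiseleader}, and the real work is your closedness step: you reduce the uncountable quantifier over $t\in[0,1]$ to the dyadic tree, note that each $f\mapsto d_{\lambda}(f)$ is lower semicontinuous on $C^{\nearrow h}$ (a supremum of the continuous maps $f\mapsto |c_{\lambda'}(f)|$), and then use the pigeonhole on the $2^{J}$ root-to-level-$J$ paths together with K\"onig's lemma. The paper argues from the other side: it directly constructs dense open sets $U_{J}$ by a quantization trick (forcing $2^{\alpha_{j}j}|c_{j,k}|$ to be a nonzero integer for all $j\geq j_{0}$, hence every coefficient at some scale $j_{0}\geq J$ satisfies $|c_{j_{0},k}|\geq \frac{1}{2}2^{-hj_{0}+\sqrt{j_{0}}}$), and the leader bound \eqref{eq:pointwiseleader} is then contradicted on $\bigcap_{J}U_{J}$, so no closedness issue ever arises. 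Your version is the standard ``complement is an $F_{\sigma}$ of nowhere dense sets'' scheme and makes the topological structure of the exceptional set transparent; the paper's version avoids the tree-compactness step at the cost of a more ad hoc construction of the dense open sets. One small point to tidy up: an infinite good branch $(\lambda_{j})_{j}$ of $T(f)$ determines the point $t=\bigcap_{j}\overline{\lambda_{j}}$, which may be the common right endpoint of the $\lambda_{j}$, in which case $k_{j}(t)\neq k_{j}$ and your asserted equivalence between $f\in B_{M}$ and $T(f)$ being infinite holds only in one direction. This is harmless: your argument really shows that $\tilde{B}_{M}=\{f:T(f)\ \text{infinite}\}$ is closed with empty interior, and the inclusion $B_{M}\subseteq\tilde{B}_{M}$ is all that the meagerness of $\bigcup_{M}B_{M}$ requires.
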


\begin{proof}
Let us consider the non-decreasing sequence
$(\alpha_{j})_{j \in \N}$ of $(0,h)$
with converges to $h$ defined by $\alpha_{j}= h - \frac{1}{\sqrt{j}}$. For every $J \in \N$, the set
$\mathcal{C}_{J}$ is formed by the functions
$ f \in C^{\nearrow h}$ whose sequence of wavelet coefficients
$(c_{j,k})_{j \in \N , k \in \{0, \dots, 2^{j}-1\}}$ satisfies
$$
 2^{\alpha_{j}j}|c_{j,k}| \in \N \setminus\{0\} \quad  \forall j\geq
 J\, ,  \forall k \in \{0, \dots, 2^{j}-1\}.
$$
Finally, we define the open sets $U_{J}$  by
$$
U_{J} =\bigcup_{j \geq J} \left\{ g \in C^{\nearrow h} : \exists f
\in \mathcal{C}_{j} \text{ such that } \|f-g\|_{\alpha_{j}}< \frac{1}{2}\right\}.
$$
Notice that if $g \in U_{J}$ with wavelet coefficients $(e_{j,k})_{j
  \in \N , k \in \{0, \dots, 2^{j}-1\}}$, then there is $f \in
\mathcal{C}_{j_{0}}$ for a  $j_{0} \geq J$,  with wavelet coefficients $(c_{j,k})_{j
  \in \N , k \in \{0, \dots, 2^{j}-1\}}$ such that
$$
|e_{j_{0},k}|\geq  |c_{j_{0},k}| - |e_{j_{0},k}-c_{j_{0},k}| \geq 2^{- \alpha_{j_{0}}j_{0}}
-\frac{1}{2}2^{-\alpha_{j_{0}}j_{0}}=\frac{1}{2}2^{- \alpha_{j_{0}}j_{0}}
$$
for every $k \in \{0, \dots, 2^{j_{0}}-1\}$. Now, assume that  $g$
belongs to the set $\mathcal{R}$ defined by
$$
\mathcal{R} = \bigcap_{J \in \N} U_{J}.
$$
If there exists $t \in [0,1]$ such that $g \in C^{h}(t)$,
\eqref{eq:pointwiseleader} gives the existence of a constant $C>0$
such that
$$
d_{j,k_{j}(t)} \leq C 2^{-hj}.
$$
Since $g \in \mathcal{R}$, one  gets 
$$
\frac{1}{2}2^{- \alpha_{j}j} = \frac{1}{2}2^{- hj + \sqrt{j}} \leq C 2^{-hj}
$$
for infintely many $j$,  which is impossible. Consequently, $g \notin
C^{h}(t)$ for every $t\in [0,1]$.

To conclude, it suffices
to prove that the open sets $U_{J}$ are dense in $C^{\nearrow h} $. Let us fix $J \in \N $, $g \in C^{\nearrow h}$,  $\alpha <h$ and $\varepsilon >0$. Let
$J_{0}\geq J$ be large enough to ensure that both $\alpha < \alpha_{J_{0}}$ and
$2^{(\alpha-\alpha_{J_{0}})J_{0}} < \varepsilon$ are satisfied. We
construct the function $f$ via its sequence of wavelet coefficients  by setting
$$
c_{j,k} = \begin{cases}
e_{j,k} & \text{ if } j < J_{0} \\
2^{-\alpha_{j} j}  [2^{\alpha_{j} j} e_{j,k}]  & \text{ if } j \geq J_{0}
\text{ and }
2^{\alpha_{j} j}| e_{j,k}|\geq 2,\\
2^{-\alpha_{j} j}  & \text{ if } j \geq J_{0}
\text{ and }
2^{\alpha_{j} j}| e_{j,k}|< 2,
\end{cases}
$$
so that $f \in \mathcal{C}_{J_{0}}\subseteq U_{J}$ and $|2^{\alpha_{j}j}c_{j,k} - 2^{\alpha_{j}j}e_{j,k}|\leq 1$ for
every $j \geq J_{0}$. Hence one has
$$
2^{\alpha j}|e_{j,k}-c_{j,k}| \leq 2^{(\alpha-\alpha_{j})j} \leq 2^{(\alpha-\alpha_{J_{0}})J_{0}} < \varepsilon
$$
 for every $j \geq J_{0}$, which allows to conclude. 
\end{proof}

\bibliography{biblio}{}
\bibliographystyle{plain}

\end{document}